\newtheorem{theorem}{Theorem}[section]
\newtheorem{lemma}[theorem]{Lemma}
\newtheorem{proposition}[theorem]{Proposition}
\newtheorem{corollary}[theorem]{Corollary}
\theoremstyle{definition}
\newtheorem{definition}[theorem]{Definition}
\newtheorem{example}[theorem]{Example}
\theoremstyle{remark}
\newtheorem{remark}[theorem]{Remark}
\theoremstyle{definition}
\newtheorem{claim}{Claim}[section]
\def\Z{{\mathbb{Z}}} 
\def\R{{\mathbb{R}}}
\newcommand{\tridot}{\ooalign{$\triangle$\crcr\hss$\cdot$\hss}}
\begin{document}

\title{Faces of maximal chain polytopes}



\author{Shinsuke Odagiri}
\address{Shumei University, 1-1 Daigaku-cho, Yachiyo-shi, Chiba, 276-0003, Japan}
\email{p-odagiri@mailg.shumei-u.ac.jp}




\begin{abstract}
The maximal chain polytope $\mathscr{M}(P)$ is associated with a finite poset $P$.
For a set of maximal chains $\mathcal{C}$, it is shown that the convex hull of all the points corresponding to elements of $\mathcal{C}$ is not a face of $\mathscr{M}(P)$
if and only if $\mathcal{C}$ has an incomplete guided crown structure.
Using this result, several examples, including the dimension of $\mathscr{M}(\bm{m} \times \bm{n})$, are calculated.
\end{abstract}

\maketitle

\section{Introduction}

We first review basic notations of partially ordered sets.

A {\it partially ordered set} (or a {\it poset}) $(P, \le)$ is a set $P$ endowed with an ordering relation $\le$ (by standard abuse of notation, we identify $P$ with the poset).
In this paper, we assume every poset is {\it finite}, i.e. $p:=\# P < \infty$, and set $P=\{1, 2, \dots, p\}$.
Let $Q$ be a subset of $P$. Then $(Q, \le_Q)$ is a poset with $x \le_Q y$ if and only if $x\le y$.
We call $Q$ an {\it induced subposet} of $P$.
Unless indicated otherwise, we regard every subset of $P$ as an induced subposet.
A {\it chain} $C$ is a subset of $P$ such that every two elements are comparable, i.e. either $x<y, x>y$ or $x=y$ holds for every $x, y \in C$.
Moreover, if there exists no larger chain containing $C$, we call $C$ a {\it maximal chain}.
A subset $Q \subset P$ is called an {\it antichain} if every two distinct elements of $Q$ are incomparable.
The set of all the maximal chains (resp. all the maximal antichains) of $P$ is denoted by $\mathrm{MaxChain}(P)$ (resp. $\mathrm{AntiChain}(P)$).

We write $(x, y)$ for the {\it open interval} $\{z \in P \mid x < z < y \}$, and $[x, y]$ for the {\it closed interval} $\{z \in P \mid x \le z \le y \}$.
By introducing the imaginary minimum element $-\infty$ and the imaginary maximum element $\infty$ of $P$,
let us denote the {\it down-set} and the {\it up-set} of $x$ by $(-\infty, x]:=\{z \in P \mid z \le x\}$ and $[x, \infty):=\{z \in P \mid x \le z \}$, respectively.\footnote{The minimum and the maximum element of a poset (if exists) is usually written as $\hat{0}$ and $\hat{1}$.
We wrote as $-\infty$ and $\infty$ to emphasise that they are not elements of the poset.}
Other intervals, such as the half-open interval, are analogously defined.

The {\it covering relation}, denoted by $\lessdot$, is defined by $x \lessdot y$ if $x<y$ and $x \le z < y$ imply $x=z$.
We then say that $x $ is {\it covered by} $y$ or $y$ {\it covers} $x$.
If $x$ is covered by $y$ as elements of a subset $Q \subset P$, we write $x \lessdot_Q y$, 

Now, we will associate polytopes with posets.
To each subset $Q \subset P$, we define $e_Q=\sum_{x \in Q} e_x \in \R^p$, where $e_x$ is the canonical unit coordinate vector of $\R^p$.
Let $\mathcal{Q}=\{Q_1, \dots, Q_k\}$ be a set of subsets of $P$.
We write $\mathrm{conv}(\mathcal{Q})$ or $\mathrm{conv}(Q_1, \dots, Q_k)$ for the convex hull of $\{e_{Q_i} \in \R^p \mid 1 \le i \le k \}$.

Take a function $f \in \R^P$. Note that there is a canonical bijection $\R^P \ni f \mapsto \sum_{x \in P} f(x)e_x \in \R^p$.
Let $f(Q):=\sum_{x \in Q} f(x)$.
Then the definition of chain polytopes is given as follows (cf. \cite{S1}):

\begin{definition}
The {\it chain polytope} $\mathscr{C}(P)$ of the poset $P$ is the set of functions $f \in \R^P$ satisfying
\begin{itemize}
 \item $0 \le f(x),\ \forall x \in P$,
 \item $f(C) \le 1$, for every chain $C \subset P$.
\end{itemize}
\end{definition}

Then $\mathscr{C}(P)$ is a convex polytope of $\R^p$ under the canonical bijection. 
However, little is known about its face structure.
\begin{enumerate}
 \item The vertices of $\mathscr{C}(P)$ are $\{e_Q \mid Q \in \mathrm{AntiChain}(P) \}$. Thus, we have $\mathscr{C}(P)=\mathrm{conv}(\mathrm{AntiChain(P)})$ (cf. \cite{S1}).
 \item The number of edges of $\mathscr{C}(P)$ is equal to that of the order polytope $\mathscr{O}(P)$.
          Given two distinct antichains $Q_1, Q_2$ of $P$, $\mathrm{conv}(Q_1, Q_2)$ forms an edge of $\mathscr{C}(P)$
          if and only if $Q_1 \triangle Q_2 :=(Q_1 \setminus Q_2) \cap (Q_2 \setminus Q_1)$ is connected in $P$ (cf. \cite{HLSS, S1}). 
 \item Let $Q_1, Q_2, Q_3$ be distinct antichains of $P$.
          Then $\mathrm{conv} (Q_1, Q_2, Q_3)$ forms a 2-face of $\mathscr{C}(P)$ if and only if 
          $Q_1 \triangle Q_2, Q_2 \triangle Q_3$ and $Q_3 \triangle Q_1$ are connected in $P$ (cf. \cite{M}). 
 \item The facets of $\mathscr{C}(P)$ are the following (cf. \cite{S1}): 
 \begin{enumerate}
  \item $\{(t_{x_i}) \in \R^p \mid t_{x_k}=0\}$ for all $x_k \in P$,
  \item $\{(t_{x_i}) \in \R^p \mid \sum_{x \in C} t_x=1\}$, where $C \in \mathrm{MaxChain}(P)$.
 \end{enumerate}
\end{enumerate}

\begin{definition}
The {\it maximal chain polytope} $\mathscr{M}(P)$ of the poset $P$ is defined by $\mathscr{M}(P)=\mathrm{conv}(\mathrm{MaxChain}(P))$.
\end{definition}

In this paper, we shall determine the face structure of $\mathscr{M}(P)$ purely combinatorically;
Let $\mathcal{C}$ be a subset of $\mathrm{MaxChain}(P)$.
We will prove in Theorem \ref{mainthm} that $\mathrm{conv}(\mathcal{C})$ is not a face of $\mathscr{M}(P)$ if and only if $\mathcal{C}$ has an incomplete guided crown structure defined in Definition \ref{crownstr}.
It is also shown in Theorem \ref{mainthm2} that $\mathrm{conv}(\mathcal{C})$ is a $(\# \mathcal{C}-1)$-simplex and is a face of $\mathscr{M}(P)$ if and only if $\mathcal{C}$ does not have a guided crown structure.
Using these results and the basic properties of $\mathscr{M}(P)$, we present several results and examples in Chapter 3.
For instance, it is shown in Theorem \ref{thm321} that $\dim \mathscr{M}(\bm{m} \times \bm{n})=(m-1)(n-1)$.

Now, we will introduce another aspect of this work, which is applying posets to PERT/CPM in project scheduling (cf. \cite{CLM, S2}).

In project scheduling, projects are often modelled by activities and their properties such as time, cost, etc.
We will consider the so-called activity-on-node (AON) project network.
The set of activities $P$ has the ordering defined by $x < y$ if the activity $y$ cannot start before the activity $x$ is completed.
Suppose each activity $x$ takes time $f(x)$ to complete and consider the longest sequence of activities that must be completed to conclude a project.
Such a sequence is called the {\it critical path}.
That is to say, the critical path is the element $C \in \mathrm{MaxChain}(P)$ satisfying $f(C) \ge f(D)$ for every $D \in \mathrm{MaxChain}(P)$.
The time $f(C)$ to conclude a project is called the {\it earliest finishing time} (EFT).

Now, given an element $C$ of $\mathrm{MaxChain}(P)$,
let $\Gamma_C$ be the set of all points in $\R^p$ such that the corresponding function $f \in \R^P$ makes $C$ being the critical path.
Then $\R^p$ is divided into $\# \mathrm{MaxChain}(P)$ regions $\{\Gamma_C\}$.
Each $\Gamma_C$ can be viewed as a ``phase", and the phase transition may occur if the time cost $f(x)$ changes.
Let $C_1, \dots, C_k$ be elements of $\mathrm{MaxChain}(P)$.
Then the following question arises:
Do phases $\Gamma_{C_1}, \dots, \Gamma_{C_k}$ have a multicritical point?
More precisely, is there a point $f \in \R^P$ (here, we identify a point in $\R^p$ with a function in $\R^P$) such that $f(C_1)=\dots=f(C_k)>f(D)$ holds for all $D \not\in \mathrm{MaxChain}(P)$?
From Lemma \ref{lemma1}, such a point exists if and only if $\mathrm{conv}(C_1, \dots, C_k)$ is a face of $\mathscr{M}(P)$.
Therefore, we would like to know the face structure of $\mathscr{M}(P)$.

\begin{remark}
The earliest finishing time can be written as a tropical polynomial (where the addition is given by maximum) with all the coefficients being 0;
$$ F=\sum_{C \in \mathrm{MaxChain}(P)} \prod_{x \in C} t_x.$$
Then $\mathscr{M}(P)$ is the Newton polytope of $F$,
and the union of the boundaries of regions $\bigcup_{C \in \mathrm{MaxChain}(P)} \partial \Gamma_C$ is the tropical hypersurface $V(F)$
(cf. \cite{IMS, KO, MS}).
\end{remark}

\section{Guided crown structure and the existence of face}

The following lemma is well-known and plays an essential role throughout this paper.

\begin{lemma}\label{lemma1}
Let $\mathcal{C} \subset \mathrm{MaxChain}(P)$ be a nonempty set. Then the followings are equivalent:
\begin{enumerate}
 \item $\mathrm{conv}(\mathcal{C})$ is a face of $\mathscr{M}(P)$.
 \item There exists a function $f \in \R^P$ such that $f(C) \ge f(D)$ holds for every $C \in \mathcal{C}$ and $D \in \mathrm{MaxChain}(P)$,
         where the equality holds if and only if $D$ is an element of $\mathcal{C}$.
\end{enumerate}
\end{lemma}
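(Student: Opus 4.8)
The plan is to treat Lemma~\ref{lemma1} as an instance of the standard dictionary, in convex geometry, between faces of a polytope presented as a convex hull of finitely many points and the subsets of that point set that are ``exposed'' by a linear functional. I would identify a function $f \in \R^P$ with the linear functional $x \mapsto \langle f, x\rangle$ on $\R^p$, so that $\langle f, e_Q\rangle = f(Q)$ for every subset $Q \subset P$; under this identification condition $(2)$ says exactly that some affine hyperplane meets $\mathscr{M}(P)$ in the vertex set $\{e_C : C \in \mathcal{C}\}$ and in no other vertex.

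The one combinatorial input I would isolate first is the elementary fact that \emph{a $0/1$ vector lying in the convex hull of finitely many $0/1$ vectors must equal one of them}: if $v = \sum_i \lambda_i v_i$ with $\lambda_i > 0$, $\sum_i \lambda_i = 1$, and all coordinates in $\{0,1\}$, then a coordinatewise comparison in each coordinate forces $(v_i)_j = v_j$ for every $i$. Since $D \mapsto e_D$ is injective on $\mathrm{MaxChain}(P)$, this has two consequences I will use: the vertices of $\mathscr{M}(P)$ are precisely the $e_C$ with $C \in \mathrm{MaxChain}(P)$, and for any $\mathcal{C} \subset \mathrm{MaxChain}(P)$ one has $e_D \in \mathrm{conv}(\mathcal{C})$ if and only if $D \in \mathcal{C}$.

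For $(2) \Rightarrow (1)$: given such an $f$, apply the inequality of $(2)$ with two chains of $\mathcal{C}$ in both roles to see that $f$ is constant on $\mathcal{C}$, say equal to $c$; then $f(D) \le c$ for all $D \in \mathrm{MaxChain}(P)$, with equality iff $D \in \mathcal{C}$. By linearity and $\mathscr{M}(P) = \mathrm{conv}(\mathrm{MaxChain}(P))$, the hyperplane $H = \{x : \langle f, x\rangle = c\}$ supports $\mathscr{M}(P)$, so $F := \mathscr{M}(P) \cap H$ is a (nonempty) face containing $\mathrm{conv}(\mathcal{C})$. Conversely, writing any $x \in F$ as $\sum_{D} \lambda_D e_D$ and using $\langle f, x\rangle = c = \sum_D \lambda_D c$ together with $f(D) < c$ for $D \notin \mathcal{C}$ forces $\lambda_D = 0$ for all such $D$, so $F \subseteq \mathrm{conv}(\mathcal{C})$; hence $\mathrm{conv}(\mathcal{C}) = F$ is a face. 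For $(1) \Rightarrow (2)$: if $\mathcal{C} = \mathrm{MaxChain}(P)$ take $f \equiv 0$; otherwise $\mathrm{conv}(\mathcal{C})$ is a proper nonempty face, so by the standard fact that every proper face of a polytope is cut out by a supporting hyperplane there exist $f \in \R^P$ and $c \in \R$ with $f(D) \le c$ for all $D \in \mathrm{MaxChain}(P)$ and $\mathrm{conv}(\mathcal{C}) = \{x \in \mathscr{M}(P) : \langle f, x\rangle = c\}$. Every $C \in \mathcal{C}$ lies in this face, giving $f(C) = c \ge f(D)$; and $f(D) = c$ forces $e_D \in \mathrm{conv}(\mathcal{C})$, hence $D \in \mathcal{C}$ by the $0/1$ observation, which yields $(2)$.

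I expect no serious obstacle: the argument is essentially the definition of a face plus linearity of $f(\cdot)$. The only place where the specific combinatorial structure of $\mathscr{M}(P)$ (rather than soft convexity) is genuinely needed is the ``only if'' half of the equality clause in $(2)$ — ruling out that some maximal chain $D \notin \mathcal{C}$ accidentally lies on the supporting hyperplane — and that is handled by the $0/1$-vector remark, since all generators $e_D$ are vertices.
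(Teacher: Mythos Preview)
Your proof is correct and follows essentially the same route as the paper's: both directions rest on the supporting-hyperplane characterization of faces, with the zero function handling the improper case $\mathcal{C}=\mathrm{MaxChain}(P)$. The only difference is presentational: you make the $0/1$-vector argument explicit up front to justify the ``equality iff $D\in\mathcal{C}$'' clause, whereas the paper leaves that step tacit here and proves it separately later as Lemma~\ref{lem38}.
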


\begin{proof}
(1)$\Rightarrow$(2): \
If $\mathcal{C}=\mathrm{MaxChain}(P)$, then the zero function $f(x)=0, \ \forall x \in P$ satisfies (2). 
Otherwise, there exists a supporting hyperplane
$$\left\{ (t_x)_{x \in P} \in \R^p \mid \sum_{x \in P} a_x t_x =b \right\}$$
of $\mathrm{conv}(\mathcal{C})$ such that $\mathscr{M}(P)$ is contained in the closed half-space
$$\left\{ (t_x)_{x \in P} \in \R^p \mid \sum_{x \in P} a_x t_x \le b\right\}.$$
Then the function $f(x)=a_x$ satisfies (2).

(2)$\Rightarrow$(1): \
Fix an element $C \in \mathcal{C}$. Then the hyperplane
$$\left\{ (t_x)_{x \in P} \in \R^p \mid \sum_{x \in P} f(x) t_x = f(C)\right\}$$
is the supporting hyperplane of $\mathrm{conv}(\mathcal{C})$.
\end{proof}

Let $Q$ be a subset of $P$.
For simplicity, the intersection of $Q$ and an interval of $P$ are represented using subscripts, e.g., $(x, y)_Q:=(x, y) \cap Q$.
Suppose $\mathcal{C} \subset \mathrm{MaxChain}(P)$.
For elements $x_1, x_2, \dots$ of $P$, we denote $\mathcal{C}_{x_1x_2\dots}:=\{C \in \mathcal{C} \mid x_i \in C,\ \forall i \}$.

\begin{definition}

\begin{enumerate}
 \item Let $\rho \ge 2$.
 $W=(\alpha_1, \beta_1, \alpha_2, \beta_2, \dots, \alpha_\rho, \beta_\rho) \ (\alpha_i, \beta_i \in P)$
 is called a {\it guided ($\rho$-)crown} of $\mathcal{C}$ if the following holds:
 \begin{itemize}
  \item $\alpha_i \lessdot \beta_i, \alpha_i \lessdot \beta_{i-1},\ \forall i \in \{1, \dots, \rho\}$, where $\beta_0=\beta_\rho$,
  \item $\alpha_i \neq \alpha_j, \beta_i \neq \beta_j$ for every distinct $i, j \in \{1, \dots, \rho\}$,
  \item $\mathcal{C}_{\alpha_i \beta_i} \neq \emptyset,\ \forall i \in \{1, \dots, \rho\}$.
 \end{itemize}
 For simplicity, we will identify the index $\rho$ with $0$, e.g., $\alpha_\rho=\alpha_0$.
 
 Let $W$ be a guided crown.
 $W$ is {\it complete} if for every $i \in \{1, \dots, \rho\}$,
 $$C_i \in \mathcal{C}_{\alpha_i\beta_i}, C_{i-1} \in \mathcal{C}_{\alpha_{i-1}\beta_{i-1}} \Rightarrow (-\infty, \alpha_i]_{C_i} \cup [\beta_{i-1}, \infty)_{C_{i-1}} \in \mathcal{C}.$$
 Otherwise, we say that $W$ is {\it incomplete}.

 \item $X=(\alpha_1, \beta_1, \alpha_2, \beta_2) \ (\alpha_i, \beta_i \in P)$ is called a {\it guided star} of $\mathcal{C}$ if the following holds:
 \begin{itemize}
  \item $\alpha_1 \not\lesseqgtr \alpha_2, \beta_1 \not\lesseqgtr \beta_2$,
  \item $\alpha_1 < \beta_1, \alpha_2 < \beta_2$,
  \item there exists $C_1 \in \mathcal{C}_{\alpha_1\beta_1}, C_2 \in \mathcal{C}_{\alpha_2\beta_2}$ such that $(\alpha_1, \beta_1)_{C_1} \cap (\alpha_2, \beta_2)_{C_2} \neq \emptyset$.
 \end{itemize}
 Let $X$ be a guided star.
 $X$ is {\it complete} if for every $C_1 \in \mathcal{C}_{\alpha_1\beta_1}$ and $C_2 \in \mathcal{C}_{\alpha_2\beta_2}$
 satisfying $(\alpha_1, \beta_1)_{C_1} \cap (\alpha_2, \beta_2)_{C_2} \neq \emptyset$,
 $$(-\infty, \gamma]_{C_1} \cup (\gamma, \infty)_{C_2} \in \mathcal{C} \ \text{and} \ (-\infty, \gamma]_{C_2} \cup (\gamma, \infty)_{C_1} \in \mathcal{C}$$
 holds for every $\gamma \in (\alpha_1, \beta_1)_{C_1} \cap (\alpha_2, \beta_2)_{C_2}$. Otherwise, we say that $X$ is {\it incomplete}.
\end{enumerate}
\end{definition}

\begin{remark}
When there is no possibility of confusion, guided $\rho$-crowns and guided stars will be referred to simply as $\rho$-crowns and stars, respectively.
\end{remark}

\begin{remark}
If $(\alpha_1, \beta_1, \alpha_2, \beta_2, \dots, \alpha_\rho, \beta_\rho)$ (resp. $(\alpha_1, \beta_1, \alpha_2, \beta_2)$) is a $\rho$-crown (resp. a star), then the same holds for its cyclic permutations
$$(\alpha_2, \beta_2, \dots, \alpha_\rho, \beta_\rho, \alpha_1, \beta_1), \dots, (\alpha_\rho, \beta_\rho, \alpha_1, \beta_1, \dots, \alpha_{\rho-1}, \beta_{\rho-1})$$
(resp. $(\alpha_2, \beta_2, \alpha_1, \beta_1)$).
\end{remark}

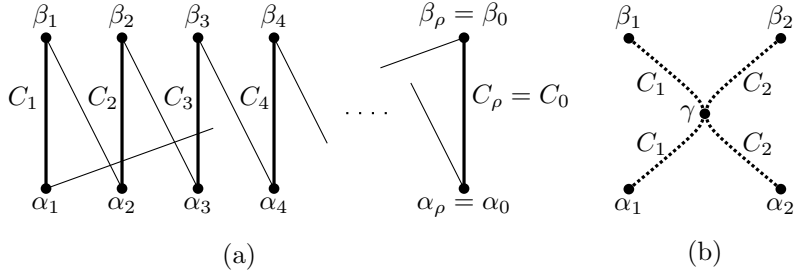
\begin{figure}[t]
\begin{minipage}{0.50\hsize}\centering
\begin{tikzpicture}
\coordinate (O) at (0,0) node at (O) [below] {$\alpha_1$} ;
\coordinate (A) at (0,2) node at (A) [above] {$\beta_1$} ;
\coordinate (B) at (1,0) node at (B) [below] {$\alpha_2$} ;
\coordinate (C) at (1,2) node at (C) [above] {$\beta_2$} ;
\coordinate (D) at (2,0) node at (D) [below] {$\alpha_3$} ;
\coordinate (E) at (2,2) node at (E) [above] {$\beta_3$} ;
\coordinate (F) at (3,0) node at (F) [below] {$\alpha_4$} ;
\coordinate (G) at (3,2) node at (G) [above] {$\beta_4$} ;
\coordinate (H) at (5.5,0) node at (H) [below] {$\alpha_\rho=\alpha_0$} ;
\coordinate (I) at (5.5,2) node at (I) [above] {$\beta_\rho=\beta_0$} ;
\coordinate (X) at (3.7,0.6) ;
\coordinate (Y) at (4.8,1.4) ;
\coordinate (S) at (3.95,1) ;
\coordinate (T) at (4.55,1) ;

\fill (O) circle (2pt) (A) circle (2pt) (B) circle (2pt) (C) circle (2pt) (D) circle (2pt) (E) circle (2pt) (F) circle (2pt) (G) circle (2pt) (H) circle (2pt) (I) circle (2pt);

\draw (O) -- (2.2,0.8) ;
\draw (4.4,1.6) -- (I) ;

\draw (A) -- (B) ;
\draw (C) -- (D) ;
\draw (E) -- (F) ;
\draw (G) -- (X) ;
\draw (Y) -- (H) ;
\draw[thick,loosely dotted] (S)--(T) ; 
\draw[very thick] (O) -- (A) ;
\draw[very thick] (B) -- (C) ;
\draw[very thick] (D) -- (E) ;
\draw[very thick] (F) -- (G) ;
\draw[very thick] (H) -- (I) ;

\node(C1) at (-0.3,1.2) {$C_1$};
\node(C2) at (0.75,1.2) {$C_2$};
\node(C3) at (1.75,1.2) {$C_3$};
\node(C4) at (2.75,1.2) {$C_4$};
\node(Cr) at (6.25,1.2) {$C_{\rho}=C_0$};

\end{tikzpicture}
\begin{center}(a) \end{center}
\end{minipage}
\begin{minipage}{0.45\hsize}\centering
\begin{tikzpicture}
\coordinate (O) at (0,0) node at (O) [below] {$\alpha_1$} ;
\coordinate (A) at (0,2) node at (A) [above] {$\beta_1$} ;
\coordinate (B) at (2,0) node at (B) [below] {$\alpha_2$} ;
\coordinate (C) at (2,2) node at (C) [above] {$\beta_2$} ;
\coordinate (D) at (1,1) node at (D) [left] {$\gamma$};

\fill (O) circle (2pt) (A) circle (2pt) (B) circle (2pt) (C) circle (2pt) (D) circle (2pt);

\draw[densely dotted, very thick] (0.99,1) .. controls (0.95,0.8) .. (O);
\draw[densely dotted, very thick] (0.99,1) .. controls (0.95,1.2) .. (A);
\draw[densely dotted, very thick] (1.01,1) .. controls (1.05,0.8) .. (B);
\draw[densely dotted, very thick] (1.01,1) .. controls (1.05,1.2) .. (C);

\node(C11) at (0.3,0.6) {$C_1$};
\node(C12) at (0.3,1.4) {$C_1$};
\node(C21) at (1.7,0.6) {$C_2$};
\node(C22) at (1.7,1.4) {$C_2$};

\end{tikzpicture}
\begin{center}(b) \end{center}
\end{minipage}
\caption{(a) shows a guided $\rho$-crown; (b) shows a guided star. Solid lines indicate covering relations. Thick lines indicate elements lying in the same maximal chain $C_i \in \mathcal{C}$.}
\label{fig1}
\end{figure}

\begin{definition}\label{crownstr}
$\mathcal{C}$ {\it has a guided $\rho$-crown} (resp. {\it a guided star}) if there exists a sequence of elements which is a $\rho$-crown (resp. a star).
If $\mathcal{C}$ has either a $\rho$-crown or a star, we say that $\mathcal{C}$ {\it has a guided crown structure}.
Suppose $\mathcal{C}$ has a guided crown structure.
$\mathcal{C}$ {\it has a complete guided crown structure} if every crown is complete, and every star is complete.
If not, $\mathcal{C}$ {\it has an incomplete guided crown structure}.
\end{definition}

\begin{remark}
When there is no possibility of confusion, we shall again omit the word ``guided''
(except for the statements in theorems).
For instance, guided crown structures will be referred to simply as crown structures. 
\end{remark}

Note that $\mathrm{MaxChain}(P)$ itself has either a complete crown structure or does not have a crown structure.

\begin{remark}\label{rem27}
If $\mathcal{C}$ has a crown structure, then $\#\mathcal{C} \ge 2$.
\end{remark}

\begin{remark}
$\mathcal{C}$ has a star if and only if there exists $C_1,\ C_2$ such that $C_1 \setminus C_2$ is disconnected as the induced subgraph of the Hasse diagram of $C_1$.
In such a case, $C_2 \setminus C_1$ is also disconnected as the induced subgraph of the Hasse diagram of $C_2$.
We call $\{C_1,\ C_2\}$ {\it star guides}.

Suppose $\mathcal{C}$ does not have a crown.
Then $\mathcal{C}$ does not have an incomplete crown structure if and only if for every star guides $\{C_1,\ C_2\}$ and every $\gamma  \in C_1 \cap C_2$,
$$(-\infty, \gamma]_{C_1} \cup (\gamma, \infty)_{C_2} \in \mathcal{C} \ \text{and} \ (-\infty, \gamma]_{C_2} \cup (\gamma, \infty)_{C_1} \in \mathcal{C}.$$
\end{remark}

For $C_1, C_2 \in \mathrm{MaxChain}(P)$, denote by $C_1 \tridot C_2$ the set $(C_1 \setminus C_2) \cup (C_2 \setminus C_1)$
which we regard as the induced subposet of the cover preserving subposet $C_1 \cup C_2$ of $P$.
That is to say, $x <_{C_1 \tridot C_2} y$ if and only if there exists a covering sequence 
$x=z_1 \lessdot_P \dots \lessdot_P z_k=y$, where each $z_i$ is an element of $C_1 \cup C_2$.
Especially, if $x, y \in (C_1 \setminus C_2) \cup (C_2 \setminus C_1)$ satisfy $x \lessdot_P y$, then we have $x \lessdot_{C_1 \tridot C_2} y$.
Note that $C_1 \setminus C_2, C_2 \setminus C_1$ are both maximal chains of this poset.

\begin{proposition}\label{prop29}
The followings are equivalent:
\begin{enumerate}
 \item $\mathcal{C}$ has either a $2$-crown or a star.
 \item There exists $C_1, C_2 \in \mathcal{C}$ such that
          $$\mathcal{C}':=\{C_1 \setminus C_2, C_2 \setminus C_1\} \subset \mathrm{MaxChain}(C_1 \tridot C_2)$$
          has a $2$-crown.
\end{enumerate}
\end{proposition}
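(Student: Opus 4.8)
The plan is to prove the two implications separately, in each direction splitting according to whether a $2$-crown or a star of $\mathcal{C}$ is at issue. For a fixed pair $C_1,C_2\in\mathcal{C}$ I will write $Q:=C_1\tridot C_2$, $\bar C_1:=C_1\setminus C_2$, $\bar C_2:=C_2\setminus C_1$ (so $\mathcal{C}'=\{\bar C_1,\bar C_2\}$, each $\bar C_i$ a maximal chain of $Q$), and $C_1\cap C_2=\{w_1<\dots<w_s\}$, which sits inside $C_1$ and inside $C_2$ in the same order. First I would record the local structure of $Q$: the $w_t$ cut $C_1$ into blocks $A_0,\dots,A_s$ (with $A_k\subseteq\bar C_1$ lying between $w_k$ and $w_{k+1}$) and $C_2$ into blocks $B_0,\dots,B_s$, and $A_k=\emptyset\iff B_k=\emptyset$ (otherwise a covering relation inside a maximal chain would have an element strictly between its endpoints). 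Two further facts: $<_Q$ restricted to $\bar C_i$ is the chain order of $C_i$; and the only element of $C_1\cup C_2$ covering a block-maximum of $\bar C_i$ in $P$ is the adjacent element of $C_1\cap C_2$ (dually for block-minima). These give the dichotomy I will lean on: a cover $\alpha\lessdot_Q\beta$ is realised in $P$ either by a genuine cover $\alpha\lessdot_P\beta$, or by a saturated chain of length $\ge2$ with interior in $C_1\cap C_2$; in the latter case $\alpha$ is a block-maximum of some $\bar C_i$ that has a further nonempty block above it, so $\bar C_i$ is disconnected in the Hasse diagram of $C_i$.

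For $(2)\Rightarrow(1)$, let $(\alpha_1,\beta_1,\alpha_2,\beta_2)$ be a $2$-crown of $\mathcal{C}'$ in $Q$. Since $\alpha_1$ is $Q$-covered by the distinct elements $\beta_1,\beta_2$ and a maximal chain of $Q$ supplies at most one $Q$-cover of $\alpha_1$, the four points do not all lie in one member of $\mathcal{C}'$; with $\mathcal{C}'_{\alpha_1\beta_1}\neq\emptyset\neq\mathcal{C}'_{\alpha_2\beta_2}$ this lets me assume $\alpha_1,\beta_1\in\bar C_1$ and $\alpha_2,\beta_2\in\bar C_2$. I then apply the dichotomy to the four covers $\alpha_1\lessdot_Q\beta_1$, $\alpha_1\lessdot_Q\beta_2$, $\alpha_2\lessdot_Q\beta_1$, $\alpha_2\lessdot_Q\beta_2$: if all of them are genuine $P$-covers, then $(\alpha_1,\beta_1,\alpha_2,\beta_2)$ is a $2$-crown of $\mathcal{C}$ with witnesses $C_1,C_2$; otherwise some $\bar C_i$ is disconnected in the Hasse diagram of $C_i$, so $\mathcal{C}$ has a star by the characterisation of star guides in the remark above. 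Either way $\mathcal{C}$ satisfies $(1)$.

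For $(1)\Rightarrow(2)$, suppose first $\mathcal{C}$ has a $2$-crown $(\alpha_1,\beta_1,\alpha_2,\beta_2)$ with witnesses $C_1\in\mathcal{C}_{\alpha_1\beta_1}$, $C_2\in\mathcal{C}_{\alpha_2\beta_2}$. The crown axioms force $\alpha_1\not\lesseqgtr\alpha_2$ and $\beta_1\not\lesseqgtr\beta_2$, so none of the four points lies in both $C_1$ and $C_2$; hence all four lie in $Q$, each $\alpha_i\lessdot_P\beta_j$ has both endpoints in $C_1\cup C_2$ and so gives $\alpha_i\lessdot_Q\beta_j$, and the same tuple is a $2$-crown of $\mathcal{C}'$ in $Q$ (the realising chains being $\bar C_1\ni\alpha_1,\beta_1$ and $\bar C_2\ni\alpha_2,\beta_2$). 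Suppose instead $\mathcal{C}$ has a star; then (by the remark, or straight from the witnesses of the star) there are $C_1,C_2\in\mathcal{C}$ with $\bar C_1$, hence $\bar C_2$, disconnected, i.e. with at least two nonempty blocks. I pick an index $k$ with $A_k\neq\emptyset$ and some nonempty $A$-block above $w_{k+1}$, set $\gamma:=w_{k+1}$, $a_1:=\max A_k$, $b_1:=\max B_k$, and take $a_2$ (resp. $b_2$) to be the smallest element of $\bar C_1$ (resp. $\bar C_2$) above $\gamma$; the matching property makes $a_2,b_2$ block-minima of blocks of equal index. By the block-maximum fact, from $a_1$ and from $b_1$ every saturated chain inside $C_1\cup C_2$ must pass through $\gamma$ before meeting an element of $Q$, and the first elements of $Q$ so reached are exactly $a_2$ and $b_2$; this yields $a_1\lessdot_Q a_2$, $a_1\lessdot_Q b_2$, $b_1\lessdot_Q a_2$, $b_1\lessdot_Q b_2$, together with $a_1\not\lesseqgtr_Q b_1$ and $a_2\not\lesseqgtr_Q b_2$, so that $(a_1,a_2,b_1,b_2)$ is a $2$-crown of $\mathcal{C}'$ in $Q$. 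Hence $(2)$ holds.

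The hard part is the preliminary local analysis of the order on $C_1\tridot C_2$; everything rests on the matching property $A_k=\emptyset\iff B_k=\emptyset$ and on block-extremal elements of $\bar C_i$ having a unique cover inside $C_1\cup C_2$. Together these identify exactly when a $Q$-cover ``stretches'' through the common part $C_1\cap C_2$ --- the case forcing a disconnection, hence a star --- and single out the diamond of block-extrema realising a $2$-crown in $Q$. After that, both implications are routine bookkeeping with segments of $C_1$ and $C_2$.
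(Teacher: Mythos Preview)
Your argument is correct and follows essentially the same route as the paper's proof: in both directions the work reduces to the interplay between covers in $C_1\tridot C_2$ and covers in $P$, with the common part $C_1\cap C_2$ mediating between the two. The only notable difference is packaging: you set up an explicit block decomposition $A_0,\dots,A_s$ and $B_0,\dots,B_s$ and a cover dichotomy first, whereas the paper argues ad hoc with the extremal elements of $(-\infty,\gamma)_{C_i}\setminus C_{3-i}$ and $(\gamma,\infty)_{C_i}\setminus C_{3-i}$; and in $(2)\Rightarrow(1)$ the paper shows the \emph{same} tuple is a star of $\mathcal{C}$, while you conclude only that some star exists via the disconnection criterion---both are sufficient for the proposition.
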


\begin{proof}
$(1) \Rightarrow (2)$:\
If $\mathcal{C}$ has a $2$-crown $(\alpha_1, \beta_1, \alpha_2, \beta_2)$,
then for any $C_1 \in \mathcal{C}_{\alpha_1 \beta_1}$ and $C_2 \in \mathcal{C}_{\alpha_2 \beta_2}$, $\mathcal{C}'$ also has a $2$-crown $(\alpha_1, \beta_1, \alpha_2, \beta_2)$.

Suppose $\mathcal{C}$ has a star $(\alpha_1, \beta_1, \alpha_2, \beta_2)$.
Then there exists $C_1 \in \mathcal{C}_{\alpha_1 \beta_1}, C_2 \in \mathcal{C}_{\alpha_2 \beta_2}$ and $\gamma \in (\alpha_1, \beta_1)_{C_1} \cap (\alpha_2, \beta_2)_{C_2}$.
For each $i \in \{1, 2\}$, let $\alpha'_i$ and $\beta'_i$ be the largest and the smallest element of $(-\infty, \gamma)_{C_i} \setminus C_{3-i}$ and $(\gamma, \infty)_{C_i} \setminus C_{3-i}$, respectively (since $\alpha_i \in (-\infty, \gamma)_{C_i} \setminus C_{3-i}$ and $\beta_i \in (\gamma, \infty)_{C_i} \setminus C_{3-i}$ holds, such elements do exist).
Then $(\alpha'_1, \beta'_1, \alpha'_2, \beta'_2)$ is a $2$-crown of $\mathcal{C}'$.

$(2) \Rightarrow (1)$:\
Let $(\alpha_1, \beta_1, \alpha_2, \beta_2)$ be a $2$-crown of $\mathcal{C}'$.
Without loss of generality,  we may assume $\alpha_1, \beta_1 \in C_1 \setminus C_2$.
Then $\alpha_2, \beta_2 \in C_2 \setminus C_1$ must hold because $\alpha_1, \alpha_2$ are incomparable and $\beta_1, \beta_2$ are also incomparable.

If there exists $\gamma \in (\alpha_i, \beta_i)_{C_i}$ for some $i \in \{1, 2\}$, then $\gamma$ is an element of $C_1 \cap C_2$.
Hence $\alpha_{3-i} < \gamma < \beta_{3-i}$ must hold because of the incomparability again.
Thus, $(\alpha_1, \beta_1, \alpha_2, \beta_2)$ is a star of $\mathcal{C}$.
This argument also implies that $(\alpha_1, \beta_1)_{C_1}=\emptyset$ if and only if $(\alpha_2, \beta_2)_{C_2}=\emptyset$.
In this case, $(\alpha_1, \beta_1, \alpha_2, \beta_2)$ is a $2$-crown of $\mathcal{C}$.
\end{proof}

\begin{example}\label{ex210}
Let $P_1, P_2$ and $P_3$ be the posets whose Hasse diagrams are shown in Figure \ref{fig2}.
Here, we consider crowns/stars up to cyclic permutations.
\begin{enumerate}
 \item[($P_1$)] Let $\mathcal{C}_1=\{1356, 1357, 2456, 2457\}$.
                     Then $\mathcal{C}_1$ has complete stars $(\alpha_1, 6, \alpha_2, 7)$ and $(\alpha_1, 7, \alpha_2, 6)$, where $\alpha_1 \in \{1, 3\}, \alpha_2 \in \{2, 4\}$.
                     Also, there is an incomplete $2$-crown $(1, 3, 2, 4)$.
                     Thus, $\mathcal{C}$ has an incomplete crown structure.
 \item[($P_2$)] $\mathcal{C}_2=\{125, 1368, 468, 478\}$ has an incomplete crown structure because of the incomplete $3$-crown $(2, 5, 3, 6, 4, 7)$. 
                      On the other hand, $\mathcal{C}_2'=\{1278, 135,\allowbreak 1368, 478 \}$ do not have a crown structure.
 \item[($P_3$)] $\mathcal{C}_3=\{146, 257, 38\}$ has an incomplete crown structure with the incomplete $5$-crown (which is the only crown/star) being $(1, 4, 2, 5, 3, 8, 4, 6, 5, 7)$.
                      Let $\mathcal{C}_3'=\{146, 256, 257, 38\}$.
                      Since $\mathcal{C}_3'$ includes $\mathcal{C}_3$, $\mathcal{C}_3'$ also has the same $5$-crown.
                      Moreover, set $C_2=256 \in \mathcal{C}_{25}, C_4=146 \in \mathcal{C}_{46}$ and $C_5=257 \in \mathcal{C}_{57}$.
                      Then we have $(-\infty, 5]_{C_5} \cup [6, \infty)_{C_4}=C_2$.
\end{enumerate}
\end{example}

\begin{figure}[t]
\begin{minipage}{0.3\hsize}\centering
\begin{tikzpicture}
\coordinate (O) at (0,0) node at (O) [below] {1} ;
\coordinate (A) at (1,0) node at (A) [below] {2} ;
\coordinate (B) at (0,1) node at (B) [left] {3} ;
\coordinate (C) at (1,1) node at (C) [right] {4} ;
\coordinate (D) at (0.5,1.5) node at (D) [left] {5} ;
\coordinate (E) at (0,2) node at (E) [above] {6} ;
\coordinate (F) at (1,2) node at (F) [above] {7} ;

\fill (O) circle (2pt) (A) circle (2pt) (B) circle (2pt) (C) circle (2pt) (D) circle (2pt) (E) circle (2pt) (F) circle (2pt);

\draw (O) -- (B) ;
\draw (O) -- (C) ;
\draw (A) -- (B) ;
\draw (A) -- (C) ;
\draw (B) -- (D) ;
\draw (C) -- (D) ;
\draw (D) -- (E) ;
\draw (D) -- (F) ;
\end{tikzpicture}
\begin{center}(a) $P_1$\end{center}
\end{minipage}
\begin{minipage}{0.3\hsize}\centering
\begin{tikzpicture}
\coordinate (O) at (0,0) node at (O) [below] {2} ;
\coordinate (A) at (1,0) node at (A) [below] {3} ;
\coordinate (B) at (2,0) node at (B) [below] {4} ;
\coordinate (C) at (0,1) node at (C) [above] {5} ;
\coordinate (D) at (1,1) node at (D) [above] {6} ;
\coordinate (E) at (2,1) node at (E) [above] {7} ;
\coordinate (X) at (0.5,-0.5) node at (X) [below] {1};
\coordinate (Y) at (1.5,1.5) node at (Y) [above] {8};

\fill (O) circle (2pt) (A) circle (2pt) (B) circle (2pt) (C) circle (2pt) (D) circle (2pt)  (E) circle (2pt) (X) circle (2pt)  (Y) circle (2pt);

\draw (X) -- (O) ;
\draw (X) -- (A) ;
\draw (O) -- (C) ;
\draw (O) -- (E) ;
\draw (A) -- (D) ;
\draw (A) -- (C) ;
\draw (B) -- (E) ;
\draw (B) -- (D) ;
\draw (D) -- (Y) ;
\draw (E) -- (Y) ;
\end{tikzpicture}
\begin{center}(b) $P_2$\end{center}
\end{minipage}
\begin{minipage}{0.3\hsize}\centering
\begin{tikzpicture}
\coordinate (A) at (0,0) node at (A) [below] {1} ;
\coordinate (B) at (1,0) node at (B) [below] {2} ;
\coordinate (C) at (2,0) node at (C) [below] {3} ;
\coordinate (D) at (0,1) node at (D) [left] {4} ;
\coordinate (E) at (1,1) node at (E) [right] {5} ;
\coordinate (F) at (0,2) node at (F) [above] {6} ;
\coordinate (G) at (1,2) node at (G) [above] {7} ;
\coordinate (H) at (2,2) node at (H) [above] {8} ;

\fill (A) circle (2pt) (B) circle (2pt) (C) circle (2pt) (D) circle (2pt) (E) circle (2pt)  (F) circle (2pt) (G) circle (2pt) (H) circle (2pt);

\draw (A) -- (D) ;
\draw (A) -- (G) ;
\draw (B) -- (D) ;
\draw (B) -- (E) ;
\draw (C) -- (E) ;
\draw (C) -- (H) ;
\draw (D) -- (F) ;
\draw (D) -- (H) ;
\draw (E) -- (F) ;
\draw (E) -- (G) ;
\end{tikzpicture}
\begin{center}(c) $P_3$\end{center}
\end{minipage}
\caption{Three posets.}
\label{fig2}
\end{figure}
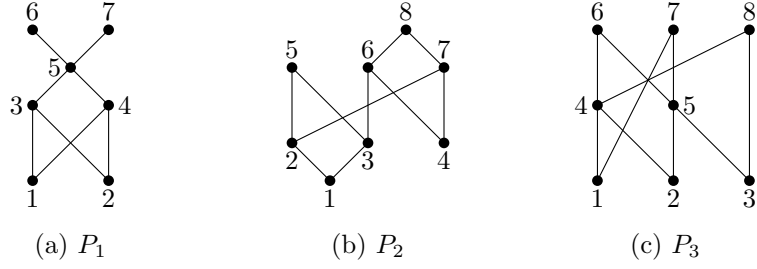

Suppose $W=(\alpha_1, \beta_2, \dots, \alpha_\rho, \beta_\rho)$ is a $\rho$-crown
and fix $C_i \in \mathcal{C}_{\alpha_i \beta_i}$ for each $i \in \{1, \dots, \rho\}$.
Then $C_i \neq C_j$ yields $i \neq j$.
However, as we saw with $P_3$ in the previous example, the converse does not hold in general.
Moreover, $(-\infty, \alpha_j]_{C_j} \cup [\beta_{j-1}, \infty)_{C_{j-1}}$ may equal to $C_i$ for some $i$.

\begin{proposition}\label{prop}
There exists $k \in \{1, \dots, \rho\}$ such that
$$(-\infty, \alpha_k]_{C_k} \cup [\beta_{k-1}, \infty)_{C_{k-1}} \neq C_i,\ \forall i \in \{1, \dots, \rho\}.$$
\end{proposition}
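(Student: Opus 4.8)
The plan is to argue by contradiction. Suppose that for every $k$ the set $D_k:=(-\infty,\alpha_k]_{C_k}\cup[\beta_{k-1},\infty)_{C_{k-1}}$ — which is a maximal chain of $P$ since $\alpha_k\lessdot\beta_{k-1}$, and in which $\beta_{k-1}$ is the successor of $\alpha_k$ — equals $C_{\sigma(k)}$ for some $\sigma(k)\in\{1,\dots,\rho\}$.

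First I would record some elementary facts. In $C_i$ the pair $\alpha_i\lessdot\beta_i$ is consecutive. Since $\alpha_{k-1}$ and $\alpha_k$ are incomparable (both are covered by $\beta_{k-1}$) we get $\alpha_k\notin C_{k-1}$, and since $\beta_{k-1}$ and $\beta_k$ are incomparable (both cover $\alpha_k$) we get $\beta_k\notin C_{k-1}$, hence $\beta_k\notin D_k$. Comparing these with $\alpha_k\in D_k$ and $\beta_k\in C_k$ gives $D_k\neq C_k$ and $D_k\neq C_{k-1}$, so $\sigma(k)\notin\{k-1,k\}$; the same incomparabilities also exclude $\sigma(k)=k+1$. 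In particular, if $\rho\le 3$ no value is left for $\sigma(k)$, so we may assume $\rho\ge 4$.

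Next comes the key dichotomy and a monovariant. As a set, $D_k$ splits as the disjoint union of $L:=(-\infty,\alpha_k]_{C_k}$ (all elements $\le\alpha_k$) and $U:=[\beta_{k-1},\infty)_{C_{k-1}}$ (all elements $\ge\beta_{k-1}>\alpha_k$), and the only consecutive pair of $D_k$ straddling $L$ and $U$ is $\alpha_k\lessdot\beta_{k-1}$. Hence, writing $i:=\sigma(k)$, the consecutive pair $\alpha_i\lessdot\beta_i$ of $D_k=C_i$ (which is not $\alpha_k\lessdot\beta_{k-1}$) lies entirely in $L$ or entirely in $U$: either (I) $\alpha_i<\beta_i\le\alpha_k$ with $\alpha_i,\beta_i\in C_k$, or (II) $\beta_{k-1}\le\alpha_i<\beta_i$ with $\alpha_i,\beta_i\in C_{k-1}$. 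Set $h_k:=\lvert(-\infty,\alpha_k]_{C_k}\rvert$, the position of $\alpha_k$ in $C_k$. In case (I), $C_i=D_k$ agrees with $C_k$ on all elements $\le\alpha_k$, so $(-\infty,\alpha_i]_{C_i}=(-\infty,\alpha_i]_{C_k}\subsetneq(-\infty,\alpha_k]_{C_k}$, i.e.\ $h_i<h_k$; in case (II), $C_i=D_k$ agrees with $C_{k-1}$ on all elements $\ge\beta_{k-1}$, and a direct count of $(-\infty,\alpha_i]_{D_k}$ gives $h_i=h_k+\lvert[\beta_{k-1},\alpha_i]_{C_{k-1}}\rvert>h_k$. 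So along the orbit $k\mapsto\sigma(k)\mapsto\sigma^2(k)\mapsto\cdots$ the integer $h$ strictly decreases on case-(I) steps and strictly increases on case-(II) steps. This orbit is eventually periodic, and once it reaches a cycle, $h$ returns to its starting value around that cycle, so the cycle contains both a case-(I) step and a case-(II) step.

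Finally, the crux. On such a $\sigma$-cycle pick an index $b$ with $h_b$ maximal among the cycle's members, and let $a$ and $c$ be its cyclic predecessor and successor, so $\sigma(a)=b$ and $\sigma(b)=c$. Maximality of $h_b$ forces the step $a\to b$ to be case (II) (whence $\alpha_b\ge\beta_{a-1}$ and $C_b=D_a=(-\infty,\alpha_a]_{C_a}\cup[\beta_{a-1},\infty)_{C_{a-1}}$) and the step $b\to c$ to be case (I) (whence the consecutive pair $\alpha_c\lessdot\beta_c$ of $C_b$ satisfies $\beta_c\le\alpha_b$). Since $\alpha_b$ lies in the upper block of $D_a$, every element of $C_b$ below $\alpha_b$ lies in $(-\infty,\alpha_a]_{C_a}\cup[\beta_{a-1},\alpha_b]_{C_{a-1}}$; hence $(\alpha_c,\beta_c)$ is forced — strictly below $(\alpha_b,\beta_b)$ — either into $C_a$ below $\alpha_a$, or into $C_{a-1}$ within $[\beta_{a-1},\alpha_b]$ (the straddling possibility $\alpha_c=\alpha_a,\beta_c=\beta_{a-1}$ is impossible, as $\beta_c=\beta_{a-1}$ would force $c=a-1$ and then $\alpha_c=\alpha_{a-1}\ne\alpha_a$). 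I expect the remaining work to be the delicate part: iterating this rigidity once around the $\sigma$-cycle packs a strictly ordered family of crown pairs $(\alpha_j,\beta_j)$ into a single maximal chain, which one then contradicts using the cyclic incidence structure of the crown (cyclically adjacent $\alpha$'s, and cyclically adjacent $\beta$'s, are incomparable, so they cannot all lie in one chain). Making this precise requires keeping careful track of which chain ($C_a$ or $C_{a-1}$) each successive crown pair enters; the monovariant $h$ and the constraints $\sigma(k)\notin\{k-1,k,k+1\}$ are the tools I would use to keep this under control.
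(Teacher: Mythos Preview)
Your setup is correct and begins in the same place as the paper: both arguments suppose that every $D_k=(-\infty,\alpha_k]_{C_k}\cup[\beta_{k-1},\infty)_{C_{k-1}}$ equals some $C_{\sigma(k)}$ and then follow the orbit $k,\sigma(k),\sigma^2(k),\dots$. Your exclusions $\sigma(k)\notin\{k-1,k,k+1\}$, the dichotomy (I)/(II), and the monovariant $h_k=\lvert(-\infty,\alpha_k]_{C_k}\rvert$ are all valid; the monovariant is a pleasant organizing device that the paper does not use.

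But the proof has a genuine gap, which you yourself flag. The sketched endgame---``iterating this rigidity around the $\sigma$-cycle packs a strictly ordered family of crown pairs into a single maximal chain''---does not go through as stated. Every case-(II) step replaces the upper part of the current chain by a segment of $C_{k-1}$, and the crown index $k-1$ need not lie on the $\sigma$-cycle at all; so successive pairs $(\alpha_{\sigma^t(k)},\beta_{\sigma^t(k)})$ are not confined to one chain, and the monovariant $h$ only records the \emph{length} of the lower prefix, not which chain that prefix sits in. Moreover, the incomparabilities you invoke (cyclically adjacent $\alpha$'s or $\beta$'s) refer to the crown indexing $1,\dots,\rho$, whereas the $\sigma$-cycle visits a proper subset of these indices in an unrelated order, so those incomparabilities do not conflict with anything you have established.

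The paper closes the argument by a different mechanism. Instead of locating a cycle and analyzing it, it proves directly that the orbit $n_1,n_2,\dots$ with $n_{i+1}=\sigma(n_i)$ is injective, contradicting finiteness. The inductive step fixes $j\le i-1$ and looks at the largest element $\alpha$ up to which \emph{all} of $C_{n_j},\dots,C_{n_i}$ agree; since consecutive chains $C_{n_h},C_{n_{h+1}}$ branch precisely at $\alpha_{n_h}$, one has $\alpha=\alpha_{n_l}$ for some $l\in\{j,\dots,i-1\}$. Comparing $\alpha_{n_l}$ with $\alpha_{n_i}$, one reads off that the cover of $\alpha_{n_l}$ (resp.\ $\alpha_{n_i}$) in $C_{n_j}$ is $\beta_{n_l}$ (resp.\ $\beta_{n_i}$), while in $C_{n_{i+1}}=D_{n_i}$ it is $\beta_{n_l-1}$ (resp.\ $\beta_{n_i-1}$); hence $C_{n_{i+1}}\neq C_{n_j}$. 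The crucial datum here is not the height of the common prefix but \emph{which $\beta$} sits immediately above it---exactly the information your monovariant discards. If you want to salvage your approach, this is the missing ingredient you would need to reinstate.
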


\begin{proof}
Suppose not.
Let $n_1=1$ and we define the infinite sequence $(n_i)_{i \ge 1}$ of elements of $\{1, \dots, \rho\}$ recursively as
$$C_{n_{i+1}}=(-\infty, \alpha_{n_i}]_{C_{n_i}} \cup [\beta_{n_i-1}, \infty)_{C_{n_i-1}}$$
(if $n_{i+1}$ is not unique, choose any one of them).
Since $\beta_{n_i-1} \neq \beta_{n_i}$, we have $C_{n_{i+1}} \neq C_{n_i}$.

Let $i \ge 2$ and suppose $C_{n_i} \neq C_{n_h}$ for all $h \in \{1, \dots, i-1\} $.
For $j \in \{1, \dots, i-1\}$, let $\alpha \in \bigcap_{j \le h \le i} C_{n_h}$ be the largest element satisfying
$$(-\infty, \alpha]_{C_{n_{h_1}}}=(-\infty, \alpha]_{C_{n_{h_2}}} \ \text{for all} \ h_1, h_2 \in \{j, \dots,  i\}$$
(such $\alpha$ exists because the minimum element of every $C_{n_h}$ is the same).
Then $\alpha=\alpha_{n_l}$ for some $l \in \{j, \dots, i-1\}$.
\begin{itemize}
 \item If $\alpha_{n_l} < \alpha_{n_i}$, then we have $\beta_{n_l} \in C_{n_j}$ and $\beta_{n_l-1} \in C_{n_i}$.
         Since $\beta_{n_l-1} \le \alpha_{n_i}$, we obtain $\beta_{n_l-1} \in C_{n_{i+1}}.$
 \item $\alpha_{n_l} = \alpha_{n_i}$ cannot happen because all the $\alpha_i$'s are different.
 \item If $\alpha_{n_l} > \alpha_{n_i}$, then we have $\beta_{n_i} \in C_{n_j}, \beta_{n_i-1} \in C_{n_{i+1}}$
\end{itemize}
Thus in either way, we have $C_{n_j} \neq C_{n_{i+1}}$.
Therefore, from the induction, all the $n_i$'s must be different, which is a contradiction because $\#\{1, \dots, \rho\}<\infty$.
\end{proof}

The following theorem is our main result.

\begin{theorem}\label{mainthm}
Let $\mathcal{C} \subset \mathrm{MaxChain}(P)$ be a nonempty subset.
Then $\mathrm{conv}(\mathcal{C})$ is not a face of $\mathscr{M}(P)$ if and only if $\mathcal{C}$ has an incomplete guided crown structure.
\end{theorem}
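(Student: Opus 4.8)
The plan is to route everything through Lemma \ref{lemma1}, turning ``$\mathrm{conv}(\mathcal{C})$ is a face'' into ``there is a weighting $f\in\R^P$ that is maximized on $\mathrm{MaxChain}(P)$ exactly along $\mathcal{C}$'', and then to make the two directions combinatorial: the ``if'' direction by a telescoping identity among indicator vectors of maximal chains, and the ``only if'' direction by feeding the non‑existence of $f$ through a theorem of the alternative and extracting a crown or star from the resulting relation.

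For the ``if'' direction, suppose $\mathcal{C}$ has an incomplete guided crown structure and, for contradiction, that $f$ satisfies Lemma \ref{lemma1}(2); then $f(C)$ equals a common value $m$ for all $C\in\mathcal{C}$, and $f(D)\le m$ for every $D\in\mathrm{MaxChain}(P)$ with equality exactly on $\mathcal{C}$. Given an incomplete $\rho$‑crown $W=(\alpha_1,\beta_1,\dots,\alpha_\rho,\beta_\rho)$, choose $C_j\in\mathcal{C}_{\alpha_j\beta_j}$ for all $j$, with $C_i,C_{i-1}$ the pair witnessing incompleteness. Writing $P_j=(-\infty,\alpha_j]_{C_j}$, $Q_j=[\beta_j,\infty)_{C_j}$, we have $C_j=P_j\sqcup Q_j$, and since $\alpha_j\lessdot\beta_{j-1}$ the set $D_j:=P_j\cup Q_{j-1}$ is again a maximal chain of $P$ (because $C_j,C_{j-1}$ are maximal and nothing lies strictly between $\alpha_j$ and $\beta_{j-1}$). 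The indices telescope: $\sum_j e_{D_j}=\sum_j e_{P_j}+\sum_j e_{Q_{j-1}}=\sum_j e_{P_j}+\sum_j e_{Q_j}=\sum_j e_{C_j}$ (using $\beta_0=\beta_\rho$, $C_0=C_\rho$). Applying $f$ gives $\sum_j f(D_j)=\rho m$ with each $f(D_j)\le m$, so $f(D_j)=m$ and hence $D_j\in\mathcal{C}$ for all $j$, contradicting $D_i=(-\infty,\alpha_i]_{C_i}\cup[\beta_{i-1},\infty)_{C_{i-1}}\notin\mathcal{C}$. The incomplete star case is the $\rho=2$ version with a shared element $\gamma$ in place of the covering edges: $D=(-\infty,\gamma]_{C_1}\cup(\gamma,\infty)_{C_2}$ and $D'=(-\infty,\gamma]_{C_2}\cup(\gamma,\infty)_{C_1}$ are maximal chains with $e_D+e_{D'}=e_{C_1}+e_{C_2}$, again forcing $D,D'\in\mathcal{C}$.

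For the ``only if'' direction I would argue the contrapositive: assuming $\mathcal{C}$ has no incomplete guided crown structure, construct $f$. Fix $C_0\in\mathcal{C}$; such an $f$ is precisely a solution of the linear system $f(C)=f(C_0)$ ($C\in\mathcal{C}$), $f(D)<f(C_0)$ ($D\in\mathrm{MaxChain}(P)\setminus\mathcal{C}$). By a theorem of the alternative this system is infeasible iff there are reals $\mu_C$ ($C\in\mathcal{C}$) and $\lambda_D\ge 0$ ($D\notin\mathcal{C}$), not all $\lambda_D$ zero, with $\sum_C\mu_C=\sum_D\lambda_D$ and $\sum_C\mu_Ce_C=\sum_D\lambda_De_D$. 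Separating positive and negative coefficients, such a ``balanced relation'' can be written $\sum_i a_ie_{A_i}=\sum_j b_je_{B_j}$ with all $a_i,b_j>0$, $\sum_i a_i=\sum_j b_j$, every $B_j\in\mathcal{C}$, at least one $A_i\notin\mathcal{C}$, and no chain on both sides; among all relations of this shape choose one with the fewest chains. It then remains to show this minimal relation forces an incomplete crown or star in $\mathcal{C}$.

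The crux is that last step. Coordinatewise the relation says that at every $x\in P$ the total $a$‑weight of the $A_i$ through $x$ equals the total $b$‑weight of the $B_j$ through $x$; viewing each maximal chain as a saturated directed path from a minimal to a maximal element, it is a balance of two nonnegative path flows with equal throughput at every vertex. Starting from an excluded chain $A_{i_0}$ and following it upward, at the first vertex $\alpha$ where it parts from a $B$‑chain agreeing with it near the bottom one reads off a covering pair $\alpha\lessdot\beta$ (the step of that $B$‑chain) and a distinct $\beta''$ with $\alpha\lessdot\beta''$ (the step of $A_{i_0}$); minimality of the relation should force these local data, continued cyclically around the chains occurring in the relation, to assemble into a guided $\rho$‑crown of $\mathcal{C}$ — or, when the assembly fails to close up and passes through a common element instead, a guided star — with $A_{i_0}$ appearing as one of the chains produced by the completeness operation. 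Since $A_{i_0}\notin\mathcal{C}$, this crown (resp. star) is incomplete. I expect proving precisely that a minimal balanced relation is the ``difference of the two sides'' of a single crown or star operation — controlling the branchings and re‑mergings of the chains and ruling out smaller sub‑relations — to be the main technical obstacle; the telescoping identity above and the polyhedral duality step are routine by comparison.
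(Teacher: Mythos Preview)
Your ``if'' direction is correct and is exactly the paper's argument: the telescoping identity $\sum_j e_{C_j}=\sum_j e_{D_j}$ forces every $D_j$ to attain the maximum of $f$, contradicting incompleteness.

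The ``only if'' direction, however, has a genuine gap. The duality step is fine: if no $f$ as in Lemma~\ref{lemma1}(2) exists, Motzkin's transposition theorem does yield a balanced relation $\sum_i a_i e_{A_i}=\sum_j b_j e_{B_j}$ with $B_j\in\mathcal{C}$ and some $A_{i_0}\notin\mathcal{C}$. What you have not done---and what you yourself flag as ``the main technical obstacle''---is show that a minimal such relation is the telescoping relation of a single crown or star. Your sketch (``follow $A_{i_0}$ upward, find where it parts from a $B$-chain, read off a covering, continue cyclically'') does not control why the process closes into a cycle rather than branching, nor why the guides $C_i\in\mathcal{C}_{\alpha_i\beta_i}$ emerge with the required covering relations. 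Concretely, for $P_3$ and $\mathcal{C}_3=\{146,257,38\}$ the relevant relation is
\[
e_{17}+e_{246}+e_{357}+e_{148}+e_{256}=2e_{146}+2e_{257}+e_{38},
\]
with non-unit coefficients on the $\mathcal{C}$-side; extracting the $5$-crown $(1,4,2,5,3,8,4,6,5,7)$ from this---with $C_1=C_4=146$ and $C_2=C_5=257$ repeated---is not at all immediate from ``follow a chain upward''. Minimality in the number of distinct chains does not obviously force the shape you need.

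The paper does not go through duality at all. It argues directly: among all $f$ with $f(C)\ge f(D)$ for $C\in\mathcal{C}$, $D\in\mathrm{MaxChain}(P)$, pick $\tilde f$ with $\#\Omega(\tilde f)$ minimal, pick $\tilde D\in\Omega(\tilde f)\setminus\mathcal{C}$, and locate the first covering $\alpha\lessdot\beta$ along $\tilde D$ at which the initial segment $(-\infty,\alpha]_{\tilde D}$ is still contained in some $C\in\mathcal{C}$ but $(-\infty,\beta]_{\tilde D}$ is not. If some $C\in\mathcal{C}$ contains both $\alpha$ and $\beta$, an incomplete star is read off immediately. Otherwise the paper runs an explicit zigzag propagation: labels $\chi^i_x(\alpha,\pm1)$, $\chi^i_x(\beta,\pm1)$ record reachability from $\alpha$ (resp.\ $\beta$) by alternating ``any covering'' and ``covering inside a chain of $\mathcal{C}$''. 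Minimality of $\#\Omega(\tilde f)$ forces the $\alpha$- and $\beta$-labels to collide at some vertex (else one could perturb $\tilde f$ to strictly shrink $\Omega(\tilde f)$), and tracing back the two reachability chains to that collision vertex produces an incomplete $\rho$-crown whose excluded edge is $\alpha\lessdot\beta$. This constructive, potential-minimizing argument is what replaces the step you left open.
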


\begin{proof}
``If" part:\ 
Let $W=(\alpha_1, \beta_1, \dots, \alpha_{\rho}, \beta_{\rho})$ be an incomplete $\rho$-crown and fix $C_i \in \mathcal{C}_{\alpha_i\beta_i}$ for all $i$.
Since $W$ is incomplete, we may assume that $D_1:=(-\infty, \alpha_1]_{C_1} \cup [\beta_{\rho}, \infty)_{C_{\rho}}$ is not an element of $\mathcal{C}$.
Suppose that a function $f \in \R^P$ satisfy
$ f(C) \ge f(D)$ for every $C \in \mathcal{C}, D \in \mathrm{MaxChain}(P)$.
Then 
$$ \sum_{i=1}^\rho f(C_i)=\sum_{i=1}^\rho f((-\infty, \alpha_i]_{C_i} \cup [\beta_i, \infty)_{C_i})=\sum_{i=1}^\rho f((-\infty, \alpha_i]_{C_i} \cup [\beta_{i-1}, \infty)_{C_{i-1}}),$$
so every maximal chain $(-\infty, \alpha_i]_{C_i} \cup [\beta_{i-1}, \infty)_{C_{i-1}}$ is also taking the maximum value $f(C)$.
Thus, $D_1$ is a maximal chain attaining the maximum value and is not an element of $\mathcal{C}$.
Therefore, $\mathrm{conv}(\mathcal{C})$ is not a face of $\mathscr{M}(P)$ from Lemma \ref{lemma1}.
The case where $\mathcal{C}$ has an incomplete star can be shown similarly.

``Only if'' part:\
Let
$$\mathcal{F}=\{f \in \R^P \mid f(C) \ge f(D),\ \forall C \in \mathcal{C},\ \forall D \in \mathrm{MaxChain}(P)\}.$$
Note that $\mathcal{F}$ is nonempty since it contains a zero function.
For a function $f \in \mathcal{F}$, we define
$$\Omega(f)=\{D \in \mathrm{MaxChain}(P) \mid f(D) \ge f(D'),\ \forall D' \in \mathrm{MaxChain}(P)\}.$$
Since $\mathrm{conv}(\mathcal{C})$ is not a face, every element $f$ of $\mathcal{F}$ satisfies $\Omega(f) \supsetneq \mathcal{C}$ from Lemma \ref{lemma1}.
Among all the elements of $\mathcal{F}$, let $\tilde{f}$ satisfy $\#\Omega(\tilde{f}) \le \#\Omega(f)$ for every $f \in \mathcal{F}$,
and fix $\tilde{D} \in \Omega(\tilde{f}) \setminus \mathcal{C}$.

\begin{claim}
$$\tilde{D} \subset \bigcup_{C \in \mathcal{C}} C$$
\end{claim}

\begin{proof}[Proof of Claim A]
Suppose there exists $x_0 \in \tilde{D} \setminus \bigcup_{C \in \mathcal{C}} C$.
Define $\tilde{f}_0 \in \R^P$ as
$$\tilde{f}_0(x) = \left\{
\begin{array}{ll}
\tilde{f}(x_0)-1 & \text{if}\ x=x_0, \\
\tilde{f}(x) & \text{otherwise}.
\end{array}
\right.$$
Then $\tilde{f}_0$ is an element of $\mathcal{F}$ satisfying $\mathcal{C} \subset \Omega(\tilde{f}_0) \subset \Omega(\tilde{f}) \setminus \{\tilde{D}\}$
because $\tilde{f}_0(\tilde{D})=\tilde{f}(\tilde{D})-1$.
This contradicts the minimality of $\#\Omega(\tilde{f})$.
\end{proof}

Take a covering $\alpha \lessdot \beta$ of $\tilde{D}$ such that
\begin{itemize}
 \item there exists an element $C_1$ of $\mathcal{C}$  such that $(-\infty, \alpha]_{\tilde{D}} \subset C_1$,
 \item $(-\infty, \beta]_{\tilde{D}} \not\subset C$ for every $C \in \mathcal{C}$.
\end{itemize}

\begin{claim}
If there exists an element $C_2$ of $\mathcal{C}$ which contains both $\alpha$ and $\beta$, then $\mathcal{C}$ has an incomplete crown structure.
\end{claim} 

\begin{proof}[Proof of Claim B]
Since 
$$(-\infty, \alpha]_{C_2} \neq (-\infty, \alpha]_{\tilde{D}} = (-\infty, \alpha]_{C_1},$$
suppose $\alpha_2 \in C_2$ satisfy $[\alpha_2, \alpha]_{C_2} \not\subset C_1$ and $(\alpha_2, \alpha]_{C_2} \subset C_1$, and let $\gamma \in C_2$ be the covering of $\alpha_2$.
Then $\gamma$ is also an element of $C_1$.
Since $C_1$ is a maximal chain, there exists an element $\alpha_1$ of $C_1$ which is covered by $\gamma$, and an element $\beta_1$ of $C_1$ which covers $\alpha$.
Then $(\alpha_1, \beta_1, \alpha_2, \beta)$ is a star.
Moreover, $(-\infty, \gamma]_{C_1} \cup (\gamma, \infty)_{C_2} \not\in \mathcal{C}$ holds since no elements of $\mathcal{C}$ contain $(-\infty, \beta]_{\tilde{D}}$.
\end{proof}

Thus, we shall assume that no elements of $\mathcal{C}$ contain both $\alpha$ and $\beta$ after on.
Let $\Lambda=\{\alpha, \beta \} \times \{-1, 1\}$.
We assign the sequence $(\chi_x^i)_{i \ge 0}$ of 4-bit binary numbers to each element $x$ of $P$, where $\chi_x^i \in \{0, 1\}^\Lambda$, defined recursively as follows:
\begin{eqnarray*}
\chi_x^0(s, t ) &=& \left\{
\begin{array}{ll}
1 & \textrm{if}\ (x, s, t)=(\alpha, \alpha , -1)\ \textrm{or} \ (\beta, \beta, 1), \\
0 & \textrm{otherwise},
\end{array}
\right.\\
\chi_x^{i+1}(\alpha, -1) &=& \left\{
\begin{array}{ll}
1 & \textrm{if $\chi_x^i(\alpha, -1)=1$ or} \\
  &  \textrm{there exists a covering $x \lessdot y$ s.t. $\chi_y^i(\alpha, 1)=1$}, \\
0 & \textrm{otherwise},
\end{array}
\right.\\
\chi_x^{i+1}(\alpha, 1) &=& \left\{
\begin{array}{ll}
1 & \textrm{if $\chi_x^i(\alpha, 1)=1$ or there exists $C \in \mathcal{C}_x$} \\
  &  \textrm{and a covering $w \lessdot_C x$ s.t. $\chi_w^i(\alpha, -1)=1$}, \\
0 & \textrm{otherwise},
\end{array}
\right. \\
\chi_x^{i+1}(\beta, -1) &=& \left\{
\begin{array}{ll}
1 & \textrm{if $\chi_x^i(\beta, -1)=1$ or there exists $C \in \mathcal{C}_x$} \\
  &  \textrm{and a covering $x \lessdot_C y$ s.t. $\chi_y^i(\beta, 1)=1$}, \\
0 & \textrm{otherwise},
\end{array}
\right.\\
\chi_x^{i+1}(\beta, 1) &=& \left\{
\begin{array}{ll}
1 & \textrm{if $\chi_x^i(\beta, 1)=1$ or} \\
  &  \textrm{there exists a covering $w \lessdot x$ s.t. $\chi_w^i(\beta, -1)=1$}, \\
0 & \textrm{otherwise}.
\end{array}
\right.
\end{eqnarray*}

\begin{remark}
$\chi_x^i(s, t)=1$ means that $x$ is reachable within $i$ zigzag coverings starting from $s$ (the 0-th covering is $\alpha \lessdot \beta$),
with the covering of an element of $\mathcal{C}$ and any covering alternately repeating.
If $t=1$, then $x$ is the larger element of the zigzag, and $t=-1$ means that $x$ is the smaller element of the zigzag.
\end{remark}

\begin{claim}
There exists $x \in P$ satisfying
$$\chi_x^l(\alpha, -1)=\chi_x^l(\beta, -1)=1 \quad \textrm{or} \quad  \chi_x^l(\alpha, 1)=\chi_x^l(\beta, 1)=1$$
for some $l$.
\end{claim}

\begin{proof}[Proof of Claim C]
Suppose not.
If $\chi_x^i(s, t)=1$, then $\chi_x^{i+1}(s, t)=\chi_x^{i+2}(s, t)=\dots=1$.
Thus, there exists $m \ge 1$ such that $(\chi_x^i)$ is constant after on, i.e. $\chi_x^m(s, t)=\chi_x^{m+1}(s, t)=\dots$ for all $(x, s, t) \in P \times \Lambda$.

Let $C \in \mathcal{C}$.
If $x \in C$ satisfy $\chi_x^m(s, 1)=1$ for some $s \in \{\alpha, \beta\}$,
then either from the construction of $(\chi_x^i)$ or $\alpha \lessdot \beta$, $x$ covers an element of $P$.
Since $C$ is a maximal chain, there exists $w \in C$ which is covered by $x$.
Hence, we have
$$\chi_w^m(s, -1)=\chi_w^{m+1}(s, -1)=1\ (=\chi_x^m(s, 1)).$$
Similarly, if $\chi_x^m(s, -1)=1$, then there exists an element $y \in C$ which covers $x$, and we have $\chi_y^m(s, 1)=1$.
Thus, we have
\begin{eqnarray*}
\#\{x \in C  \mid \chi_x^m(\alpha, -1)=1\} &=& \#\{x \in C \mid \chi_x^m(\alpha, 1)=1\},\\
\#\{x \in C  \mid \chi_x^m(\beta, -1)=1\} &=& \#\{x \in C \mid \chi_x^m(\beta, 1)=1\}.
\end{eqnarray*}

Now consider $D \in \mathrm{MaxChain}(P)$.
Suppose $x \in D$ satisfy $\chi_x^m(\alpha, 1)=1$.
Then there exists $w \in D$ which is covered by $x$.
Hence, we have
$$\chi_w^m(\alpha, -1)=\chi_w^{m+1}(\alpha, -1)=1 \ (=\chi_x^m(\alpha, 1)).$$
Similarly, if $\chi_x^m(\beta, -1)=1$, then there exists an element $y \in D$ which covers $x$,
and we have $\chi_w^m(\beta, 1)=1$.
Thus, we have
\begin{eqnarray*}
\#\{x \in D  \mid \chi_x^m(\alpha, -1)=1\} &\ge& \#\{x \in D \mid \chi_x^m(\alpha, 1)=1\},\\
\#\{x \in D  \mid \chi_x^m(\beta, -1)=1\} &\le& \#\{x \in D \mid \chi_x^m(\beta, 1)=1\}.
\end{eqnarray*}
Moreover, these inequalities must be strict on $\tilde{D}$ because $\chi_\alpha^m(\beta, -1)=\chi_\beta^m(\alpha, 1)=0$ from the assumption.

Define $\tilde{f}' \in \R^P$ as 
$$\tilde{f}'(x)=\tilde{f}(x)-\chi_x^m(\alpha, -1)+\chi_x^m(\alpha, 1)+\chi_x^m(\beta, -1)-\chi_x^m(\beta, 1).$$
Then we have 
\begin{itemize}
 \item $\tilde{f}'(C)=\tilde{f}(C)$ for every $C \in \mathcal{C}$, 
 \item $\tilde{f}'(D) \le \tilde{f}(D)$ for every $D \in \mathrm{MaxChain}(P)$, 
 \item $\tilde{f}'(\tilde{D})<\tilde{f}(\tilde{D})$. 
\end{itemize}
Therefore, $\tilde{f}'$ is an element of $\mathcal{F}$ satisfying $\#\Omega(\tilde{f}')<\#\Omega(\tilde{f})$, which contradicts to the minimality of $\#\Omega(\tilde{f})$.
\end{proof}

Define $l$ as
\begin{multline*}
l=\min\{ i \in \Z_{> 0} \mid \chi_x^i(\alpha, -1)=\chi_x^i(\beta, -1)=1 \ \text{or} \\
\chi_x^i(\alpha, 1)=\chi_x^i(\beta, 1)=1, \ \exists x \in P\}.
\end{multline*}
Without loss of generality, let $\alpha' \in P$ satisfy $\chi_{\alpha'}^l(\alpha, -1)=\chi_{\alpha'}^l(\beta, -1)=1$.

Let
$$i_1=\min \{i \in \{1, \dots, l\} \mid \chi_{\alpha'}^i(\beta, -1)=1\}.$$
Then there exists $C_1 \in \mathcal{C}_{\alpha'}$ and $\beta_1 \in C_1$ satisfying $\alpha' \lessdot_{C_1} \beta_1$ and $\chi_{\beta_1}^{i_1-1}(\beta, 1)=1$.
Let
$$j_1=\min \{j \in \{1, \dots, i_1-1\} \mid \chi_{\beta_1}^j(\beta, 1)=1\}.$$
Then there exists $\alpha_2 \in P$ satisfying $\alpha_2 \lessdot \beta_1$ and $\chi_{\alpha_2}^{j_1-1}(\beta, -1)=1$.
Thus, we obtain a covering sequence $\alpha' \lessdot_{C_1} \beta_1 \gtrdot \alpha_2$.

By repeating this procedure starting from $\alpha_2$ and so on, we will eventually reach $\beta$.
We then obtain a covering sequence 
$$ \alpha' \lessdot_{C_1} \beta_1 \gtrdot \alpha_2 \lessdot_{C_2} \beta_2 \gtrdot \alpha_3 \lessdot_{C_3} \dots \gtrdot \alpha_k\lessdot_{C_k} \beta_k=\beta.$$
such that $\alpha', \alpha_2, \dots, \alpha_k$ are all different, and $\beta_1, \beta_2, \dots, \beta_{k-1}, \beta$ are also all different.

On the other hand, starting from $\chi_{\alpha'}^l(\alpha, -1)=1$, we can similarly obtain a covering sequence
$$ \alpha' \lessdot \beta'_1 \gtrdot_{C'_1} \alpha'_1 \lessdot \beta'_2 \gtrdot_{C'_2} \alpha'_2 \lessdot \beta'_3 \gtrdot_{C'_3} \dots \lessdot \beta'_{k'} \gtrdot_{C'_{k'}} \alpha'_{k'}=\alpha.$$
such that $\alpha', \alpha'_2, \dots, \alpha'_{k'-1}, \alpha$ are all different, and $\beta'_1, \beta'_2, \dots, \beta'_{k'}$ are also all different.
Moreover, $\alpha_i \neq \alpha'_j$ and $\beta_i \neq \beta'_j$ both holds for every $i, j$ because of the minimality of $l$.

Thus, $\mathcal{C}$ has a crown $(\alpha, \beta'_{k'}, \dots, \alpha'_1, \beta'_1, \alpha', \beta_1, \dots, \alpha_k, \beta)$ because
$$ \alpha  \lessdot_{C'_{k'}} \beta'_{k'} \gtrdot \dots \gtrdot \alpha'_1 \lessdot_{C'_1} \beta'_1 \gtrdot \alpha' \lessdot_{C_1} \beta_1 \gtrdot \dots \gtrdot \alpha_k \lessdot_{C_k} \beta \gtrdot \alpha. $$
Also, $(-\infty, \alpha]_{C'_{k'}} \cup [\beta, \infty)_{C_k}$ is a maximal chain not contained in $\mathcal{C}$ because it contains both $\alpha$ and $\beta$.
Therefore, $\mathcal{C}$ has an incomplete crown structure.
\end{proof}

%

\begin{theorem}\label{mainthm2}
 Let $\mathcal{C}=\{C_1, \dots, C_m\}$ be a nonempty subset of $\mathrm{MaxChain}(P)$.
\begin{enumerate}
 \item $\mathrm{conv}(\mathcal{C})$ is an $(m-1)$-simplex and is a face of $\mathscr{M}(P)$ if and only if $\mathcal{C}$ does not have a guided crown structure.
 \item $\mathrm{conv}(\mathcal{C})$ is not a simplex but is a face of $\mathscr{M}(P)$ if and only if $\mathcal{C}$ has a complete guided crown structure.
 \item $\mathrm{conv}(\mathcal{C})$ is not a face of $\mathscr{M}(P)$ if and only if $\mathcal{C}$ has an incomplete guided crown structure.
\end{enumerate}
\end{theorem}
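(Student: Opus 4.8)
The plan is to deduce the three statements from Theorem~\ref{mainthm} together with two supplementary facts, exploiting that both sides of the claimed equivalences form exhaustive, mutually exclusive trichotomies. \emph{Preliminary observation.} For distinct maximal chains $C_1,\dots,C_m$, no $e_{C_i}$ is a convex combination of the others: if $e_{C_m}=\sum_{i<m}\lambda_i e_{C_i}$ with $\lambda_i\ge 0$ and $\sum\lambda_i=1$, then reading a coordinate on which $e_{C_m}$ equals $1$ (resp.\ $0$) forces every $C_i$ with $\lambda_i>0$ to contain (resp.\ to avoid) the corresponding element, so $C_i=C_m$. Hence the $e_{C_i}$ are exactly the vertices of $\mathrm{conv}(\mathcal{C})$, and the conditions ``$\mathrm{conv}(\mathcal{C})$ is a simplex'', ``$\mathrm{conv}(\mathcal{C})$ is an $(m-1)$-simplex'', and ``$e_{C_1},\dots,e_{C_m}$ are affinely independent'' coincide. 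In particular the three geometric alternatives in (1)--(3) are mutually exclusive and jointly exhaustive, as are the three combinatorial alternatives ``no guided crown structure'' / ``complete guided crown structure'' / ``incomplete guided crown structure''. It therefore suffices to prove, for each line, that the combinatorial alternative implies the geometric one: each implication is then automatically an equivalence, since a geometric alternative forces some combinatorial alternative, which in turn forces its geometric partner, necessarily the one we started from. For (3) this implication is precisely Theorem~\ref{mainthm}.

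\emph{Statement (2).} If $\mathcal{C}$ has a complete guided crown structure it has no incomplete one, so $\mathrm{conv}(\mathcal{C})$ is a face by Theorem~\ref{mainthm}, and it remains to show it is not a simplex. Pick a complete guided crown $W=(\alpha_1,\beta_1,\dots,\alpha_\rho,\beta_\rho)$ of $\mathcal{C}$ and chains $C_i\in\mathcal{C}_{\alpha_i\beta_i}$, and set $D_i=(-\infty,\alpha_i]_{C_i}\cup[\beta_{i-1},\infty)_{C_{i-1}}$. Each $D_i$ is a maximal chain (glued at the cover $\alpha_i\lessdot\beta_{i-1}$) and lies in $\mathcal{C}$ by completeness. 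From $C_i=(-\infty,\alpha_i]_{C_i}\sqcup[\beta_i,\infty)_{C_i}$ and $D_i=(-\infty,\alpha_i]_{C_i}\sqcup[\beta_{i-1},\infty)_{C_{i-1}}$ we get, summing over $i$, $\sum_{i=1}^\rho e_{C_i}=\sum_{i=1}^\rho e_{D_i}$; by Proposition~\ref{prop} some $D_k$ differs from every $C_i$, so the coefficient of $e_{D_k}$ in $\sum_i e_{C_i}-\sum_i e_{D_i}$ is negative, whence $(e_C)_{C\in\mathcal{C}}$ is affinely dependent and $\mathrm{conv}(\mathcal{C})$ is not a simplex. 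A complete guided star $(\alpha_1,\beta_1,\alpha_2,\beta_2)$ with witness $\gamma$ is handled the same way via $e_{C_1}+e_{C_2}=e_{D_1}+e_{D_2}$, where $D_1=(-\infty,\gamma]_{C_1}\cup(\gamma,\infty)_{C_2}$ and $D_2=(-\infty,\gamma]_{C_2}\cup(\gamma,\infty)_{C_1}$ both lie in $\mathcal{C}$, and the incomparabilities $\alpha_1\not\lesseqgtr\alpha_2$, $\beta_1\not\lesseqgtr\beta_2$ force $C_1\notin\{D_1,D_2\}$. Thus a complete guided crown structure yields a non-simplex face.

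\emph{Statement (1).} After Theorem~\ref{mainthm}, what remains is: if $\mathcal{C}$ has no guided crown structure, then $e_{C_1},\dots,e_{C_m}$ are affinely independent. I would prove the contrapositive: from a nontrivial affine dependence among the $e_C$, extract a guided crown or star of $\mathcal{C}$. Pass to a minimal such dependence $\sum_{i=1}^k c_i e_{C_i}=0$ with $\sum_i c_i=0$ and all $c_i\ne0$ (legitimate, since a crown or star of a subset of $\mathcal{C}$ is one of $\mathcal{C}$); then $\sum_{i:\,x\in C_i}c_i=0$ for every $x\in P$. Starting from a covering pair $\alpha_1\lessdot\beta_1$ with $w(\alpha_1,\beta_1):=\sum_{i:\,\alpha_1,\beta_1\in C_i}c_i>0$ --- realized by some $C_1\in\mathcal{C}_{\alpha_1\beta_1}$ --- these balancing identities let one run an alternating zigzag of coverings: up-steps $\alpha\lessdot\beta$ witnessed by members of $\mathcal{C}$, down-steps plain $P$-coverings of opposite weight-sign, exactly as in the ``only if'' part of Theorem~\ref{mainthm}. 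By finiteness the zigzag closes, and the resulting closed zigzag --- with the distinctness of the $\alpha_i$ and of the $\beta_i$ read off along the walk --- is a guided $\rho$-crown of $\mathcal{C}$. The degenerate case, where the weighting $w$ vanishes identically (this genuinely occurs, e.g.\ for the maximal chains of $\bm{3}\times\bm{3}$), is instead resolved by locating two chains $C_1,C_2$ and a common element $\gamma$ at which swapping the parts of $C_1$ and $C_2$ below and above $\gamma$ produces a maximal chain outside $\mathcal{C}$, yielding a guided star. This establishes ``no guided crown structure $\Rightarrow$ affinely independent'', and with it all three implications, hence the theorem.

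The crux, and the step I expect to be most delicate, is this last extraction: turning an arbitrary minimal affine relation among maximal-chain indicator vectors into a bona fide guided crown or star --- keeping the alternation correct, closing the zigzag without collapsing the required distinctness of the $\alpha_i$ and of the $\beta_i$, and disposing of the star-producing degenerate case. This re-enacts, in a dual guise, the technical heart of the ``only if'' direction of Theorem~\ref{mainthm}, the new input being that the ``extra'' maximal chain outside $\mathcal{C}$ is now manufactured from a linear dependence rather than from a minimizer of $\#\Omega$.
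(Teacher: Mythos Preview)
Your trichotomy setup and your ``if'' argument for (2) are correct and coincide with the paper's. The divergence is in how you establish that an affine dependence among the $e_{C_i}$ forces a guided crown structure --- equivalently, the ``only if'' direction of (2), or your direct attack on (1). The paper handles this in one stroke by \emph{reusing} Theorem~\ref{mainthm}: if $\mathrm{conv}(\mathcal{C})$ is not a simplex, pick $k$ with $e_{C_{k+1}}$ in the affine hull of $e_{C_1},\dots,e_{C_k}$; then any supporting hyperplane through $e_{C_1},\dots,e_{C_k}$ also contains $e_{C_{k+1}}$, so $\mathrm{conv}(C_1,\dots,C_k)$ is \emph{not} a face of $\mathscr{M}(P)$, and Theorem~\ref{mainthm} applied to the proper subfamily $\{C_1,\dots,C_k\}$ produces an incomplete crown or star --- which is a fortiori a crown structure of $\mathcal{C}$. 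No new zigzag is built; the hard combinatorics was already paid for in Theorem~\ref{mainthm}.

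Your direct approach to (1), by contrast, has a real gap. The non-degenerate zigzag can probably be made rigorous (extract a shortest closed alternating walk to secure distinctness of the $\alpha_i$ and of the $\beta_i$), but your degenerate case $w\equiv 0$ is not a proof: you assert one can locate $C_1,C_2,\gamma$ whose swap lands outside $\mathcal{C}$ ``yielding a guided star,'' yet you neither explain how the affine relation produces such a triple nor verify the incomparabilities $\alpha_1\not\lesseqgtr\alpha_2$ and $\beta_1\not\lesseqgtr\beta_2$ that a guided star requires. (Also, for (1) you only need \emph{some} crown structure, so insisting the swapped chain leave $\mathcal{C}$ is beside the point.) All of this evaporates once you observe that Theorem~\ref{mainthm} already does the work on the subfamily.
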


\begin{proof}
(3) is already shown in Theorem \ref{mainthm}. We will show (2).

``If" part:\
We already know that $\mathrm{conv}(\mathcal{C})$ is a face of $\mathscr{M}(P)$.
Suppose that $\mathcal{C}$ contains a $\rho$-crown $W=(\alpha_1, \beta_1, \alpha_2, \beta_2, \dots, \alpha_\rho, \beta_\rho)$,
and fix $C_i \in \mathcal{C}_{\alpha_i\beta_i}$ for each $i$.
Let $D_i:=(-\infty, \alpha_i]_{C_i} \cup [\beta_i, \infty)_{C_{i-1}}$.
Then from the assumption, every $D_i$ is an element of $\mathcal{C}$.
Also from Proposition \ref{prop}, we have $D_k \not\in \{C_i\}_{1 \le i \le \rho}$ for some $k$.
Since $\sum_{1 \le i \le \rho} e_{C_i}=\sum_{1 \le i \le \rho} e_{D_i}$ holds, $\mathrm{conv}(\mathcal{C})$ is not a simplex.
The case that $\mathcal{C}$ has a star can be shown similarly.

``Only if" part:\
Since $\mathrm{conv}(\mathcal{C})$ is not a simplex, we have
$$ \dim \mathrm{conv}(C_1, \dots, C_k) =\dim \mathrm{conv}(C_1, \dots, C_{k+1}) $$
 for some $k$.
Then the affine hull of $\{e_{C_i}\}_{1 \le i \le k}$ must contain $e_{C_{k+1}}$.
Thus, every supporting hyperplane containing $\{e_{C_i}\}_{1 \le i \le k}$ also contains $e_{C_{k+1}}$.
From Theorem \ref{mainthm}, $\{C_1, \dots, C_k\}$ has an incomplete crown structure, so $\mathcal{C}$ also has a crown structure.
Since $\mathrm{conv}(\mathcal{C})$ is a face, the crown structure must be complete.
\end{proof}

\section{Face Structure of Maximal Chain Polytopes}

We assume that $\R^p$ is a Euclidean space throughout this section.

We first see that in simple cases, the face structure of $\mathscr{M}(P)$ can be easily obtained from the definition.

\begin{example}
\begin{enumerate}
 \item The maximal chain polytope of an $n$-chain $\bm{n}:=\{1, \dots, n\}$ is a point because $\# \mathrm{MaxChain}(\bm{n})=1$. 
 \item The maximal chain polytope of an $n$-antichain $A_n$ is an $(n-1)$-simplex because
          $\mathscr{M}(A_n)$ is the convex hull of all the $e_i$'s \ $(1 \le i \le n)$.
\end{enumerate}
\end{example}

The following definitions are well-known.

\begin{definition}
Let $P_1, P_2$ be posets.
\begin{enumerate}
 \item The {\it disjoint union} of $P_1$ and $P_2$, denoted by $P_1 + P_2$, is defined as the union $P_1 \cup P_2$ with the ordering relation being
 $$ x \leq_{P_1 + P_2} y \Longleftrightarrow \begin{cases} x, y \in P_1 \ \text{and}\ x \le_{P_1} y,\ \text{or} \\ x, y \in P_2 \ \text{and} \ x \le_{P_2} y. \end{cases}$$
 \item The {\it ordinal sum} of $P_1$ and $P_2$, denoted by $P_1 \oplus P_2$, is defined as the union $P_1 \cup P_2$ with the ordering relation being
 $$ x \leq_{P_1 \oplus P_2} y \Longleftrightarrow \begin{cases} x, y \in P_1 \ \text{and}\ x \le_{P_1} y,\ \text{or} \\ x, y \in P_2 \ \text{and} \ x \le_{P_2} y,\ \text{or} \\ x \in P_1 \ \text{and}\ y \in P_2. \end{cases}$$
 \item The {\it direct product} of $P_1$ and $P_2$, denoted by $P_1 \times P_2$, is defined as the set $P_1 \times P_2=\{(x, y) \mid x \in P_1, y \in P_2\}$ with the ordering relation being
 $$ (x_1, y_1) \leq_{P_1 \times P_2} (x_2, y_2) \Longleftrightarrow x_1 \leq_{P_1} x_2 \ \text{and} \ y_1 \leq_{P_2} y_2.$$
 Denote $P^n$ for the direct product $P \times P \times \dots \times P$ ($n$ times).
\end{enumerate}
\end{definition}

\begin{lemma}
Let $P_1,\ P_2$ be posets.
\begin{enumerate}
 \item $\mathscr{M}(P_1+P_2)=\mathscr{M}(P_1) \oplus \mathscr{M}(P_2):=$ the convex hull of \begin{flushright}$\left\{ \begin{pmatrix} t_1 \\ 0 \end{pmatrix} \in \R^{p_1+p_2} \mid t_1 \in \mathscr{M}(P_1)\right\} \cup \left\{ \begin{pmatrix} 0 \\ t_2 \end{pmatrix} \in \R^{p_1+p_2} \mid t_2 \in \mathscr{M}(P_2)\right\}$ \end{flushright}
 \item $\mathscr{M}(P_1 \oplus P_2)=\mathscr{M}(P_1) \times \mathscr{M}(P_2)$
 \begin{flushright} $:=\left\{ \begin{pmatrix} t_1 \\ t_2 \end{pmatrix} \in \R^{p_1+p_2} \mid t_1 \in \mathscr{M}(P_1), t_2 \in \mathscr{M}(P_2) \right\}$ \end{flushright}
\end{enumerate}
\end{lemma}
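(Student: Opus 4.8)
The plan is to reduce each of the two polytope identities to a combinatorial description of $\mathrm{MaxChain}(P_1 + P_2)$ and $\mathrm{MaxChain}(P_1 \oplus P_2)$, after which the statements follow from elementary facts about convex hulls in a fixed orthogonal decomposition $\R^p = \R^{p_1} \times \R^{p_2}$.

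For (1): since no element of $P_1$ is comparable to any element of $P_2$ in $P_1 + P_2$, every chain lies entirely inside $P_1$ or entirely inside $P_2$; and a chain of $P_i$ that is maximal in $P_i$ remains maximal in $P_1 + P_2$ (nothing from the other summand can be adjoined). Hence $\mathrm{MaxChain}(P_1 + P_2) = \mathrm{MaxChain}(P_1) \sqcup \mathrm{MaxChain}(P_2)$, and for $C \in \mathrm{MaxChain}(P_i)$ the vector $e_C$ lies in the coordinate subspace corresponding to $P_i$. Since $\mathscr{M}(P_i) = \mathrm{conv}(\mathrm{MaxChain}(P_i))$ and the convex hull of a union of finite point sets is the convex hull of their union, taking $\mathrm{conv}$ of $\{e_C\}_{C \in \mathrm{MaxChain}(P_1+P_2)}$ gives exactly the asserted ``join'' $\mathscr{M}(P_1) \oplus \mathscr{M}(P_2)$.

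For (2): because every element of $P_1$ is strictly below every element of $P_2$ in $P_1 \oplus P_2$, a subset $C$ is a chain iff both $C \cap P_1$ and $C \cap P_2$ are chains, and $C$ is a \emph{maximal} chain iff $C \cap P_1 \in \mathrm{MaxChain}(P_1)$ and $C \cap P_2 \in \mathrm{MaxChain}(P_2)$ --- if, say, $C \cap P_1$ were not maximal in $P_1$, some $x \in P_1$ could be adjoined while retaining comparability with $C \cap P_2$, and $C \cap P_1$ cannot be empty unless $P_1$ is, since a minimal element of $P_1$ lies below all of $C \cap P_2 \subset P_2$. Thus $\mathrm{MaxChain}(P_1 \oplus P_2) = \{C_1 \cup C_2 \mid C_1 \in \mathrm{MaxChain}(P_1),\ C_2 \in \mathrm{MaxChain}(P_2)\}$ with $C_1 \cap C_2 = \emptyset$, so $e_{C_1 \cup C_2} = (e_{C_1}, e_{C_2})$ in $\R^{p_1} \times \R^{p_2}$. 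It then remains to observe the standard fact that $\mathrm{conv}\{(v, w) \mid v \in V, w \in W\} = \mathrm{conv}(V) \times \mathrm{conv}(W)$ for finite $V \subset \R^{p_1}$, $W \subset \R^{p_2}$: the inclusion $\subseteq$ is immediate, and conversely any $(t_1, t_2)$ with $t_i$ a convex combination of the vectors in the $i$-th set is recovered as the product convex combination $\sum_{i,j} \lambda_i \mu_j (v_i, w_j)$. Applying this with $V = \{e_{C_1}\}$ and $W = \{e_{C_2}\}$ yields $\mathscr{M}(P_1 \oplus P_2) = \mathscr{M}(P_1) \times \mathscr{M}(P_2)$.

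I do not anticipate a real obstacle: the argument is bookkeeping about where maximal chains live together with two routine convex-geometry facts. The only places deserving a careful sentence are the decomposition of maximal chains of $P_1 \oplus P_2$ into a disjoint union of maximal chains of the summands (in particular that neither part is empty when the corresponding summand is nonempty) and the identity $\mathrm{conv}(V \times W) = \mathrm{conv}(V) \times \mathrm{conv}(W)$; the degenerate case $P_i = \emptyset$ is covered by reading $\mathscr{M}(\emptyset)$ as the single point $0 \in \R^0$.
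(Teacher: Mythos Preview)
Your proposal is correct and follows essentially the same approach as the paper: both reduce the polytope identities to the combinatorial descriptions $\mathrm{MaxChain}(P_1+P_2)=\mathrm{MaxChain}(P_1)\cup\mathrm{MaxChain}(P_2)$ and $\mathrm{MaxChain}(P_1\oplus P_2)=\{C_1\cup C_2\mid C_i\in\mathrm{MaxChain}(P_i)\}$, after which the convex-hull statements are immediate. The paper simply asserts ``the lemma follows'' from these two equalities, whereas you have spelled out the supporting convex-geometry facts (in particular $\mathrm{conv}(V\times W)=\mathrm{conv}(V)\times\mathrm{conv}(W)$) and the maximality verification in more detail.
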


\begin{proof}
Since
\begin{align*}
\mathrm{MaxChain}(P_1+P_2) &= \mathrm{MaxChain}(P_1) \cup \mathrm{MaxChain}(P_2), \\
\mathrm{MaxChain}(P_1 \oplus P_2) &= \{ C_1 \cup C_2 \mid C_1 \in \mathrm{MaxChain}(P_1),\ C_2 \in \mathrm{MaxChain}(P_2)\}.
\end{align*}
holds, the lemma follows.
\end{proof}

\begin{example}
         Let $P_1$ be the poset we mentioned in Example \ref{ex210}.
         Then $P_1$ is isomorphic to $A_2 \oplus A_2 \oplus A_1 \oplus A_2$, and $\mathscr{M}(P_1)$ is a cube. See Figure \ref{fig3}(a).
\end{example}

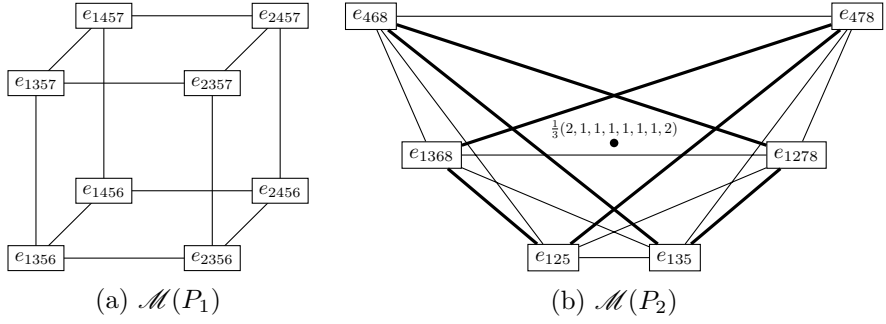
\begin{figure}[t]
\begin{minipage}{0.33\hsize}\centering
\begin{tikzpicture}[scale=0.58, transform shape] 
    \draw(-2,2,-2)--(-2,-2,-2)--(2,-2,-2);
    \draw(-2,-2,-2)--(-2,-2,2);
    \draw(-2,2,-2)--(-2,2,2)--(2,2,2)--(2,2,-2)--(-2,2,-2); 
    \draw(2,2,2)--(2,-2,2)--(2,-2,-2)--(2,2,-2); 
    \draw(-2,2,2)--(-2,-2,2)--(2,-2,2); 
    \node (A) [fill=white, draw,scale=1.3] at (-2,-2,2) {$e_{1356}$};
    \node (B) [fill=white, draw,scale=1.3] at (2,-2,2) {$e_{2356}$};
    \node (C) [fill=white, draw,scale=1.3] at (-2,-2,-2) {$e_{1456}$};
    \node (D) [fill=white, draw,scale=1.3] at (2,-2,-2) {$e_{2456}$};
    \node (E) [fill=white, draw,scale=1.3] at (-2,2,2) {$e_{1357}$};
    \node (F) [fill=white, draw,scale=1.3] at (2,2,2) {$e_{2357}$};
    \node (G) [fill=white, draw,scale=1.3] at (-2,2,-2) {$e_{1457}$};
    \node (H) [fill=white, draw,scale=1.3] at (2,2,-2) {$e_{2457}$};
\end{tikzpicture}
\begin{center}(a) $\mathscr{M}(P_1)$\end{center}
\end{minipage}
\begin{minipage}{0.60\hsize}\centering
\begin{tikzpicture}[scale=0.8, transform shape]
    \node (A1) [fill=white, draw] at (-1,0) {$e_{125}$};
    \node (B1) [fill=white, draw] at (-3,1.7) {$e_{1368}$};
    \node (C1) [fill=white, draw] at (4,4) {$e_{478}$};
    \node (A2) [fill=white, draw] at (1,0) {$e_{135}$};
    \node (B2) [fill=white, draw] at (3,1.7) {$e_{1278}$};
    \node (C2) [fill=white, draw] at (-4,4) {$e_{468}$};
    \coordinate (D) at (0, 1.9) node[scale=0.7] at (D) [above] {$\frac{1}{3}(2,1,1,1,1,1,1,2)$};
    \draw[very thick] (A1)--(B1) (B1)--(C1) (C1)--(A1);
    \draw[very thick] (A2)--(B2) (B2)--(C2) (C2)--(A2);
    \draw (A1)--(A2) (A1)--(B2) (A1)--(C2) (B1)--(A2) (B1)--(B2) (B1)--(C2) (C1)--(A2) (C1)--(B2) (C1)--(C2);
    \fill (D) circle (2pt);
\end{tikzpicture}
\begin{center} (b) $\mathscr{M}(P_2) $\end{center}
\end{minipage}
\caption{Maximal chain polytopes}
\label{fig3}
\end{figure}

\begin{proposition}\label{prop35}
Let $P$ be a poset and fix a nonempty subset $\mathcal{C} \subset \mathrm{MaxChain}(P)$ and a maximal chain $D \in \mathrm{MaxChain}(P) \setminus \mathcal{C}$.
If $\mathrm{conv}(\mathcal{C})$ is a face of $\mathscr{M}(P)$, then 
$\mathrm{conv}(\mathcal{C} \cup \{D\})$ is a cone with the base being $\mathrm{conv}(\mathcal{C})$.
\end{proposition}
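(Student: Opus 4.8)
The plan is to reduce the statement to the single linear inequality furnished by Lemma \ref{lemma1}. Recall that a polytope of the form $\mathrm{conv}(F \cup \{v\})$ is a cone (a pyramid) with base $F$ and apex $v$ precisely when $v$ does not lie in the affine hull of $F$. So the whole proposition comes down to showing that $e_D \notin \mathrm{aff}\{e_C \mid C \in \mathcal{C}\}$.

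First I would invoke Lemma \ref{lemma1}: since $\mathrm{conv}(\mathcal{C})$ is a face of $\mathscr{M}(P)$, there is a function $f \in \R^P$ with $f(C) \ge f(D')$ for all $C \in \mathcal{C}$ and all $D' \in \mathrm{MaxChain}(P)$, the equality holding exactly when $D' \in \mathcal{C}$. Fix any $C_0 \in \mathcal{C}$ (possible since $\mathcal{C} \neq \emptyset$) and consider the linear functional $\ell(t) = \sum_{x \in P} f(x)t_x$ on $\R^p$. By construction $\ell(e_C) = f(C) = f(C_0)$ for every $C \in \mathcal{C}$, so $\ell$ is constant, equal to $f(C_0)$, on $\mathrm{aff}\{e_C \mid C \in \mathcal{C}\}$. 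On the other hand, since $D \in \mathrm{MaxChain}(P) \setminus \mathcal{C}$, the equality clause of Lemma \ref{lemma1} forces the strict inequality $\ell(e_D) = f(D) < f(C_0)$. Hence $e_D$ cannot lie in $\mathrm{aff}\{e_C \mid C \in \mathcal{C}\}$.

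To finish, I would simply read off the conclusion: with $e_D$ outside the affine hull of $\mathrm{conv}(\mathcal{C})$, the polytope $\mathrm{conv}(\mathcal{C} \cup \{D\}) = \mathrm{conv}\bigl(\mathrm{conv}(\mathcal{C}) \cup \{e_D\}\bigr)$ is, by definition, the cone over the base $\mathrm{conv}(\mathcal{C})$ with apex $e_D$ (and in particular $\dim \mathrm{conv}(\mathcal{C} \cup \{D\}) = \dim \mathrm{conv}(\mathcal{C}) + 1$). I do not expect any genuine obstacle in this argument; the only point that needs care is that the hypothesis ``$\mathrm{conv}(\mathcal{C})$ is a face'' is precisely what produces the \emph{strict} inequality $f(D) < f(C_0)$ — drop it, and $e_D$ could perfectly well lie on the supporting hyperplane, or even inside $\mathrm{conv}(\mathcal{C})$, and the statement would fail. (An alternative route avoids Lemma \ref{lemma1} entirely: $e_D$ is a vertex of $\mathscr{M}(P)$, being a $0/1$-point of a subpolytope of the cube $[0,1]^p$, and a vertex of $\mathscr{M}(P)$ not lying in the face $\mathrm{conv}(\mathcal{C})$ cannot lie in its affine hull, since $\mathrm{aff}(\mathrm{conv}(\mathcal{C})) \cap \mathscr{M}(P) = \mathrm{conv}(\mathcal{C})$ for any face.)
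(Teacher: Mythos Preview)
Your proof is correct and follows essentially the same approach as the paper: the paper's two-line argument simply observes that since $\mathrm{conv}(\mathcal{C})$ is a face, there is a supporting hyperplane containing $\mathrm{conv}(\mathcal{C})$ and not containing $e_D$, which is exactly the hyperplane $\{\ell = f(C_0)\}$ you construct via Lemma~\ref{lemma1}. Your write-up is more explicit about why this yields a cone, but the underlying idea is identical.
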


\begin{proof}
Since $\mathrm{conv}(\mathcal{C})$ is a cone, there exists a supporting hyperplane containing $\mathrm{conv}(\mathcal{C})$ and not containing $e_{D}$.
\end{proof}

\begin{remark}
Assume that $\mathrm{conv}(\mathcal{C} \cup \{D\})$ is a face of $\mathscr{M}(P)$,
and let $\mathcal{C}_0$ be a proper nonempty subset of $\mathcal{C}$ such that $\mathrm{conv}(\mathcal{C}_0)$ is also a face of $\mathscr{M}(P)$.
Then the convex hull of $\mathrm{conv}(\mathcal{C}_0) \cup \{e_D\}$ is also a face of $\mathscr{M}(P)$ from Proposition \ref{prop35}.
Let us show this fact by constructing a function $\tilde{f} \in \R^P$ which takes its maximum value only at $\mathcal{C}_0 \cup \{D\}$ among all the elements of $\mathrm{MaxChain}(P)$.

Since $\mathrm{conv}(\mathcal{C}), \mathrm{conv}(\mathcal{C} \cup \{D\}), \mathrm{conv}(\mathcal{C}_0)$ are all faces of $\mathscr{M}(P)$,
there exists functions $f, g, f_0 \in \R^P$ satisfying
\begin{itemize}
 \item $f(C) \ge f(E)$ for every $C \in \mathcal{C}, E \in \mathrm{MaxChain}(P)$ with the equality holding if and only if $E \in \mathcal{C}$,
 \item $g(C) \ge g(E)$ for every $C \in \mathcal{C} \cup \{D\}, E \in \mathrm{MaxChain}(P)$ with the equality holding if and only if $E \in \mathcal{C} \cup \{D\}$,
 \item $f_0(C_0) \ge f_0(E)$ for every $C_0 \in \mathcal{C}_0, E \in \mathrm{MaxChain}(P)$ with the equality holding if and only if $E \in \mathcal{C}_0$.
\end{itemize}
For $B \in \mathcal{C}_0$, put $a=1/(f_0(B)-f_0(D)), b=1/(f(B)-f(D))$.
Then $a, b$ are both independent of the choice of $B$ and are both positive.
Fix a sufficiently small $\epsilon>0$ and define $\tilde{f}(x)=g(x)+\epsilon (af_0(x)-bf(x))$.
Then the following holds:
\begin{itemize}
 \item For $C_0, C_0' \in \mathcal{C}_0$, we have $f(C_0)=f(C_0'), g(C_0)=g(C_0')$ and $f_0(C_0)=f_0(C_0')$. Thus $\tilde{f}(C_0)=\tilde{f}(C_0')$.
 \item For $C_0 \in \mathcal{C}_0$, we have $g(C_0)=g(D)$ and $af_0(C_0)-bf(C_0)=af_0(D)-bf(D)$.
         Thus $\tilde{f}(C_0)=\tilde{f}(D)$.
 \item For $C_0 \in \mathcal{C}_0, C \in \mathcal{C} \setminus \mathcal{C}_0$, we have $f(C_0)=f(C), g(C_0)=g(C)$ and $f_0(C_0)>f_0(C)$.
         Thus $\tilde{f}(C_0)>\tilde{f}(C)$.
 \item For $C_0 \in \mathcal{C}_0, E \not\in \mathcal{C}\cup \{D\}$, we have $g(C_0)>g(E)$. Thus $\tilde{f}(C_0)>\tilde{f}(E)$ because $\epsilon$ is taken to be sufficiently small.
\end{itemize}
Therefore, $\tilde{f} \in \R^P$ satisfies 
$$\tilde{f}(C_0) \ge \tilde{f}(E) \ \text{for every} \ C_0 \in \mathcal{C}_0 \cup \{D\}, E \in \mathrm{MaxChain}(P)$$
with the equality holding if and only if $E \in \mathcal{C}_0 \cup \{D\}$.
\end{remark}

\begin{example}\label{ex37}
Let $P_4$ be the poset $\{1, \dots, 9\}$ such that its Hasse diagram is given in Figure \ref{fig4}(a).
Note that $P_4$ is isomorphic to $\bm{3}^2$.
$P_4$ does not have a crown, and $(2,7,3,8), (2,8,3,7)$ are the only stars up to cyclic permutations.
Put $\mathcal{C}=\{12579,12589,13579,13589\}$.
Then $\mathcal{C}$ has a complete crown structure, so $\mathrm{conv}(\mathcal{C})$ is a face of $\mathscr{M}(P_4)$.
Moreover, a simple calculation shows that $\mathrm{conv}(\mathcal{C})$ is a square (cf. Lemma \ref{lem311}).

Define $\mathcal{C}':=\mathcal{C} \cup \{12479\}$. Then $\mathcal{C}'$ does not have an incomplete crown structure,
so $\mathrm{conv}(\mathcal{C}')$ is a square pyramid-shaped face from Theorem \ref{mainthm2} and Proposition \ref{prop35}.
Similarly, $\mathrm{MaxChain}(P_4)=\mathcal{C}' \cup \{13689\}$ yields that
$\mathscr{M}(P_4)$ is a cone with the base being a square pyramid, as shown in figure \ref{fig4}(b).
In particular, $\dim \mathscr{M}(P_4)=4$.
\end{example}

\begin{figure}[t]
\begin{minipage}{0.3\hsize}\centering
\begin{tikzpicture}[scale=0.8]
\coordinate (O) at (0,0) node at (O) [below] {1} ;
\coordinate (A) at (-1,1) node at (A) [left] {2} ;
\coordinate (B) at (1,1) node at (B) [right] {3} ;
\coordinate (C) at (-2,2) node at (C) [left] {4} ;
\coordinate (D) at (0,2) node at (D) [right] {5} ;
\coordinate (E) at (2,2) node at (E) [right] {6} ;
\coordinate (F) at (-1,3) node at (F) [left] {7} ;
\coordinate (G) at (1,3) node at (G) [right] {8} ;
\coordinate (H) at (0,4) node at (H) [above] {9} ;

\fill (O) circle (2pt) (A) circle (2pt) (B) circle (2pt) (C) circle (2pt) (D) circle (2pt) (E) circle (2pt) (F) circle (2pt) (G) circle (2pt) (H) circle (2pt);

\draw (O) -- (A) ;
\draw (O) -- (B) ;
\draw (A) -- (C) ;
\draw (A) -- (D) ;
\draw (B) -- (D) ;
\draw (B) -- (E) ;
\draw (C) -- (F) ;
\draw (D) -- (F) ;
\draw (D) -- (G) ;
\draw (E) -- (G) ;
\draw (F) -- (H) ;
\draw (G) -- (H) ;
\end{tikzpicture}
\begin{center}(a) $P_4$\end{center}
\end{minipage}
\begin{minipage}{0.6\hsize}\centering
\begin{tikzpicture}[scale=0.8, transform shape]

\coordinate (A) at (-1.5,1.5) ;
\coordinate (B) at (1.5,1.5) ;
\coordinate (C) at (1.5,-1.5) ;
\coordinate (D) at (-1.5,-1.5) ;
\coordinate (E) at (4,0) ;
\coordinate (F) at (0,4) ;

\draw (A) -- (B) ;
\draw (B) -- (C) ;
\draw (C) -- (D) ;
\draw (D) -- (A) ;
\draw (A) -- (E) ;
\draw (B) -- (E) ;
\draw (C) -- (E) ;
\draw (D) -- (E) ;
\draw (A) -- (F) ;
\draw (B) -- (F) ;
\draw (C) -- (F) ;
\draw (D) -- (F) ;
\draw (E) -- (F) ;

\node (ZA) [fill=white, draw] at (A) {$e_{12589}$};
\node (ZB) [fill=white, draw] at (B) {$e_{13589}$};
\node (ZC) [fill=white, draw] at (C) {$e_{13579}$};
\node (ZD) [fill=white, draw] at (D) {$e_{12579}$};
\node (ZE) [fill=white, draw] at (E) {$e_{12479}$};
\node (ZF) [fill=white, draw] at (F) {$e_{13689}$};

\end{tikzpicture}
\begin{center}(b) $\mathscr{M}(P_4)$\end{center}
\end{minipage}
\caption{$P_4$ and its maximal chain polytope}
\label{fig4}
\end{figure}
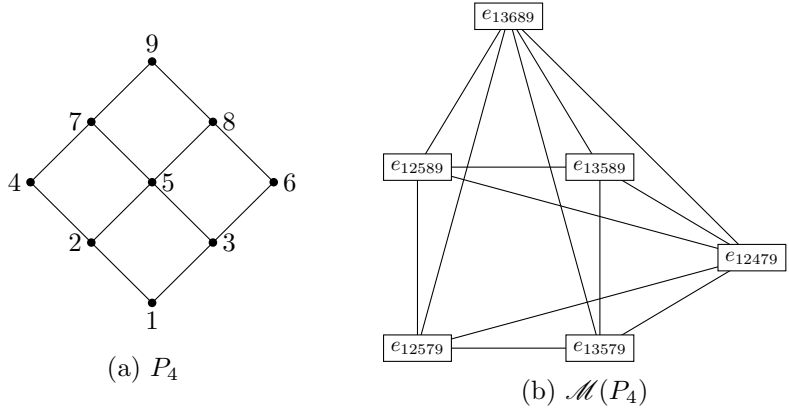

The following lemma holds because all the $\mathrm{conv}(\mathcal{C})$'s are 0/1-polytopes.

\begin{lemma}\label{lem38}
Let $S$ be a subset of $P$, and put $\mathcal{C} \subset \mathrm{MaxChain}(P)$.
Then $e_S \in \mathrm{conv}(\mathcal{C})$ if and only if $S \in \mathcal{C}$.
\end{lemma}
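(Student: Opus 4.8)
The plan is to use the fact that every vertex of $\mathscr{M}(P)$ is a $0/1$-point (namely $e_C$ for $C \in \mathrm{MaxChain}(P)$), and that distinct maximal chains give distinct $0/1$-vectors, so the map $C \mapsto e_C$ is injective on $\mathrm{MaxChain}(P)$. The ``if'' direction is immediate: if $S \in \mathcal{C}$ then $e_S$ is one of the points whose convex hull defines $\mathrm{conv}(\mathcal{C})$, hence $e_S \in \mathrm{conv}(\mathcal{C})$. The content is the ``only if'' direction.

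For the ``only if'' direction, suppose $e_S \in \mathrm{conv}(\mathcal{C})$, so $e_S = \sum_{C \in \mathcal{C}} \lambda_C\, e_C$ with $\lambda_C \ge 0$ and $\sum_C \lambda_C = 1$. First I would argue that each coordinate of $e_S$ lies in $\{0,1\}$: for $x \in P$, the $x$-coordinate of $e_S$ is $\sum_{C \ni x} \lambda_C$, which is a convex combination of the numbers $(e_C)_x \in \{0,1\}$. Since this convex combination equals $0$ or $1$, and the only way a convex combination of $0$'s and $1$'s with nonnegative weights summing to $1$ can equal $0$ (resp. $1$) is if $\lambda_C = 0$ for every $C$ with $x \in C$ (resp. for every $C$ with $x \notin C$), we conclude: if $x \in S$ then $\lambda_C = 0$ whenever $x \notin C$, and if $x \notin S$ then $\lambda_C = 0$ whenever $x \in C$.

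Now pick any $C_0 \in \mathcal{C}$ with $\lambda_{C_0} > 0$ (such a $C_0$ exists since the weights sum to $1$ and $\mathcal{C}$ is nonempty). By the two implications above, for every $x \in S$ we cannot have $x \notin C_0$ (else $\lambda_{C_0} = 0$), so $S \subset C_0$; and for every $x \notin S$ we cannot have $x \in C_0$, so $C_0 \subset S$. Hence $C_0 = S$, and in particular $S \in \mathcal{C}$. This completes the argument. I do not expect a real obstacle here — the only point requiring a moment's care is the elementary observation that a convex combination of $0$'s and $1$'s equalling an integer forces all the ``wrong-side'' weights to vanish, which is where the $0/1$ nature of the polytope is genuinely used; everything else is bookkeeping.
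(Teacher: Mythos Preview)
Your proof is correct and follows essentially the same approach as the paper: both arguments write $e_S$ as a convex combination $\sum_C \lambda_C e_C$, pick a chain $C_0$ with $\lambda_{C_0}>0$, and use the coordinate constraints (a $0$ coordinate forces $\lambda_C=0$ for all $C\ni x$, a $1$ coordinate forces $\lambda_C=0$ for all $C\not\ni x$) to conclude $C_0=S$. The only cosmetic difference is that the paper phrases this as a proof by contradiction (assuming $S\notin\mathcal{C}$ and exhibiting an element of $C_0\setminus S$ or $S\setminus C_0$), whereas you argue directly that $S\subset C_0$ and $C_0\subset S$.
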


\begin{proof}
The ``If" part is obvious.

Assume $S \not\in \mathcal{C}$.
If $e_S \in \mathrm{conv}(\mathcal{C})$,
then
$$e_S=\sum_{C \in \mathcal{C}} \alpha_C e_C \quad \text{for some} \ \alpha_C \ge 0,\quad \sum_{C \in \mathcal{C}} \alpha_C=1$$
must hold.
Let $C' \in \mathcal{C}$ satisfy $\alpha_{C'} \neq 0$.
Since $S \neq C'$, either $C' \setminus S$ or $S \setminus C'$ is nonempty.
\begin{itemize}
 \item Suppose $C' \setminus S$ is nonempty, and let $x \in C' \setminus S$.
Then the $x$-th coordinate of $e_S$ must satisfy
$$0=\sum_{\substack{C \in \mathcal{C} \\ C \ni x}} \alpha_C \ge \alpha_{C'}>0.$$
 \item Suppose $S \setminus C'$ is nonempty, and let $y \in S \setminus C'$.
Then the $y$-th coordinate of $e_S$ must satisfy
$$1=\sum_{\substack{C \in \mathcal{C} \\ C \ni y}} \alpha_C \le 1-\alpha_{C'} < 1.$$
\end{itemize}
Therefore, a contradiction holds in both cases.
\end{proof}

\begin{corollary}
Every $e_C$ is a vertex of $\mathrm{conv}(\mathcal{C})$ for $C \in \mathcal{C}$.
\end{corollary}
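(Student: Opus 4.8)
The plan is to deduce this immediately from Lemma \ref{lem38}. Recall the standard fact about convex hulls of finite point sets: for a finite set $V \subset \R^p$ and $v \in V$, the point $v$ is a vertex of $\mathrm{conv}(V)$ if and only if $v \notin \mathrm{conv}(V \setminus \{v\})$. (One direction is clear; conversely, if $v$ is not a vertex of $\mathrm{conv}(V)$, then $v$ is a convex combination of the actual vertices of $\mathrm{conv}(V)$, each of which lies in $V$ and is distinct from $v$, so $v \in \mathrm{conv}(V \setminus \{v\})$.) Thus it suffices to show that $e_C \notin \mathrm{conv}(\mathcal{C} \setminus \{C\})$ for every $C \in \mathcal{C}$.

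To obtain this, I would apply Lemma \ref{lem38} with $S = C$ and with the subset $\mathcal{C} \setminus \{C\}$ of $\mathrm{MaxChain}(P)$ playing the role of $\mathcal{C}$ in that lemma. Since $C$ is not an element of $\mathcal{C} \setminus \{C\}$, Lemma \ref{lem38} gives $e_C \notin \mathrm{conv}(\mathcal{C} \setminus \{C\})$. (In the degenerate case $\mathcal{C} = \{C\}$ this reads $e_C \notin \mathrm{conv}(\emptyset) = \emptyset$, which is trivially true, and a single point is of course its own vertex.) Combining this with the characterization recalled above shows that $e_C$ is a vertex of $\mathrm{conv}(\mathcal{C})$, as desired. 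There is no real obstacle here: all the combinatorial content is already packaged in Lemma \ref{lem38}, and what remains is the routine passage from ``not in the convex hull of the other points'' to ``is a vertex''.
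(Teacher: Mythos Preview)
Your proof is correct and is essentially the same as the paper's: both apply Lemma~\ref{lem38} with $S=C$ and $\mathcal{C}\setminus\{C\}$ in place of $\mathcal{C}$ to conclude $e_C \notin \mathrm{conv}(\mathcal{C}\setminus\{C\})$, from which the vertex claim follows. You have simply spelled out the routine step from ``not in the convex hull of the remaining points'' to ``is a vertex'' that the paper leaves implicit.
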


\begin{proof}
Since $e_C \not\in \mathrm{conv}(\mathcal{C} \setminus \{C\})$ from the previous lemma, we obtain the corollary.
\end{proof}

A different proof of the following proposition can be found in \cite{KO}.

\begin{proposition}\label{prop310}
Let $C_1, C_2$ be two distinct maximal chains. The followings are equivalent:
\begin{enumerate}
 \item $\mathrm{conv}(C_1, C_2)$ is not an edge of $\mathscr{M}(P)$,
 \item $\{C_1, C_2\}$ has either a $2$-crown or a star,
 \item $\{C_1 \setminus C_2, C_2 \setminus C_1\} \subset \mathrm{MaxChain}(C_1 \tridot C_2)$ has a $2$-crown.
\end{enumerate}
\end{proposition}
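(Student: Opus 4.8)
The plan is to establish the cycle $(1)\Leftrightarrow(2)\Leftrightarrow(3)$ by quoting Proposition \ref{prop29} for $(2)\Leftrightarrow(3)$, quoting Theorem \ref{mainthm2}(1) for $(1)\Leftrightarrow(2)$, and inserting one small structural observation: a two‑element family of maximal chains cannot carry a guided $\rho$-crown with $\rho\ge 3$.

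For $(2)\Leftrightarrow(3)$ I would simply apply Proposition \ref{prop29} with $\mathcal C=\{C_1,C_2\}$. The only pair of elements of $\mathcal C$ is $\{C_1,C_2\}$, and the degenerate choice of a single chain twice produces $\{\emptyset\}$, a one‑element family, which has no $2$-crown by Remark \ref{rem27}. So condition $(2)$ of Proposition \ref{prop29} for this $\mathcal C$ reads exactly ``$\{C_1\setminus C_2,\,C_2\setminus C_1\}\subset\mathrm{MaxChain}(C_1\tridot C_2)$ has a $2$-crown'', which is our $(3)$, while condition $(1)$ there is our $(2)$.

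For $(1)\Leftrightarrow(2)$ I would first note that $C_1\neq C_2$ forces $e_{C_1}\neq e_{C_2}$, so $\mathrm{conv}(C_1,C_2)$ is a segment of dimension exactly $1$; hence it is an edge of $\mathscr{M}(P)$ if and only if it is a face. Theorem \ref{mainthm2}(1), applied with $m=2$ and using that $\mathrm{conv}(C_1,C_2)$ is automatically a $1$-simplex, then says this happens exactly when $\{C_1,C_2\}$ has no guided crown structure. So $\mathrm{conv}(C_1,C_2)$ is \emph{not} an edge if and only if $\{C_1,C_2\}$ \emph{has} a guided crown structure, i.e.\ a guided $\rho$-crown for some $\rho\ge 2$ or a guided star. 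It remains to identify this with $(2)$, i.e.\ to rule out guided $\rho$-crowns with $\rho\ge 3$ in a two‑element family.

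The hard part is exactly this last claim, and I would argue it as follows. Suppose $W=(\alpha_1,\beta_1,\dots,\alpha_\rho,\beta_\rho)$ were a guided $\rho$-crown of $\{C_1,C_2\}$ with $\rho\ge 3$, and for each $i$ fix $E_i\in\{C_1,C_2\}$ containing $\alpha_i$ and $\beta_i$. Since $\alpha_i\lessdot\beta_i$ and $\alpha_{i+1}\lessdot\beta_i$ make $\alpha_i$ and $\alpha_{i+1}$ incomparable, while any two elements of a chain are comparable, we get $E_i\neq E_{i+1}$ for all $i$ (indices cyclic). With only two chains available this forces $i\mapsto E_i$ to alternate, so $\rho$ is even and $\rho\ge 4$. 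Cyclic permutations of $W$ by two $(\alpha,\beta)$-blocks rotate the odd‑indexed $\alpha_j$'s cyclically among themselves, so after such a permutation I may assume $\alpha_1$ is the smallest, in the chain $E_1$, among all $\alpha_j$ with $j$ odd; then $\beta_1$ is the immediate successor of $\alpha_1$ in $E_1$, and $\alpha_3\in E_1$ with $\alpha_3>\alpha_1$ forces $\beta_1\le\alpha_3$. But then $\alpha_2\lessdot\beta_1\le\alpha_3\lessdot\beta_2$ yields $\alpha_2<\alpha_3<\beta_2$, so $\alpha_3$ lies strictly between $\alpha_2$ and $\beta_2$, contradicting $\alpha_2\lessdot\beta_2$. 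The delicate points are to use the cyclic‑permutation freedom correctly (a shift by two blocks is what keeps the alternation aligned) and to check the argument still makes sense when $\rho=4$, so that the index $3$ above is legitimate. Granting this, the three conditions are equivalent.
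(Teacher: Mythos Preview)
Your proof is correct and follows essentially the same route as the paper: both invoke Proposition~\ref{prop29} for $(2)\Leftrightarrow(3)$ and Theorem~\ref{mainthm2} for $(1)\Leftrightarrow(2)$, with the core combinatorial step being the elimination of $\rho$-crowns with $\rho\ge 3$ via the alternation-and-minimality argument yielding $\alpha_2<\beta_1\le\alpha_3<\beta_2$ against $\alpha_2\lessdot\beta_2$. The only cosmetic difference is that you invoke part~(1) of Theorem~\ref{mainthm2} directly (exploiting that $\mathrm{conv}(C_1,C_2)$ is automatically a $1$-simplex), whereas the paper verifies incompleteness explicitly and appeals to part~(3); your handling of the cyclic permutation (shifting by two blocks to preserve parity) is also a bit more careful than the paper's, but the substance is identical.
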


\begin{proof}
$(1) \Rightarrow (2)$:\
Since $\mathrm{conv}(C_1, C_2)$ is not an edge of $\mathscr{M}(P)$, $\{C_1, C_2\}$ has an incomplete crown structure.
Suppose $(\alpha_1, \beta_1, \alpha_2, \beta_2, \dots, \alpha_\rho, \beta_\rho)$ is a $\rho$-crown ($\rho \ge 3$),
and we assume without loss of generality that $\alpha_1$ is an element of $C_1$.
Then $\alpha_1, \beta_1, \alpha_3,\ \beta_3, \dots \in C_1,\allowbreak \alpha_2, \beta_2, \alpha_4, \beta_4, \dots \in C_2$ must hold.
By applying cyclic permutation if necessary, let $\alpha_1$ be the smallest element of $\alpha_1, \alpha_3, \dots$.
Since all the $\alpha_i$'s  are distinct, $\alpha_1 \lessdot \beta_1 \le \alpha_3$ must hold.
Therefore, we have
$$ \alpha_2< \beta_1 \le \alpha_3 < \beta_2,$$
which is a contradiction because $\alpha_2 \lessdot \beta_2$ is a covering.

$(2) \Rightarrow (1)$:\
Neither $C_1$ nor $C_2$ contains both $\alpha_1$ and $\beta_2$, so the $2$-crown/star is incomplete.
Therefore, $\mathrm{conv}(C_1, C_2)$ is not an edge of $\mathscr{M}(P)$ from Theorem \ref{mainthm2}.

$(2) \Leftrightarrow (3)$ is already proven in Proposition \ref{prop29}.
\end{proof}

The following lemma is a property of 0/1-polytopes.

\begin{lemma}\label{lem311}
Let $S_1, S_2, S_3, S_4$ be four distinct subsets of $P$.
If $e_{S_1}, e_{S_2}, e_{S_3},e_{S_4}$ all lie on the same two-dimensional affine plane,
then $\mathrm{conv}(S_1, S_2, S_3, S_4)$ is a rectangle.
\end{lemma}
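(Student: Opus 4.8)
The plan is to pin down the affine dependence among the four $0/1$ points $e_{S_1},\dots,e_{S_4}$, show it must (after relabelling the $S_i$) be the ``parallelogram relation'' $e_{S_1}+e_{S_3}=e_{S_2}+e_{S_4}$, and then read orthogonality of the sides directly off the $0/1$ pattern. First I would record two elementary facts about $\{0,1\}^p$: a line of $\R^p$ meets $\{0,1\}^p$ in at most two points, so four distinct $e_{S_i}$ are never collinear; and each $e_{S_i}$, being a vertex of the cube $[0,1]^p$, is a vertex of any subpolytope of the cube containing it, in particular of $\mathrm{conv}(S_1,\dots,S_4)$ (this is the argument behind the Corollary to Lemma~\ref{lem38}). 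Since the four points lie on a $2$-plane and are not collinear, their affine hull is exactly $2$-dimensional, so $\mathrm{conv}(S_1,\dots,S_4)$ is a convex quadrilateral whose vertex set is exactly $\{e_{S_1},\dots,e_{S_4}\}$, and the space of affine dependences $\sum_i\lambda_i e_{S_i}=0$, $\sum_i\lambda_i=0$ is one-dimensional. I would then fix a nonzero dependence: no $\lambda_i$ can vanish (otherwise three of the points would be affinely dependent, hence collinear), and the $\lambda_i$ cannot have three like signs (that would realise one vertex as a convex combination of the other three), so after relabelling there are positive reals $a,b,c,d$ with
\[
a\,e_{S_1}+c\,e_{S_3}=b\,e_{S_2}+d\,e_{S_4},\qquad a+c=b+d .
\]
Since then $[e_{S_1},e_{S_3}]$ and $[e_{S_2},e_{S_4}]$ share a relative interior point, these two (nondegenerate) segments are the diagonals of the quadrilateral, so its boundary cycle is $S_1 S_2 S_3 S_4$.

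The crux is to force $a=b=c=d$. Put $m:=\frac1{a+c}(a\,e_{S_1}+c\,e_{S_3})=\frac1{b+d}(b\,e_{S_2}+d\,e_{S_4})$. Examining one coordinate at a time, the corresponding coordinate of $m$ is integral ($0$ or $1$) exactly when $S_1$ and $S_3$ agree there, and equals $a/(a+c)$ or $c/(a+c)$ otherwise; likewise with $b,d$ in place of $a,c$. Picking a coordinate in $S_1\triangle S_3$ (nonempty, by distinctness) shows that $\{a/(a+c),\,c/(a+c)\}$ and $\{b/(b+d),\,d/(b+d)\}$ overlap, which together with $a+c=b+d$ forces either ($a=b$ and $c=d$) or ($a=d$ and $c=b$). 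In either case the relation rearranges to $a(e_{S_i}-e_{S_j})=c(e_{S_k}-e_{S_l})$ for a suitable labelling with $S_i\neq S_j$; comparing coordinates, both sides have all entries in $\{-1,0,1\}$, so if $a\neq c$ the only possibility is that all entries vanish, i.e.\ $S_i=S_j$, a contradiction. Hence $a=c$, and therefore $a=b=c=d$ and $e_{S_1}+e_{S_3}=e_{S_2}+e_{S_4}$. In particular the diagonals bisect each other, so $\mathrm{conv}(S_1,\dots,S_4)$ is a parallelogram.

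It remains to produce a right angle. The identity $e_{S_1}+e_{S_3}=e_{S_2}+e_{S_4}$ forces each coordinate to lie (as an index) in all four $S_i$, in none of them, or in exactly one of the \emph{pairwise disjoint} blocks $A=(S_1\cap S_2)\setminus(S_3\cup S_4)$, $B=(S_1\cap S_4)\setminus(S_2\cup S_3)$, $C=(S_2\cap S_3)\setminus(S_1\cup S_4)$ and $D=(S_3\cap S_4)\setminus(S_1\cup S_2)$. Consequently the two edge vectors at the vertex $e_{S_1}$ are $e_{S_2}-e_{S_1}=e_C-e_B$ and $e_{S_4}-e_{S_1}=e_D-e_A$, where $e_A,e_B,e_C,e_D$ denote the indicator vectors of $A,B,C,D$. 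These have disjoint supports, hence are orthogonal; and they are nonzero, since their supports are $S_1\triangle S_2$ and $S_1\triangle S_4$, which are nonempty because the $S_i$ are distinct. A parallelogram with a right angle and nonzero sides is a rectangle, which completes the argument.

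The step I expect to be the main obstacle is the collapse $a=b=c=d$: it is elementary arithmetic, but it is exactly where the $0/1$ hypothesis is used, and it must be invoked twice --- once at a coordinate of $S_1\triangle S_3$ to match up the weights, and once at a coordinate of $S_1\triangle S_2$ (or $S_1\triangle S_4$) to remove the remaining scaling freedom. Everything else is routine bookkeeping with $0/1$-polytopes.
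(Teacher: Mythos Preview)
Your argument is correct and follows essentially the same line as the paper's: use the $0/1$ constraint to force the unique affine dependence among $e_{S_1},\dots,e_{S_4}$ to be (after relabelling) $e_{S_1}+e_{S_3}=e_{S_2}+e_{S_4}$, and then read orthogonality of the sides off the coordinate pattern. The paper packages this a bit more tersely---it translates by $e_{S_1}$, writes $v^4=av^2+bv^3$ with $v^i=e_{S_i}-e_{S_1}$, observes that no coordinate of the $v^i$ can take both values $\pm1$, and deduces $a=b=1$ (after relabelling) and $v^2\cdot v^3=0$ directly---whereas you add the convex-geometric preliminaries (four vertices, identifying the diagonals) and a block decomposition for the orthogonality step; but the core idea is the same.
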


\begin{proof}
Put 
$$v^2=(v^2_x)=e_{S_2}-e_{S_1}, v^3=(v^3_x)=e_{S_3}-e_{S_1}, v^4=(v^4_x)=e_{S_4}-e_{S_1}.$$
Then none of the two vectors $v^2, v^3, v^4 \in \{-1, 0, 1\}^p$ are parallel.
Also for any $x \in P$, $\{v^2_x, v^3_x, v^4_x\}$ does not contain both $-1$ and $1$ because the difference between the $x$-coordinates of $e_{S_i}$ and $e_{S_j}$ is either $0$ or $1$ for any $i, j$.

From the assumption, there exists $a, b \in \R^\times$ such that $v^4=av^2+bv^3$.
However, $v^2_s \neq v^3_s$ for some $s$ yields $a, b \in \{-1, 1\}$.
If $a=b=-1$, we have $3e_{S_1}=e_{S_2}+e_{S_3}+e_{S_4}$, which occurs only when $S_1=S_2=S_3=S_4$.
A contradiction,
Thus we may assume $a=b=1$ by replacing $S_2, S_3$, and $S_4$ if necessary, 
Furthermore, either $v^2_x$ or $v^3_x$ must be 0 for every $x$, because $v^4_x \neq \pm 2$.
Therefore, we have $v^4=v^2+v^3$ and $v^2 \cdot v^3=0$, so $\mathrm{conv}(S_1, S_2, S_3, S_4)$ is a rectangle.
\end{proof}

\begin{remark}\label{rem312}
Suppose that $\{C_1,\ C_2\}$ has an incomplete crown structure.
Then Remark \ref{rem27} yields that $C_1 \neq C_2$.
Moreover, $\mathrm{conv}(C_1, C_2)$ is not an edge of $\mathcal{M}(P)$ from Theorem \ref{mainthm2}.
Thus, from Proposition \ref{prop310}, it has either a $2$-crown or a star.
\begin{itemize}
 \item If it has a $2$-crown $(\alpha_1, \beta_1, \alpha_2, \beta_2)\ (\alpha_i, \beta_i \in C_i)$, put
$$C_1'=(-\infty,\ \alpha_1]_{C_1} \cup [\beta_2, \infty)_{C_2}, C_2'=(-\infty,\ \alpha_2]_{C_2} \cup [\beta_1, \infty)_{C_1}.$$
 \item If it has a star $(\alpha_1, \beta_1, \alpha_2, \beta_2)\ (\alpha_i, \beta_i \in C_i)$, fix $\gamma \in (\alpha_1, \beta_1)_{C_1} \cap (\alpha_2, \beta_2)$ and put
$$C_1'=(-\infty,\ \gamma]_{C_1} \cup (\gamma, \infty)_{C_2}, C_2'=(-\infty,\ \gamma]_{C_2} \cup (\gamma, \infty)_{C_1}.$$
\end{itemize}
In both cases, $\mathrm{conv}(C_1, C_2, C_1', C_2')$ is a rectangle from Lemma \ref{lem311}.
\end{remark}

\begin{example}
Let $P_2$ be the poset that we mentioned in Example \ref{ex210}.
Then $P_2$ has two $3$-crowns $(2, 5, 3, 6, 4, 7), (2, 7, 4, 6, 3, 5)$ up to cyclic permutations.
Let $\mathcal{C} \in \mathrm{MaxChain}(P_2)$.Then $\mathrm{conv}(\mathcal{C})$ is not a face of $\mathscr{M}(P_2)$ if and only if
only one of the two sets $\{125, 1368, 478\}, \{1278,135,468\}$ is contained in $\mathcal{C}$.
Since $\frac{1}{3}(e_{125}+e_{1368}+e_{478})=\frac{1}{3}(e_{1278}+e_{135}+e_{468})$,
two triangles $\mathrm{conv}(125,1368,478)$ and $\mathrm{conv}(1278,135,468)$, not faces of $\mathscr{M}(P_2)$, intersect at a point $\frac{1}{3}(2, 1, 1, 1, 1, 1, 1, 2)$.
$\mathscr{M}(P_2)$ is the convex hull of such two triangles. See Figure \ref{fig3}(b).
\end{example}

\begin{lemma}\label{lem314}
Let $\mathcal{C}_1, \mathcal{C}_2 \subset \mathrm{MaxChain}(P)$.
Then $\mathcal{C}_1 \subset \mathcal{C}_2$ if and only if $\mathrm{conv}(\mathcal{C}_1) \subset \mathrm{conv}(\mathcal{C}_2)$.
\end{lemma}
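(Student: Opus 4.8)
The plan is to reduce both implications to statements about vertices, using Lemma \ref{lem38} as the essential ingredient. The forward direction is immediate: if $\mathcal{C}_1 \subset \mathcal{C}_2$, then for each $C \in \mathcal{C}_1$ the point $e_C$ is one of the vertices defining $\mathrm{conv}(\mathcal{C}_2)$, hence $e_C \in \mathrm{conv}(\mathcal{C}_2)$; since $\mathrm{conv}(\mathcal{C}_2)$ is convex and contains all generators of $\mathrm{conv}(\mathcal{C}_1)$, it contains $\mathrm{conv}(\mathcal{C}_1)$.

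For the converse, I would assume $\mathrm{conv}(\mathcal{C}_1) \subset \mathrm{conv}(\mathcal{C}_2)$ and take an arbitrary $C \in \mathcal{C}_1$. Then $e_C \in \mathrm{conv}(\mathcal{C}_1) \subset \mathrm{conv}(\mathcal{C}_2)$. Applying Lemma \ref{lem38} with $S = C$ and $\mathcal{C} = \mathcal{C}_2$, the membership $e_C \in \mathrm{conv}(\mathcal{C}_2)$ forces $C \in \mathcal{C}_2$. As $C$ was arbitrary, $\mathcal{C}_1 \subset \mathcal{C}_2$.

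There is essentially no obstacle here: the entire content has been front-loaded into Lemma \ref{lem38}, which records that a $0/1$ point lies in $\mathrm{conv}(\mathcal{C})$ only if it is already a generator. The only thing to be a little careful about is that $\mathcal{C}_1$ (or $\mathcal{C}_2$) could be empty, but in that degenerate case $\mathrm{conv}(\emptyset) = \emptyset$ and both sides of the equivalence behave consistently, so the argument above still applies verbatim once one reads $e_C$ over $C \in \mathcal{C}_1$.
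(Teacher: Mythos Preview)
Your proposal is correct and follows essentially the same route as the paper: the paper declares the ``only if'' direction obvious and, for the ``if'' direction, picks $C \in \mathcal{C}_1$, notes $e_C \in \mathrm{conv}(\mathcal{C}_1) \subset \mathrm{conv}(\mathcal{C}_2)$, and invokes Lemma~\ref{lem38} to conclude $C \in \mathcal{C}_2$. Your write-up is slightly more explicit (spelling out the forward direction and the empty case), but the argument is identical.
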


\begin{proof}
The ``only if'' part is obvious, so we will show the ``if'' part.
Let $C \in \mathcal{C}_1$.
Then we have $e_C \in \mathrm{conv}(\mathcal{C}_1) \subset \mathrm{conv}(\mathcal{C}_2)$.
Therefore $C \in \mathcal{C}_2$ must hold from Lemma \ref{lem38}.
\end{proof}

Define
$$ \mathfrak{K}(P)=\{\mathcal{C} \in 2^{\mathrm{MaxChain}(P)} \setminus \{\emptyset\} \mid \mathcal{C} \ \text{does not have an incomplete crown str.}\} \cup \{\emptyset\}.$$
Then $\mathfrak{K}(P)$, endowed with the set inclusion, has a lattice structure.

From Lemma \ref{lem314}, we obtain the following theorem.

\begin{theorem}
There is an order-preserving bijection from $\mathfrak{K}(P)$ to the face lattice of $\mathscr{M}(P)$, sending $\mathcal{C}$ to $\mathrm{conv}(\mathcal{C})$.
\end{theorem}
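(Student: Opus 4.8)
The plan is to verify that the map $\mathcal{C} \mapsto \mathrm{conv}(\mathcal{C})$ is a well-defined bijection between $\mathfrak{K}(P)$ and the face lattice of $\mathscr{M}(P)$, and that it preserves and reflects the order relation. First I would address well-definedness: if $\mathcal{C} \in \mathfrak{K}(P)$ is nonempty, then $\mathcal{C}$ does not have an incomplete guided crown structure, so by Theorem \ref{mainthm} the set $\mathrm{conv}(\mathcal{C})$ is a face of $\mathscr{M}(P)$; the empty set is sent to the empty face (or, depending on convention, $\emptyset = \mathrm{conv}(\emptyset)$ is the improper face), which is fine. Conversely, I need surjectivity: every face $F$ of $\mathscr{M}(P)$ is of the form $\mathrm{conv}(\mathcal{C})$ for some $\mathcal{C}$. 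This follows because the vertices of $\mathscr{M}(P)$ are exactly the $e_C$ for $C \in \mathrm{MaxChain}(P)$ (each such $e_C$ is a vertex since it is a $0/1$-point extreme in the cube), so if $\mathcal{C}$ is the set of maximal chains whose characteristic vectors are vertices of $F$, then $F = \mathrm{conv}(\mathcal{C})$; and since $F$ is a face it is in particular a face of $\mathscr{M}(P)$, so $\mathcal{C}$ cannot have an incomplete crown structure by Theorem \ref{mainthm}, i.e. $\mathcal{C} \in \mathfrak{K}(P)$.

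Next I would establish injectivity together with order-preservation and order-reflection in one stroke, using Lemma \ref{lem314}: for $\mathcal{C}_1, \mathcal{C}_2 \subset \mathrm{MaxChain}(P)$ we have $\mathcal{C}_1 \subset \mathcal{C}_2 \iff \mathrm{conv}(\mathcal{C}_1) \subset \mathrm{conv}(\mathcal{C}_2)$. Injectivity is then immediate: if $\mathrm{conv}(\mathcal{C}_1) = \mathrm{conv}(\mathcal{C}_2)$ then each is contained in the other, hence $\mathcal{C}_1 = \mathcal{C}_2$. The same equivalence is exactly the statement that the bijection and its inverse are both order-preserving, i.e. that it is an isomorphism of posets, hence of lattices (an order-isomorphism between two lattices automatically respects meets and joins). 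For the empty-set case I would just note the conventions match up: $\emptyset$ is the bottom of $\mathfrak{K}(P)$ and the empty face is the bottom of the face lattice, and $\mathrm{conv}(\emptyset) = \emptyset$.

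The one point requiring a little care — and the closest thing to an obstacle — is confirming that $\mathfrak{K}(P)$ really carries a lattice structure as claimed in the sentence preceding the theorem, and that this lattice structure is the one transported from the face lattice. For this I would observe that the face lattice of any polytope is a lattice, that our map is an order-isomorphism onto it by the above, and that an order-isomorphism between a poset and a lattice forces the poset to be a lattice with the induced operations; so $\mathfrak{K}(P)$ inherits its lattice structure precisely via $\mathcal{C} \mapsto \mathrm{conv}(\mathcal{C})$, and no independent combinatorial verification (e.g. that $\mathfrak{K}(P)$ is closed under the relevant meet/join operations on sets of maximal chains) is needed for the theorem itself. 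Everything else is routine once Theorem \ref{mainthm}, the vertex description of $\mathscr{M}(P)$, and Lemma \ref{lem314} are in hand.
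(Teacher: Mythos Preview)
Your proposal is correct and follows essentially the same approach as the paper: the paper's proof consists of the single sentence ``From Lemma \ref{lem314}, we obtain the following theorem,'' leaving well-definedness (via Theorem \ref{mainthm}), surjectivity (via the vertex description established in the Corollary to Lemma \ref{lem38}), and the handling of the empty face implicit, whereas you spell these out explicitly. Your additional remark on how $\mathfrak{K}(P)$ acquires its lattice structure by transport along the order-isomorphism is a helpful clarification the paper does not make.
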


\begin{corollary}\label{cor316}
Let $\mathcal{C}_1, \mathcal{C}_2 \in \mathfrak{K}(P)$ satisfy $\emptyset \neq \mathcal{C}_1 \subsetneq \mathcal{C}_2$.
Then $$\dim\mathrm{conv}(\mathcal{C}_1) < \dim\mathrm{conv}(\mathcal{C}_2).$$
Moreover, if $\mathcal{C}_1 \subset \mathcal{C}_2$ is a covering in $\mathfrak{K}(P)$, then
$$\dim\mathrm{conv}(\mathcal{C}_1)+1 = \dim\mathrm{conv}(\mathcal{C}_2).$$
\end{corollary}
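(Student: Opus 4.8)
The plan is to deduce both statements from the order-preserving bijection of the preceding theorem together with two standard facts about the face lattice of a polytope: a proper face of a polytope has strictly smaller dimension, and a proper face that is not a facet is strictly contained in some facet.

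For the first inequality, note that since $\emptyset \ne \mathcal{C}_1 \subsetneq \mathcal{C}_2 \subseteq \mathrm{MaxChain}(P)$, both $\mathrm{conv}(\mathcal{C}_1)$ and $\mathrm{conv}(\mathcal{C}_2)$ are faces of $\mathscr{M}(P)$, and Lemma~\ref{lem314} gives $\emptyset \ne \mathrm{conv}(\mathcal{C}_1) \subsetneq \mathrm{conv}(\mathcal{C}_2) \subseteq \mathscr{M}(P)$. In particular $\mathrm{conv}(\mathcal{C}_1)$ is a nonempty proper face of $\mathscr{M}(P)$, so it equals $\mathscr{M}(P) \cap H$ for some supporting hyperplane $H$. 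Because $\mathrm{conv}(\mathcal{C}_2) \subseteq \mathscr{M}(P)$, the hyperplane $H$ also supports $\mathrm{conv}(\mathcal{C}_2)$ and $\mathrm{conv}(\mathcal{C}_2) \cap H = \mathrm{conv}(\mathcal{C}_1)$; thus $\mathrm{conv}(\mathcal{C}_1)$ is a proper face of the polytope $\mathrm{conv}(\mathcal{C}_2)$. Now $\mathrm{aff}(\mathrm{conv}(\mathcal{C}_1)) \subseteq H$, whereas $\mathrm{aff}(\mathrm{conv}(\mathcal{C}_2)) \not\subseteq H$ (otherwise $\mathrm{conv}(\mathcal{C}_2) \subseteq H$, forcing $\mathrm{conv}(\mathcal{C}_2)=\mathrm{conv}(\mathcal{C}_2)\cap H=\mathrm{conv}(\mathcal{C}_1)$, a contradiction), and this rules out equality of the affine hulls, hence $\dim \mathrm{conv}(\mathcal{C}_1) < \dim \mathrm{conv}(\mathcal{C}_2)$.

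For the ``moreover'' part, suppose $\mathcal{C}_1 \subsetneq \mathcal{C}_2$ is a covering in $\mathfrak{K}(P)$ but $\dim \mathrm{conv}(\mathcal{C}_2) \ge \dim \mathrm{conv}(\mathcal{C}_1) + 2$. Put $F_1 = \mathrm{conv}(\mathcal{C}_1)$ and $F_2 = \mathrm{conv}(\mathcal{C}_2)$. As in the previous paragraph $F_1$ is a proper face of the polytope $F_2$, and since $\dim F_1 \le \dim F_2 - 2$ it is not a facet of $F_2$; therefore $F_1$ is contained in some facet $F''$ of $F_2$, and since $\dim F'' = \dim F_2 - 1 > \dim F_1$ the containment $F_1 \subsetneq F''$ is strict, giving $F_1 \subsetneq F'' \subsetneq F_2$. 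Because $F''$ is a face of the polytope $F_2$ and $F_2$ is a face of $\mathscr{M}(P)$, the set $F''$ is itself a face of $\mathscr{M}(P)$; by the order-preserving bijection of the preceding theorem it equals $\mathrm{conv}(\mathcal{C}_3)$ for some $\mathcal{C}_3 \in \mathfrak{K}(P)$, and Lemma~\ref{lem314} turns $F_1 \subsetneq F'' \subsetneq F_2$ into $\mathcal{C}_1 \subsetneq \mathcal{C}_3 \subsetneq \mathcal{C}_2$, contradicting the covering hypothesis. Together with the first part, which already gives $\dim\mathrm{conv}(\mathcal{C}_2) \ge \dim\mathrm{conv}(\mathcal{C}_1)+1$, this forces the claimed equality.

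The external inputs are only the routine polytope facts ``a proper face has strictly smaller dimension,'' ``a proper non-facet face lies strictly inside a facet,'' and ``a face of a face is a face''; the one point deserving a moment's care is confirming that the facet $F''$ contains $F_1$ \emph{strictly}, which is exactly the dimension count $\dim F_1 \le \dim F_2 - 2 < \dim F_2 - 1 = \dim F''$. Alternatively, one can simply observe that the face lattice of any polytope is graded with rank function $\dim(\cdot)+1$ and transport this property across the bijection of the preceding theorem.
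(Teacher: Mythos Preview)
Your proof is correct and follows the same route the paper intends: the paper gives no explicit argument for this corollary, treating it as immediate from the preceding bijection theorem together with the standard fact that the face lattice of a polytope is graded by $\dim(\cdot)+1$. Your write-up simply unpacks these standard polytope facts (proper faces have smaller dimension; non-facets sit strictly inside a facet; faces of faces are faces) and pushes them through the bijection and Lemma~\ref{lem314}, which is exactly what the paper leaves implicit---indeed your final sentence is the one-line proof the paper has in mind.
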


For $\mathcal{C} \in 2^\mathrm{MaxChain}(P) \setminus \{\emptyset\}$,
let $\mathcal{C}^\star$ be the set of maximal chains obtained by adding maximal chains to $\mathcal{C}$ so that all the crowns/stars of $\mathcal{C}$ are complete, i.e.
\begin{multline*}
\mathcal{C}^\star := \mathcal{C} \cup \bigcup_{\substack{(\alpha_1, \beta_1, \dots, \alpha_{\rho}, \beta_{\rho}) \text{: a $\rho$-crown} \\ C_1 \in \mathcal{C}_{\alpha_1 \beta_1}, \dots, C_\rho \in \mathcal{C}_{\alpha_\rho \beta_\rho} \\ i \in \{1, \dots, \rho\}}} (-\infty, \alpha_i]_{C_i} \cup [\beta_{i-1}, \infty)_{C_{i-1}} \\
\cup \bigcup_{\substack{(\alpha_1, \beta_1, \alpha_2, \beta_2) \text{: a star} \\ C_1 \in \mathcal{C}_{\alpha_1 \beta_1}, C_2 \in \mathcal{C}_{\alpha_2 \beta_2} \\ \gamma \in (\alpha_1, \beta_1)_{C_1} \cap (\alpha_2, \beta_2)_{C_2} \\ i \in \{1, 2\}}} (-\infty, \gamma]_{C_i} \cup (\gamma, \infty)_{C_{3-i}}.$$
\end{multline*}
Then $\mathcal{C} \in \mathfrak{K}(P)$ if and only if $\mathcal{C}^\star=\mathcal{C}$.

Suppose $\mathcal{C} \not\in \mathfrak{K}(P)$.
Then every crown/star of $\mathcal{C}$, which is also a crown/star of $\mathcal{C}^\star$,  is complete in $\mathcal{C}^\star$.
However, $\mathcal{C}^\star$ may still have an incomplete crown structure, because $\mathcal{C}^\star$ may have a new crown/star.

\begin{example}
Let $P_5=\{1, 2, 3, 4, 5, 6\}$ be the poset such that its Hasse diagram is given in Figure \ref{fig5}(a).
Put $\mathcal{C}=\{14, 25, 26, 34\}$.
Then $\mathcal{C}$ has a $2$-crown $(1, 4, 2, 5)$, so we obtain $\mathcal{C}^\star=\{14, 15, 24, 25, 26, 34\}$.
However, $\mathcal{C}^\star$ still has an incomplete crown structure because of the new $3$-crown $(1, 5, 2, 6, 3, 4)$,
and we have $(\mathcal{C}^{\star})^\star=\mathrm{MaxChain}(P_5)$. See Figure \ref{fig5}.
\end{example}

\begin{figure}[t]
\begin{minipage}{0.3\hsize}\centering
\begin{tikzpicture}
\coordinate (O) at (0,0) node at (O) [below] {1} ;
\coordinate (A) at (1,0) node at (A) [below] {2} ;
\coordinate (B) at (2,0) node at (B) [below] {3} ;
\coordinate (C) at (0,1) node at (C) [above] {4} ;
\coordinate (D) at (1,1) node at (D) [above] {5} ;
\coordinate (E) at (2,1) node at (E) [above] {6} ;

\fill (O) circle (2pt) (A) circle (2pt) (B) circle (2pt) (C) circle (2pt) (D) circle (2pt) (E) circle (2pt);

\draw (O) -- (C) ;
\draw (O) -- (D) ;
\draw (A) -- (C) ;
\draw (A) -- (D) ;
\draw (A) -- (E) ;
\draw (B) -- (C) ;
\draw (B) -- (E) ;
\end{tikzpicture}
\begin{center}(a) Hasse diagram of $P_5$\end{center}
\end{minipage}
\begin{minipage}{0.3\hsize}\centering
\begin{tikzpicture}
\coordinate (O) at (0,0) node at (O) [below] {1} ;
\coordinate (A) at (1,0) node at (A) [below] {2} ;
\coordinate (B) at (2,0) node at (B) [below] {3} ;
\coordinate (C) at (0,1) node at (C) [above] {4} ;
\coordinate (D) at (1,1) node at (D) [above] {5} ;
\coordinate (E) at (2,1) node at (E) [above] {6} ;

\fill (O) circle (2pt) (A) circle (2pt) (B) circle (2pt) (C) circle (2pt) (D) circle (2pt) (E) circle (2pt);

\draw[very thick] (O) -- (C) ;
\draw (O) -- (D) ;
\draw (A) -- (C) ;
\draw[very thick] (A) -- (D) ;
\draw[very thick] (A) -- (E) ;
\draw[very thick] (B) -- (C) ;
\draw (B) -- (E) ;

\end{tikzpicture}
\begin{center}(b) $\mathcal{C}$\end{center}
\end{minipage}
\begin{minipage}{0.3\hsize}\centering
\begin{tikzpicture}
\coordinate (O) at (0,0) node at (O) [below] {1} ;
\coordinate (A) at (1,0) node at (A) [below] {2} ;
\coordinate (B) at (2,0) node at (B) [below] {3} ;
\coordinate (C) at (0,1) node at (C) [above] {4} ;
\coordinate (D) at (1,1) node at (D) [above] {5} ;
\coordinate (E) at (2,1) node at (E) [above] {6} ;

\fill (O) circle (2pt) (A) circle (2pt) (B) circle (2pt) (C) circle (2pt) (D) circle (2pt) (E) circle (2pt);

\draw[very thick] (O) -- (C) ;
\draw[very thick] (O) -- (D) ;
\draw[very thick] (A) -- (C) ;
\draw[very thick] (A) -- (D) ;
\draw[very thick] (A) -- (E) ;
\draw[very thick] (B) -- (C) ;
\draw (B) -- (E) ;
\end{tikzpicture}
\begin{center}(c) $\mathcal{C}^\star$\end{center}
\end{minipage}
\caption{$\mathcal{C}^\star$ has an incomplete crown structure}
\label{fig5}
\end{figure}
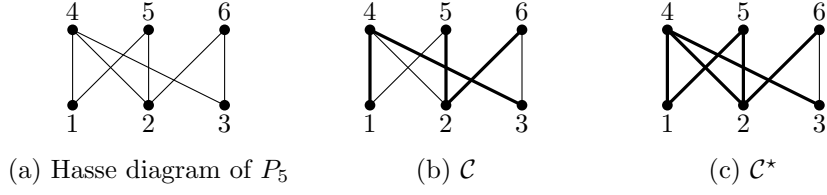

We have a sequence of inclusions $\mathcal{C} \subset \mathcal{C}^\star \subset \mathcal{C}^{2\star}:=(\mathcal{C}^\star)^\star \subset \mathcal{C}^{3\star}:=(\mathcal{C}^{2\star})^\star \subset \dots$.
Define $\overline{\mathcal{C}}:=\bigcup_{i} \mathcal{C}^{i\star}$.
Then $\overline{\mathcal{C}}=\mathcal{C}^{k\star}$ must hold for some sufficiently large $k$ because $\#\mathrm{MaxChain}(P)$ is finite.
From the definition of $\overline{\mathcal{C}}$, $\overline{\mathcal{C}}$ does not have an incomplete crown structure.
Therefore, $\mathrm{conv}(\overline{\mathcal{C}})$ is a face of $\mathscr{M}(P)$ from Theorem \ref{mainthm2}.

\begin{proposition}
$\mathrm{conv}(\overline{\mathcal{C}})$ is the minimum face of $\mathscr{M}(P)$ containing $\mathrm{conv}(\mathcal{C})$.
\end{proposition}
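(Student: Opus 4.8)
The plan is to verify two things: that $\mathrm{conv}(\overline{\mathcal{C}})$ is a face of $\mathscr{M}(P)$ containing $\mathrm{conv}(\mathcal{C})$, and that it is contained in every face with this property. The first is already established in the paragraph preceding the statement: $\overline{\mathcal{C}}$ has no incomplete crown structure, so $\mathrm{conv}(\overline{\mathcal{C}})$ is a face by Theorem \ref{mainthm2}, and $\mathcal{C}\subseteq\overline{\mathcal{C}}$ gives $\mathrm{conv}(\mathcal{C})\subseteq\mathrm{conv}(\overline{\mathcal{C}})$ by Lemma \ref{lem314}. So the real work is the minimality statement.

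Let $F$ be any face of $\mathscr{M}(P)$ with $\mathrm{conv}(\mathcal{C})\subseteq F$, and put $\mathcal{D}=\{C\in\mathrm{MaxChain}(P)\mid e_C\in F\}$. Since $\mathscr{M}(P)=\mathrm{conv}(\mathrm{MaxChain}(P))$ and each $e_C$ is a vertex of $\mathscr{M}(P)$ (Lemma \ref{lem38} applied to $\mathrm{MaxChain}(P)\setminus\{C\}$), the face $F$ is the convex hull of the vertices it contains, i.e.\ $F=\mathrm{conv}(\mathcal{D})$; being a face, $\mathcal{D}$ has no incomplete crown structure by Theorem \ref{mainthm}. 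From $\mathrm{conv}(\mathcal{C})\subseteq\mathrm{conv}(\mathcal{D})$ and Lemma \ref{lem314} we get $\mathcal{C}\subseteq\mathcal{D}$. It therefore suffices to prove $\overline{\mathcal{C}}\subseteq\mathcal{D}$, for then $\mathrm{conv}(\overline{\mathcal{C}})\subseteq\mathrm{conv}(\mathcal{D})=F$ by Lemma \ref{lem314}.

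I would prove $\mathcal{C}^{i\star}\subseteq\mathcal{D}$ for all $i\ge 0$ by induction; the case $i=0$ is $\mathcal{C}\subseteq\mathcal{D}$. For the inductive step, assume $\mathcal{C}^{i\star}\subseteq\mathcal{D}$ and examine the chains adjoined in passing to $\mathcal{C}^{(i+1)\star}=(\mathcal{C}^{i\star})^\star$. The key observation is monotonicity. If $W=(\alpha_1,\beta_1,\dots,\alpha_\rho,\beta_\rho)$ is a crown of $\mathcal{C}^{i\star}$, then it is also a crown of $\mathcal{D}$: the conditions $\alpha_j\lessdot\beta_j$, $\alpha_j\lessdot\beta_{j-1}$ and the distinctness of the $\alpha_j$'s and $\beta_j$'s refer to $P$ alone, while $(\mathcal{C}^{i\star})_{\alpha_j\beta_j}\neq\emptyset$ forces $\mathcal{D}_{\alpha_j\beta_j}\neq\emptyset$; likewise a star of $\mathcal{C}^{i\star}$ is a star of $\mathcal{D}$, since the clause ``there exists $C_1\in\mathcal{C}_{\alpha_1\beta_1},C_2\in\mathcal{C}_{\alpha_2\beta_2}$ with $(\alpha_1,\beta_1)_{C_1}\cap(\alpha_2,\beta_2)_{C_2}\neq\emptyset$'' only gets easier for a larger set. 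Moreover $(\mathcal{C}^{i\star})_{\alpha\beta}\subseteq\mathcal{D}_{\alpha\beta}$ for all $\alpha,\beta\in P$. Because $\mathcal{D}$ has no incomplete crown structure, every such crown and star is complete in $\mathcal{D}$; reading the completeness clause exactly as in the definition, this says that for $C_j\in\mathcal{D}_{\alpha_j\beta_j}$ and $C_{j-1}\in\mathcal{D}_{\alpha_{j-1}\beta_{j-1}}$ the chain $(-\infty,\alpha_j]_{C_j}\cup[\beta_{j-1},\infty)_{C_{j-1}}$ lies in $\mathcal{D}$, and similarly for the star terms $(-\infty,\gamma]_{C_i}\cup(\gamma,\infty)_{C_{3-i}}$. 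Since the maximal chains occurring in the union defining $\mathcal{C}^{(i+1)\star}$ are drawn precisely from the sets $(\mathcal{C}^{i\star})_{\alpha_j\beta_j}\subseteq\mathcal{D}_{\alpha_j\beta_j}$ (resp.\ $(\mathcal{C}^{i\star})_{\alpha_i\beta_i}\subseteq\mathcal{D}_{\alpha_i\beta_i}$), every newly adjoined chain already belongs to $\mathcal{D}$; hence $\mathcal{C}^{(i+1)\star}\subseteq\mathcal{D}$. Taking the union over $i$ yields $\overline{\mathcal{C}}=\bigcup_i\mathcal{C}^{i\star}\subseteq\mathcal{D}$, which completes the proof.

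This is essentially a closure-operator computation, so there is no deep obstacle; the one point requiring care is purely bookkeeping, namely matching the completeness conditions in the definitions of complete crowns and stars term-by-term with the sets appended in $\mathcal{C}^\star$, and checking that enlarging $\mathcal{C}^{i\star}$ to $\mathcal{D}$ keeps every crown a crown and every star a star — which is exactly the monotonicity of the ``$\mathcal{C}_{\alpha\beta}\neq\emptyset$'' and ``nonempty intersection'' clauses noted above.
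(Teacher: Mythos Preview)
Your proof is correct and follows essentially the same route as the paper: identify an arbitrary face containing $\mathrm{conv}(\mathcal{C})$ with $\mathrm{conv}(\mathcal{D})$ for some $\mathcal{D}\in\mathfrak{K}(P)$, deduce $\mathcal{C}\subseteq\mathcal{D}$ via Lemma~\ref{lem314}, and then use monotonicity of the $\star$-operation together with $\mathcal{D}^\star=\mathcal{D}$ to get $\overline{\mathcal{C}}\subseteq\mathcal{D}$. The only difference is presentation: the paper compresses your inductive step into the single line $\overline{\mathcal{C}}=\mathcal{C}^{k\star}\subseteq\mathcal{D}^{k\star}=\mathcal{D}$, whereas you spell out the monotonicity (``a crown of $\mathcal{C}^{i\star}$ is a crown of $\mathcal{D}$'') explicitly.
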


\begin{proof}
Suppose $\mathrm{conv}(\mathcal{D})$ is a face of $\mathscr{M}(P)$ satisfying $\mathrm{conv}(\mathcal{C}) \subset \mathrm{conv}(\mathcal{D})$.
Then from Theorem \ref{mainthm2}, we have $\mathcal{D} \in \mathfrak{K}$.
Moreover, we have $\mathcal{C} \subset \mathcal{D}$ from Lemma \ref{lem314}.
Therefore, $\overline{\mathcal{C}}=\mathcal{C}^{k\star} \subset \mathcal{D}^{k\star}=\mathcal{D}$ holds for some $k$,
so we obtain $\mathrm{conv}(\overline{\mathcal{C}}) \subset \mathrm{conv}(\mathcal{D})$.
\end{proof}

\begin{corollary}\label{cor319}
Suppose $\mathcal{C}, \mathcal{D} \in \mathfrak{K}(P)$ satisfy $\emptyset \neq \mathcal{C} \subsetneq \mathcal{D}$.
If $\overline{\mathcal{C} \cup \{D\}}=\mathcal{D}$ for any $D \in \mathcal{D} \setminus \mathcal{C}$, then
$\mathcal{D} \subset \mathcal{C}$ is a covering in $\mathfrak{K}(P)$.
\end{corollary}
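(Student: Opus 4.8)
The plan is to argue by contradiction, exploiting the fact (recorded just before this corollary) that $\mathfrak{K}(P)$ is exactly the collection of subsets of $\mathrm{MaxChain}(P)$ fixed by the closure operator $\overline{\,\cdot\,}$, equivalently by $\mathcal{C} \mapsto \mathcal{C}^\star$. Suppose that $\emptyset \neq \mathcal{C} \subsetneq \mathcal{D}$ is \emph{not} a covering in $\mathfrak{K}(P)$, so that there is some $\mathcal{E} \in \mathfrak{K}(P)$ with $\mathcal{C} \subsetneq \mathcal{E} \subsetneq \mathcal{D}$. First I would pick an element $D \in \mathcal{E} \setminus \mathcal{C}$, which is nonempty since $\mathcal{C} \subsetneq \mathcal{E}$. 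Because $\mathcal{E} \subsetneq \mathcal{D}$ we have $D \in \mathcal{D} \setminus \mathcal{C}$, so the hypothesis of the corollary yields $\overline{\mathcal{C} \cup \{D\}} = \mathcal{D}$.

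The one auxiliary fact I need is monotonicity of the closure: if $\mathcal{A} \subset \mathcal{B} \subset \mathrm{MaxChain}(P)$ then $\overline{\mathcal{A}} \subset \overline{\mathcal{B}}$. This reduces to $\mathcal{A}^\star \subset \mathcal{B}^\star$, which follows directly from the definition of $\mathcal{C}^\star$: every $\rho$-crown or star of $\mathcal{A}$ is also a $\rho$-crown or star of $\mathcal{B}$ (the covering relations $\alpha_i \lessdot \beta_i$ live in $P$ itself, and $\mathcal{A}_{\alpha_i\beta_i} \subset \mathcal{B}_{\alpha_i\beta_i}$, so in particular the nonemptiness conditions $\mathcal{C}_{\alpha_i\beta_i} \neq \emptyset$ survive), and with the same choice of guides $C_i \in \mathcal{A}_{\alpha_i\beta_i} \subset \mathcal{B}_{\alpha_i\beta_i}$ every maximal chain adjoined to $\mathcal{A}$ in forming $\mathcal{A}^\star$ is also among the chains adjoined to $\mathcal{B}$. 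Iterating gives $\mathcal{A}^{i\star} \subset \mathcal{B}^{i\star}$ for all $i$, hence $\overline{\mathcal{A}} \subset \overline{\mathcal{B}}$ after passing to unions.

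Now I would apply this with $\mathcal{A} = \mathcal{C} \cup \{D\} \subset \mathcal{E} = \mathcal{B}$, obtaining $\overline{\mathcal{C} \cup \{D\}} \subset \overline{\mathcal{E}}$; and since $\mathcal{E} \in \mathfrak{K}(P)$ we have $\mathcal{E}^\star = \mathcal{E}$, so by induction $\mathcal{E}^{i\star} = \mathcal{E}$ for all $i$ and thus $\overline{\mathcal{E}} = \mathcal{E}$. Combining, $\mathcal{D} = \overline{\mathcal{C} \cup \{D\}} \subset \mathcal{E} \subsetneq \mathcal{D}$, a contradiction. Hence no such $\mathcal{E}$ exists, i.e. there is no member of $\mathfrak{K}(P)$ strictly between $\mathcal{C}$ and $\mathcal{D}$, so $\mathcal{C} \subsetneq \mathcal{D}$ is a covering in $\mathfrak{K}(P)$. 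I do not anticipate a serious obstacle; the only point requiring a little care is the monotonicity step, where one should note that a crown or star of $\mathcal{B}$ that does not come from $\mathcal{A}$ causes no trouble — it only enlarges $\mathcal{B}^\star$ further — so it cannot spoil the inclusion $\mathcal{A}^\star \subset \mathcal{B}^\star$.
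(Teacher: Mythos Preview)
Your proof is correct and follows essentially the same route as the paper: pick $D \in \mathcal{E} \setminus \mathcal{C}$ and run the chain $\mathcal{D} = \overline{\mathcal{C} \cup \{D\}} \subset \overline{\mathcal{E}} = \mathcal{E} \subset \mathcal{D}$. The only difference is that the paper states this directly (concluding $\mathcal{E} = \mathcal{D}$) rather than framing it as a contradiction, and it uses the monotonicity of $\overline{\,\cdot\,}$ without comment, whereas you spell that step out.
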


\begin{proof}
Let $\mathcal{E} \in \mathfrak{K}(P)$ satisfy $\mathcal{C} \subsetneq \mathcal{E} \subset \mathcal{D}$.
Then for any $E \in \mathcal{E} \setminus \mathcal{C}$, we obtain $\mathcal{D}=\overline{\mathcal{C} \cup \{E\}} \subset \overline{\mathcal{E}} = \mathcal{E} \subset \mathcal{D}$.
\end{proof}

\begin{example}\label{ex320}
Consider the poset $P_3$ we mentioned in Example \ref{ex210}.
The followings are the only crowns/stars of $\mathrm{MaxChain}(P_3)$ up to cyclic permutations:
\begin{enumerate}
 \item crowns $(1, 4, 2, 5, 3, 8, 4, 6, 5, 7)$ and $(1, 7, 5, 6, 4, 8, 3, 5, 2, 4)$,
 \item stars $(1, 6, 2, 8)$ and $(1, 8, 2, 6)$,
 \item stars $(2, 6, 3, 7)$ and $(2, 7, 3, 6)$.
\end{enumerate}
Thus a subset $\mathcal{C} \subset \mathrm{MaxChain}(P_3)$ is not an element of $\mathfrak{K}(P_3)$ if and only if $\mathcal{C}$ satisfies one of the followings:
\begin{enumerate}
 \item $\mathcal{C}$ contains only one of the two sets
 $$\{14c, 25d, 38, a46, b57\}, \{24c, 35d, a48, b56, 17\}$$
 for some $a \in \{1, 2\}, b \in \{2, 3\}, c \in \{6, 8\}, d \in \{6, 7\}$,
 \item $\mathcal{C}$ contains only one of the two sets $\{146, 248\}, \{148, 246\}$, 
 \item $\mathcal{C}$ contains only one of the two sets $\{256, 357\}, \{257, 356\}$. 
\end{enumerate}
For instance, $\mathrm{conv}(146, 257, 38)$ is not a face of $\mathscr{M}(P_3)$ since $\{146, 257, 38\}$ satisfies (1) when $a=1, b=2, c=6$ and $d=7$.
Thus $\overline{\{146,257,38\}}$ must contain $\{146, 257, 38, 246, 357, 148, 256, 17\}$, and indeed they are equal.
Therefore, the minimum face containing $\mathrm{conv}(146, 257, 38)$ is $\mathrm{conv}(146, 257, 38, 246, 357, 148, 256, 17)$,
which is a convex hull of the triangle $\mathrm{conv}(146, 257, 38)$ and the $4$-simplex $\mathrm{conv}(246,\allowbreak  357,148, 256, 17)$ meeting at a point $\frac{1}{5}(2, 2, 1, 2, 2, 2, 2, 1)$.

Note that $\{146,148,17\} \in \mathfrak{K}(P_3)$.
Since
$$\overline{\{146,148,17,246\}}=\overline{\{146,148,17,248\}}=\{146,148,17,246,248\},$$
Corollary \ref{cor319} yields $\{146,148,17\} \subset \{146,148,17,246,248\}$ is a covering in $\mathfrak{K}(P_3)$.
Similarly thinking, we obtain the following maximal chain of elements of $\mathfrak{K}(P_3)$:
\begin{multline*}
\emptyset \subset \{146\} \subset \{146, 148\} \subset \{146, 148, 17\} \\
\subset \{146, 148, 17, 246, 248\} \subset \{146, 148, 17, 246, 248, 256\} \\
\subset \{146, 148, 17, 246, 248, 256, 257\} \subset \mathrm{MaxChain}(P_3).
\end{multline*}
Thus the dimension of $\mathscr{M}(P_3)$ is $6$ from Corollary \ref{cor316}.
\end{example}

\begin{theorem}\label{thm321}
 $$\dim \mathscr{M}(\bm{m} \times \bm{n}) =(m-1)(n-1)$$
\end{theorem}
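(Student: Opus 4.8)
The plan is to compute $\dim \mathscr{M}(\bm{m} \times \bm{n}) = \dim \mathrm{conv}(\mathrm{MaxChain}(\bm{m} \times \bm{n}))$ directly as a piece of linear algebra, without invoking the crown machinery. Write the elements of $\bm{m} \times \bm{n}$ as pairs $(i,j)$ with $1 \le i \le m$, $1 \le j \le n$, let $D_t = \{(i,j) \mid i+j = t\}$ be the $t$-th antidiagonal for $t = 2, \dots, m+n$, and let $\phi_t \in \R^{mn}$ be the covector $x \mapsto \sum_{(i,j) \in D_t} x_{ij}$. Every covering relation of $\bm{m} \times \bm{n}$ raises $i+j$ by exactly one, so each maximal chain runs from $(1,1)$ to $(m,n)$ and meets each $D_t$ in exactly one point; hence $\phi_t(e_C) = 1$ for every $C \in \mathrm{MaxChain}(\bm{m} \times \bm{n})$. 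Let $W$ be the space of covectors that are constant on $\{e_C\}$; this is exactly the space of covectors constant on the affine hull of $\{e_C\}$, so $\dim \mathscr{M}(\bm{m}\times\bm{n}) = mn - \dim W$. The $\phi_t$ have pairwise disjoint supports, hence are linearly independent, so $\mathrm{span}\{\phi_2, \dots, \phi_{m+n}\} \subseteq W$ already contributes dimension $m+n-1$; it remains to prove the reverse inclusion $W \subseteq \mathrm{span}\{\phi_t\}$.

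For that, I would show every $\ell \in W$ is constant on each antidiagonal. Fix $1 \le i \le m-1$ and $1 \le j \le n-1$. The saturated chain that climbs inside the grid from $(1,1)$ to $(i,j)$, then passes through $(i,j+1)$ and $(i+1,j+1)$, then climbs to $(m,n)$, is a maximal chain $C$; replacing its middle leg $(i,j) \lessdot (i,j+1) \lessdot (i+1,j+1)$ by $(i,j) \lessdot (i+1,j) \lessdot (i+1,j+1)$ produces a maximal chain $C'$ with $e_C - e_{C'} = e_{(i,j+1)} - e_{(i+1,j)}$. Applying $\ell \in W$ gives $\ell_{i,j+1} = \ell_{i+1,j}$, a relation between the consecutive points $(i,j+1)$ and $(i+1,j)$ of the antidiagonal $D_{i+j+1}$. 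Letting $(i,j)$ range over $\{1,\dots,m-1\} \times \{1,\dots,n-1\}$, these relations link every pair of consecutive points on every antidiagonal $D_t$ with $3 \le t \le m+n-1$ (the short check being that $(a,t-a)$ and $(a+1,t-a-1)$ both lie in the grid exactly when $1 \le a \le m-1$ and $1 \le t-a-1 \le n-1$). Hence $\ell$ is constant on each $D_t$, i.e.\ $\ell \in \mathrm{span}\{\phi_t\}$, so $\dim W = m+n-1$ and $\dim \mathscr{M}(\bm{m} \times \bm{n}) = mn - (m+n-1) = (m-1)(n-1)$.

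I do not anticipate a genuine obstacle: the argument is short once one notices that the antidiagonal covectors furnish both the obvious affine relations among the $e_C$ and, through the ``square-flip'' pairs of maximal chains, all of them. The only things needing (routine) attention are verifying that the square-flip chains really are saturated chains of $\bm{m} \times \bm{n}$ and the index bookkeeping on the antidiagonals. As a fallback I would keep the more combinatorial route of building an explicit chain of length $(m-1)(n-1)+1$ in the face lattice $\mathfrak{K}(\bm{m}\times\bm{n})$, in the spirit of Example \ref{ex320}, certifying the coverings via Corollary \ref{cor319}; but that seems more awkward to organise uniformly in $m$ and $n$.
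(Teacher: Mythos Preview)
Your argument is correct and is genuinely different from the paper's. You compute the affine span of $\{e_C : C \in \mathrm{MaxChain}(\bm m\times\bm n)\}$ directly: the antidiagonal functionals $\phi_t$ give $m+n-1$ independent linear relations, and the square-flip pairs $C,C'$ show that any linear functional constant on the $e_C$ must be constant along each antidiagonal, so these are \emph{all} the relations. The index check that every consecutive pair on every $D_t$ ($3\le t\le m+n-1$) is hit by some square with lower-left corner in $\{1,\dots,m-1\}\times\{1,\dots,n-1\}$ is routine and goes through exactly as you indicate, and $D_2$, $D_{m+n}$ are singletons. The paper instead takes precisely your ``fallback'' route: it builds an explicit maximal chain
\[
\emptyset\subset\mathcal{C}^{(m,1)}\subset\mathcal{C}^{(m-1,2)}\subset\cdots\subset\mathcal{C}^{(1,n)}=\mathrm{MaxChain}(\bm m\times\bm n)
\]
in $\mathfrak K(\bm m\times\bm n)$ of length $(m-1)(n-1)+1$, where $\mathcal{C}^{(x,y)}$ is the set of maximal chains lying in a certain staircase region, and proves each inclusion is a covering via Corollary~\ref{cor319} and a star argument (Lemma~\ref{lem322}). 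Your approach is shorter and self-contained, needing none of the crown machinery; the paper's approach is heavier but yields more, namely an explicit flag of faces of $\mathscr{M}(\bm m\times\bm n)$ and a demonstration that the main theorems actually do the work in a nontrivial example.
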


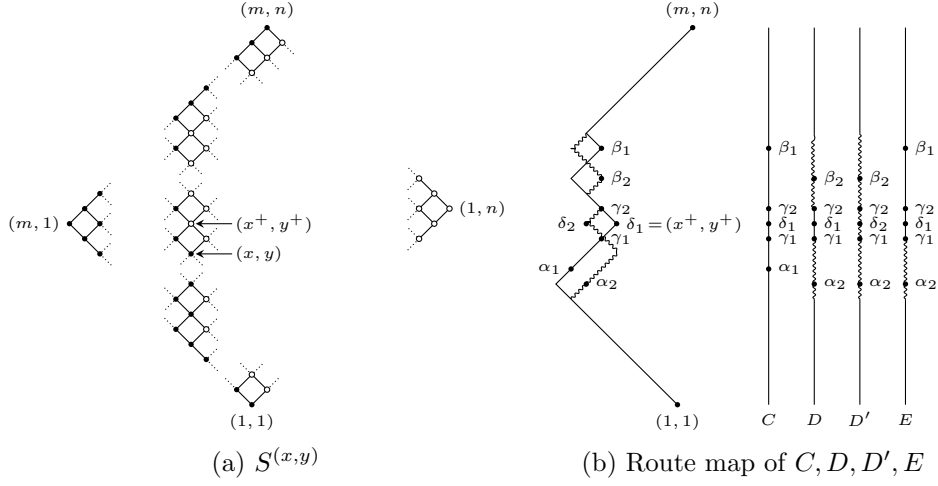
\begin{figure}[t]
\begin{tikzpicture}[scale=0.20]
\draw (0,0)--(-1,1)--(0,2)--(1,1)--cycle;
\draw (-3,3)--(-4,4) (-4,4)--(-5,5)--(-4,6)--(-5,7)--(-4,8)--(-3,7)--(-4,6)--(-3,5)--cycle;
\draw (-4,10)--(-5,11)--(-4,12)--(-5,13)--(-4,14)--(-3,13)--(-4,12)--(-3,11)--cycle;
\draw (-4,20)--(-5,19)--(-4,18)--(-5,17)--(-4,16)--(-3,17)--(-4,18)--(-3,19)--cycle (-4,20)--(-3,21);
\draw (-10,10)--(-11,11) (-11,11)--(-12,12)--(-11,13)--(-10,12)--cycle (-11,13)--(-10,14);
\draw (-1,23)--(0,24)--(1,25)--(2,24)--(1,23)--(0,22)--cycle (0,24)--(1,23);
\draw (11,11)--(12,12) (12,12)--(13,13)--(12,14)--(11,13)--cycle (12,14)--(11,15);
\draw[densely dotted] (1.75,1.75)--(1,1) (-0.75,2.75)--(0,2)--(0.75,2.75) (-1,1)--(-1.75,1.75);
\draw[densely dotted] (-2.25,2.25)--(-3,3)--(-2.25,3.75) (-2.25,4.25)--(-3,5)--(-2.25,5.75) (-2.25,6.25)--(-3,7)--(-2.25,7.75) (-3.25,8.75)--(-4,8)--(-4.75,8.75) (-5.75,7.75)--(-5,7)--(-5.75,6.25) (-5,5)--(-5.75,5.75);
\draw[densely dotted] (-3.25,9.25)--(-4,10)--(-4.75,9.25) (-5.75,10.25)--(-5,11)--(-5.75,11.75) (-5.75,12.25)--(-5,13)--(-5.75,13.75) (-4.75,14.75)--(-4,14)--(-3.25,14.75) (-2.25,12.25)--(-3,13)--(-2.25,13.75) (-2.25,10.25)--(-3,11)--(-2.25,11.75);
\draw[densely dotted] (-4.75,15.25)--(-4,16)--(-3.25,15.25) (-5.75,16.25)--(-5,17)--(-5.75,17.75) (-5.75,18.25)--(-5,19) (-2.25,18.25)--(-3,19)--(-2.25,19.75) (-2.25,16.25)--(-3,17)--(-2.25,17.75) (-2.25,21.75)--(-3,21)--(-2.25,20.25);
\draw[densely dotted] (-0.75,21.25)--(0,22)--(0.75,21.25) (1,23)--(1.75,22.25) (2,24)--(2.75,23.25) (-1.75,22.25)--(-1,23);
\draw[densely dotted] (-9.25,9.25)--(-10,10)--(-9.25,10.75) (-9.25,11.25)--(-10,12)--(-9.25,12.75) (-9.25,13.25)--(-10,14)--(-9.25,14.75);
\draw[densely dotted] (10.25,10.25)--(11,11)--(10.25,11.75) (10.25,12.25)--(11,13)--(10.25,13.75) (10.25,14.25)--(11,15)--(10.25,15.75);
\fill (0,0) circle (5pt) (-1,1) circle (5pt) (-3,3) circle (5pt) (-4,4) circle (5pt) (-5,5) circle (5pt) (-10,10) circle (5pt) (-11,11) circle (5pt) (-12,12) circle (5pt) (-10,12) circle (5pt);
\fill (-11,13) circle (5pt) (-10,14) circle (5pt) (-5,19) circle (5pt) (-4,20) circle (5pt) (-3,21) circle (5pt) (-1,23) circle (5pt) (0,24) circle (5pt) (1,25) circle (5pt);
\fill (-5,7) circle (5pt) (-5,11) circle (5pt) (-5,13) circle (5pt) (-5,17) circle (5pt);
\fill (-4,6) circle (5pt) (-4,8) circle (5pt) (-4,10) circle (5pt);
\draw[fill=white] (0,2) circle (5pt) (1,1) circle (5pt);
\draw[fill=white] (-3,5) circle (5pt) (-3,7) circle (5pt) (-3,11) circle (5pt) (-3,13) circle (5pt) (-3,17) circle (5pt) (-3,19) circle (5pt);
\draw[fill=white] (-4,12) circle (5pt) (-4,14) circle (5pt) (-4,16) circle (5pt) (-4,18) circle (5pt);
\draw[fill=white] (0,22) circle (5pt) (1,23) circle (5pt) (2,24) circle (5pt);
\draw[fill=white] (11,11) circle (5pt) (12,12) circle (5pt) (13,13) circle (5pt) (11,13) circle (5pt) (12,14) circle (5pt) (11,15) circle (5pt);
\draw [stealth-] (-3.7,10) -- (-1.3,10);
\draw [stealth-] (-3.7,12) -- (-1.3,12);
\node at (0,0) [below] {\tiny $(1,1)$};
\node at (-12,12) [left] {\tiny $(m,1)$};
\node at (13,13) [right] {\tiny $(1,n)$};
\node at (1,25) [above] {\tiny $(m,n)$};
\node at (-1.7,10) [right] {\tiny $(x,y)$};
\node at (-1.7,12) [right] {\tiny $(x^+,y^+)$};

\draw[decorate, decoration={snake, amplitude=0.5pt, segment length=2pt}] (21,7)--(24,10)--(22,12)--(23,13) (22,14)--(23,15)--(21,17)--(22,18);
\draw (28,0)--(20,8)--(24,12)--(21,15)--(23,17)--(22,18)--(29,25);
\fill (28,0) circle (5pt) (21,9) circle (5pt) (22,8) circle (5pt) (24,12) circle (5pt) (22,12) circle (5pt) (23,17) circle (5pt) (23,15) circle (5pt) (29,25) circle (5pt) (23,11) circle (5pt) (23,13) circle (5pt);
\node at (28,0) [below] {\tiny $(1,1)$};
\node at (29,25) [above] {\tiny $(m,n)$};
\node at (21,9) [left] {\tiny $\alpha_1$};
\node at (22,8) [right] {\tiny $\alpha_2$};
\node at (23,11) [right] {\tiny $\gamma_1$};
\node at (23,13) [right] {\tiny $\gamma_2$};
\node at (24,12) [right] {\tiny $\delta_1\!=\!(x^+, y^+)$};
\node at (22,12) [left] {\tiny $\delta_2$};
\node at (23,17) [right] {\tiny $\beta_1$};
\node at (23,15) [right] {\tiny $\beta_2$};

\draw (34,0)--(34,25);
\coordinate (A1) at (34,9) node at (A1) [right] {\tiny $\alpha_1$} ;
\coordinate (A2) at (34,12) node at (A2) [right] {\tiny $\delta_1$} ;
\coordinate (A3) at (34,17) node at (A3) [right] {\tiny $\beta_1$} ;
\coordinate (A4) at (34,11) node at (A4) [right] {\tiny $\gamma_1$};
\coordinate (A5) at (34,13) node at (A5) [right] {\tiny $\gamma_2$};
\fill (A1) circle (5pt) (A2) circle (5pt) (A3) circle (5pt) (A4) circle (5pt) (A5) circle (5pt);
\node at (34,0) [below] {\tiny $C$};

\draw (37,0)--(37,7) (37,11)--(37,13) (37,18)--(37,25);
\draw[decorate, decoration={snake, amplitude=0.5pt, segment length=2pt}] (37,7)--(37,11) (37,13)--(37,18);
\coordinate (B1) at (37,8) node at (B1) [right] {\tiny $\alpha_2$} ;
\coordinate (B2) at (37,12) node at (B2) [right] {\tiny $\delta_1$} ;
\coordinate (B3) at (37,15) node at (B3) [right] {\tiny $\beta_2$} ;
\coordinate (B4) at (37,11) node at (B4) [right] {\tiny $\gamma_1$};
\coordinate (B5) at (37,13) node at (B5) [right] {\tiny $\gamma_2$};
\fill (B1) circle (5pt) (B2) circle (5pt) (B3) circle (5pt)  (B4) circle (5pt) (B5) circle (5pt);
\node at (37,0) [below] {\tiny $D$};

\draw (43,0)--(43,7) (43,11)--(43,25);
\draw[decorate, decoration={snake, amplitude=0.5pt, segment length=2pt}] (43,7)--(43,11);
\coordinate (C1) at (43,8) node at (C1) [right] {\tiny $\alpha_2$} ;
\coordinate (C2) at (43,12) node at (C2) [right] {\tiny $\delta_1$} ;
\coordinate (C3) at (43,17) node at (C3) [right] {\tiny $\beta_1$} ;
\coordinate (C4) at (43,11) node at (C4) [right] {\tiny $\gamma_1$};
\coordinate (C5) at (43,13) node at (C5) [right] {\tiny $\gamma_2$};
\fill (C1) circle (5pt) (C2) circle (5pt) (C3) circle (5pt) (C4) circle (5pt) (C5) circle (5pt);
\node at (43,0) [below] {\tiny $E$};

\draw (40,0)--(40,7) (40,18)--(40,25);
\draw[decorate, decoration={snake, amplitude=0.5pt, segment length=2pt}] (40,7)--(40,18);
\coordinate (D1) at (40,8) node at (D1) [right] {\tiny $\alpha_2$} ;
\coordinate (D2) at (40,12) node at (D2) [right] {\tiny $\delta_2$} ;
\coordinate (D3) at (40,15) node at (D3) [right] {\tiny $\beta_2$} ;
\coordinate (D4) at (40,11) node at (D4) [right] {\tiny $\gamma_1$};
\coordinate (D5) at (40,13) node at (D5) [right] {\tiny $\gamma_2$};
\fill (D1) circle (5pt) (D2) circle (5pt) (D3) circle (5pt) (D4) circle (5pt) (D5) circle (5pt);
\node at (40,0.15) [below] {\tiny $D'$};
\end{tikzpicture}
\hspace*{2.5cm}(a) $S^{(x,y)}$ \hspace*{3.2cm} (b) Route map of $C, D, D', E$
\caption{$\bm{m} \times \bm{n}$ and maximal chains}
\label{fig6}
\end{figure}

\begin{proof}

Since $\#\mathrm{MaxChain}(\bm{m} \times \bm{n})=\binom{m+n-2}{m-1}$,
the theorem holds if $n$ or $m$ is equal to or less than $2$; because in this case, $\bm{m} \times \bm{n}$ does not have a crown structure.
Moreover, we have already seen in Example \ref{ex37} that the theorem holds for $m=n=3$.

Assume that both $m$ and $n$ are greater than or equal to $3$.
In what follows, see Figure \ref{fig6} if necessary.
Put $S^{(m, 1)}=\{(s, t) \in \bm{m} \times \bm{n} \mid t=1 \ \text{or}\ s=m \}$,
and for $(x, y) \in \{1, \dots, m-1\} \times \{2, \dots, n\}$, define
$$S^{(x, y)}=\{(s, t) \in \bm{m} \times \bm{n} \mid t-s < y-x, \ \text{or} \ t-s=y-x \ \text{and} \ t \leq y \} \cup S^{(m, 1)}$$
(the black points in Figure \ref{fig6}(a) are the elements of $S^{(x,y)}$).
Let $\mathcal{C}^{(x, y)}$ be the subset of $\mathrm{MaxChain}(\bm{m} \times \bm{n})$ consisting of elements of $S^{(x, y)}$, i.e.
$\mathcal{C}^{(x, y)}=\mathrm{MaxChain}(\bm{m} \times \bm{n}) \cap 2^{S^{(x, y)}}$.
Then $\mathcal{C}^{(x, y)}$ is an element of $\mathfrak{K}(\bm{m} \times \bm{n})$,
and especially, $\mathcal{C}^{(1, n)}=\mathrm{MaxChain}(\bm{m} \times \bm{n})$.

For $(x, y) \in ( \{1, \dots, m-1\} \times \{2, \dots, n\} ) \setminus \{(1, n)\}$,
we define
$$(x^+, y^+)=
   \begin{cases} (x - y+1,2) & \text{if} \ x = m-1 \ \text{or}\ y=n, \ \text{and} \ x > y, \\
                      (1, y-x+2) & \text{if} \ x = m-1 \ \text{or}\ y=n, \ \text{and} \ x \leq y, \\
                      (x+1, y+1) & \text{otherwise.}
   \end{cases}
$$
Then clearly $S^{(x, y)} \subsetneq S^{(x^+, y^+)}$ and $\mathcal{C}^{(x, y)} \subsetneq \mathcal{C}^{(x^+, y^+)}$ holds.

\begin{lemma}\label{lem322}
 $\mathcal{C}^{(x, y)} \subset \mathcal{C}^{(x^+, y^+)}$ is a covering in $\mathfrak{K}(\bm{m}\times\bm{n})$.
\end{lemma}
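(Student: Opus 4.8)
The plan is to apply Corollary \ref{cor319}. We are already given that $\emptyset \neq \mathcal{C}^{(x,y)} \subsetneq \mathcal{C}^{(x^+,y^+)}$ and that both sets lie in $\mathfrak{K}(\bm{m}\times\bm{n})$, so it suffices to show
$$\overline{\mathcal{C}^{(x,y)}\cup\{D\}}=\mathcal{C}^{(x^+,y^+)}\qquad\text{for every }D\in\mathcal{C}^{(x^+,y^+)}\setminus\mathcal{C}^{(x,y)}.$$
The inclusion ``$\subseteq$'' is free: $\mathcal{C}^{(x^+,y^+)}$ is an element of $\mathfrak{K}$ containing $\mathcal{C}^{(x,y)}\cup\{D\}$, and $\overline{\mathcal{C}^{(x,y)}\cup\{D\}}$ is by construction the smallest element of $\mathfrak{K}(\bm{m}\times\bm{n})$ with that property. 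Thus the whole content is the reverse inclusion, and I would fix an arbitrary $D$ and prove it.

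First I would pin down the local picture. Comparing the formulas for $S^{(x,y)}$ and $(x^+,y^+)$ branch by branch shows that in every case $S^{(x^+,y^+)}=S^{(x,y)}\cup\{p\}$ for a single new point $p=(a,b)$, and that $p$ sits at a ``northeast corner'': $(a-1,b)$ and $(a,b+1)$ are \emph{not} in $S^{(x^+,y^+)}$, whereas $q_-:=(a,b-1)$, $q_+:=(a+1,b)$ and $p':=(a+1,b-1)$ all lie in $S^{(x,y)}$. Hence every maximal chain of $S^{(x^+,y^+)}$ through $p$ contains the corner $q_-\lessdot p\lessdot q_+$, and replacing that corner by $q_-\lessdot p'\lessdot q_+$ turns it into a maximal chain of $S^{(x,y)}$. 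So $\mathcal{C}^{(x^+,y^+)}\setminus\mathcal{C}^{(x,y)}$ is exactly the set of chains $D_{L,U}:=L\cup\{p\}\cup U$, where $L$ ranges over the maximal chains of $S^{(x,y)}$ from $(1,1)$ to $q_-$ and $U$ over those from $q_+$ to $(m,n)$; moreover for each admissible pair $(L,U)$ the chain $C_{L,U}:=L\cup\{p'\}\cup U$ belongs to $\mathcal{C}^{(x,y)}$. (If $q_-=(1,1)$ the part $L$ is unique, and if $q_+=(m,n)$ the part $U$ is unique; since $m,n\ge 3$ at most one of these degenerations occurs.)

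The generation step uses two rounds of star completions. I would first note that $\bm{m}\times\bm{n}$ has no guided crown at all --- chasing the relations $\alpha_i\lessdot\beta_i$, $\alpha_i\lessdot\beta_{i-1}$ forces the $\alpha_i$ to march strictly monotonically along one anti-diagonal, so they cannot close into a cycle --- hence the remark following Remark \ref{rem27} applies to any $\mathcal{E}\in\mathfrak{K}(\bm{m}\times\bm{n})$: if $C_1,C_2\in\mathcal{E}$ and $C_1\setminus C_2$ is disconnected in the Hasse diagram, then both crossovers $(-\infty,\gamma]_{C_1}\cup(\gamma,\infty)_{C_2}$ and $(-\infty,\gamma]_{C_2}\cup(\gamma,\infty)_{C_1}$ lie in $\mathcal{E}$ for every $\gamma\in C_1\cap C_2$. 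Now set $\mathcal{E}=\overline{\mathcal{C}^{(x,y)}\cup\{D\}}$ and write the given $D$ as $D_{L_0,U_0}$. For each admissible $L\neq L_0$, the pair $\{D_{L_0,U_0},\,C_{L,U_0}\}\subseteq\mathcal{E}$ has disconnected difference --- it is the disjoint union of the isolated vertex $p$ (its chain-neighbours $q_-,q_+$ both lie on $C_{L,U_0}$) and the nonempty part of $L_0$ strictly below $q_-$ --- so the crossover at $\gamma=q_-$ puts $D_{L,U_0}=L\cup\{p\}\cup U_0$ into $\mathcal{E}$. Carrying this out over all $L$, and then crossing each $D_{L,U_0}$ with $C_{L,U}\in\mathcal{C}^{(x,y)}$ at $\gamma=q_+$ over all admissible $U$, yields $D_{L,U}\in\mathcal{E}$ for every admissible pair. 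Hence $\mathcal{C}^{(x^+,y^+)}\setminus\mathcal{C}^{(x,y)}\subseteq\mathcal{E}$, and since $\mathcal{C}^{(x,y)}\subseteq\mathcal{E}$ as well, we get $\mathcal{C}^{(x^+,y^+)}\subseteq\mathcal{E}$ --- the reverse inclusion. (A round in which $L$, resp.\ $U$, is already unique is simply vacuous.) By Corollary \ref{cor319}, $\mathcal{C}^{(x,y)}\subset\mathcal{C}^{(x^+,y^+)}$ is a covering.

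The step I expect to cost the most care is the first, bookkeeping one: verifying \emph{uniformly} across the three branches of the definition of $(x^+,y^+)$ that the new point $p$ is a northeast corner, that $q_-,q_+,p'\in S^{(x,y)}$, and that $(a-1,b)\notin S^{(x^+,y^+)}$, so that the identification of $\mathcal{C}^{(x^+,y^+)}\setminus\mathcal{C}^{(x,y)}$ with $\{D_{L,U}\}$ (and its product structure over pairs $(L,U)$) is correct. This is elementary arithmetic with the inequalities $t-s\lessgtr y-x$, but the two boundary branches --- where $(x^+,y^+)$ jumps to the foot of the next anti-diagonal --- must be checked separately from the generic branch $(x^+,y^+)=(x+1,y+1)$. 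Once this is in place the star-completion argument is short.
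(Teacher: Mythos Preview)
Your proof is correct and follows essentially the same route as the paper's: both reduce to Corollary~\ref{cor319}, identify the single new point $\delta_1=(x^+,y^+)$ (your $p$), and reach every chain in $\mathcal{C}^{(x^+,y^+)}\setminus\mathcal{C}^{(x,y)}$ via two successive star completions---one swapping the lower segment below $\gamma_1=q_-$, one swapping the upper segment above $\gamma_2=q_+$. Your exposition is marginally more streamlined in that you parametrize the new chains as $D_{L,U}$ up front and invoke the no-crown observation together with the star-guide remark after Remark~\ref{rem27}, whereas the paper exhibits explicit stars $(\alpha_1,\delta_1,\alpha_2,\delta_2)$ and $(\delta_1,\beta_1,\delta_2,\beta_2)$; but the underlying mechanism is identical.
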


\begin{proof}
Fix $C \in \mathcal{C}^{(x^+, y^+)} \setminus \mathcal{C}^{(x, y)}$.
We will prove that $\overline{\mathcal{C}^{(x,y)} \cup \{C\}}=\mathcal{C}^{(x^+,y^+)}$.

 If $\#\left(\mathcal{C}^{(x^+, y^+)} \setminus \mathcal{C}^{(x, y)}\right)=1$, then obviously the lemma holds.
Thus suppose there exists $D \in \mathcal{C}^{(x^+, y^+)} \setminus \left(\mathcal{C}^{(x, y)} \cup \{C\}\right)$.
Put $\delta_1={(x^+, y^+)}, \delta_2=\{x^+ +1, y^+ -1\}$.
Note that $C, D$ both contain $\delta_1$ because $S^{(x^+, y^+)} \setminus S^{(x, y)}=\{\delta_1\}$.
Therefore, $\gamma_1:=(x^+, y^+ -1) \lessdot \delta_1 \lessdot \gamma_2:=(x^+ +1, y^+)$ is a covering sequence in both $C$ and $D$.
Put $D'=D \cup \{ \delta_2\}\setminus \{ \delta_1\}$; then $D' \in \mathcal{C}^{(x, y)}$.

Let $E=(-\infty, \gamma_1]_{D'} \cup (\gamma_1, \infty)_C$.
We first prove that $E$ is an element of $\overline{\mathcal{C}^{(x,y)} \cup \{C\}}$.
If $(-\infty,\gamma_1]_C=(-\infty, \gamma_1]_D$, then we have $E=C \in \overline{\mathcal{C}^{(x,y)} \cup \{C\}}$.
Otherwise, there exists elements 
$\alpha_1 \in (-\infty, \gamma_1]_C \setminus E$ and $\alpha_2 \in (-\infty, \gamma_1]_E \setminus C$.
Since
$$C \in \left(\mathcal{C}^{(x,y)} \cup \{C\}\right)_{\alpha_1 \delta_1}, D' \in \left(\mathcal{C}^{(x,y)} \cup \{C\}\right)_{\alpha_2 \delta_2}, \ \gamma_1 \in (\alpha_1, \delta_1)_C \cap (\alpha_2, \delta_2)_{D'},$$
$(\alpha_1, \delta_1, \alpha_2, \delta_2)$ is a star of $\mathcal{C}^{(x,y)} \cup \{C\}$.
Therefore, we obtain $E \in \overline{\mathcal{C}^{(x, y)} \cup \{C\}}$.

Secondly, we will prove $D \in \overline{\mathcal{C}^{(x,y)} \cup \{C\}}$ similarly.
Note that $E=(-\infty, \gamma_2]_{D} \cup (\gamma_2, \infty)_C$.
Thus if $(\gamma_2, \infty)_D=(\gamma_2, \infty)_E$, we have $D=E \in \overline{\mathcal{C}^{(x, y)} \cup \{C\}}$.
Otherwise, there exists elements
$\beta_1 \in (\gamma_2, \infty)_E \setminus D$ and $\beta_2 \in (\gamma_2, \infty)_D \setminus E.$
Since
$$E \in \left(\mathcal{C}^{(x,y)} \cup \{C\}\right)_{\delta_1 \beta_1}, D' \in \left(\mathcal{C}^{(x,y)} \cup \{C\}\right)_{\delta_2 \beta_2}, \ \gamma_2 \in (\delta_1, \beta_1)_E \cap (\delta_2, \beta_2)_{D'},$$
$(\delta_1, \beta_1, \delta_2, \beta_2)$ is a star of $\mathcal{C}^{(x,y)} \cup \{C\}$.
Thus, we have
$$D=(-\infty, \gamma_2]_{E} \cup (\gamma_2, \infty )_{D'} \in \overline{\mathcal{C}^{(x,y)} \cup \{C\}}.$$ 

Therefore, every element $D$ of $\mathcal{C}^{(x^+, y^+)}$ is also an element of $\overline{\mathcal{C}^{(x,y)} \cup \{C\}}$, i.e.
$\mathcal{C}^{(x^+, y^+)} \subset \overline{\mathcal{C}^{(x,y)} \cup \{C\}} \subset \overline{\mathcal{C}^{(x^+, y^+)}}$.
Since $\overline{\mathcal{C}^{(x^+, y^+)}}=\mathcal{C}^{(x^+, y^+)}$, we have $\mathcal{C}^{(x^+, y^+)} = \overline{\mathcal{C}^{(x,y)} \cup \{C\}}$, 
and the lemma follows from Corollary \ref{cor319}.
\end{proof}

\it{Proof of Theorem \ref{thm321} continued.}\
Note that $\#\mathcal{C}^{(m,1)}=1$ and $\#\mathcal{C}^{(m-1,2)}=2$.
Thus, Lemma \ref{lem322} yields that the inclusion sequence
\begin{multline*}
\emptyset \subset \mathcal{C}^{(m, 1)} \subset \mathcal{C}^{(m-1, 2)} 
    \subset \mathcal{C}^{(m-2, 2)} \subset \mathcal{C}^{(m-1, 3)}  \subset \mathcal{C}^{(m-3, 2)} \subset \dots\\
     \subset \mathcal{C}^{(x, y)} \subset \mathcal{C}^{(x^+, y^+)} \subset \dots 
    \subset \mathcal{C}^{(2, n)} \subset \mathcal{C}^{(1, n)}=\mathrm{MaxChain}(\bm{m} \times \bm{n})
\end{multline*}
is a maximal chain of $\mathfrak{K}(\bm{m} \times \bm{n})$.
Thus, the theorem follows from Corollary \ref{cor316}.
\end{proof}

\section*{Acknowledgement}
The author would like to thank Masanori Kobayashi for numerous helpful discussions.

\bibliographystyle{plain}
\nocite{*} 
\bibliography{crown20210824}

\end{document}